\newcommand{\A}{\mathbf{A}}
\newcommand{\C}{\mathbf{C}}
\newcommand{\F}{\mathbf{F}}
\newcommand{\G}{\mathbb{G}}
\renewcommand{\L}{\mathbb{L}}
\newcommand{\sH}{\mathcal{H}}
\newcommand{\N}{\mathbf{N}}
\renewcommand{\P}{\mathbf{P}}
\newcommand{\Q}{\mathbf{Q}}
\newcommand{\sD}{\mathcal{D}}
\newcommand{\sK}{\mathcal{K}}
\newcommand{\sO}{\mathcal{O}}
\newcommand{\sR}{\mathcal{R}}
\newcommand{\sV}{\mathcal{V}}
\newcommand{\sX}{\mathcal{X}}
\newcommand{\bV}{\mathbb{V}}
\newcommand{\Z}{\mathbf{Z}}
\newcommand{\un}{\mathbf{1}}
\newcommand{\Br}{\operatorname{Br}}
\newcommand{\Pic}{\operatorname{Pic}}
\newcommand{\NS}{\operatorname{NS}}
\newcommand{\Chow}{\operatorname{\mathbf{Chow}}}
\newcommand{\DM}{\operatorname{\mathbf{DM}}}
\newcommand{\Spec}{\operatorname{Spec}}
\newcommand{\Ker}{\operatorname{Ker}}
\newcommand{\Coker}{\operatorname{Coker}}
\newcommand{\IM}{\operatorname{Im}}
\newcommand{\Ab}{\operatorname{\mathbf{Ab}}}
\newcommand{\Cor}{\operatorname{\mathbf{Cor}}}
\newcommand{\Hom}{\operatorname{Hom}}
\newcommand{\Sm}{\operatorname{Sm}}
\newcommand{\Reg}{\operatorname{Reg}}
\newcommand{\cd}{\operatorname{cd}}
\newcommand{\ud}{\operatorname{ud}}
\newcommand{\car}{\operatorname{char}}
\newcommand{\trdeg}{\operatorname{trdeg}}
\newcommand{\cont}{{\operatorname{cont}}}
\newcommand{\tors}{{\operatorname{tors}}}
\newcommand{\nr}{{\operatorname{nr}}}
\newcommand{\eff}{{\operatorname{eff}}}
\newcommand{\gm}{{\operatorname{gm}}}
\newcommand{\num}{{\operatorname{num}}}
\newcommand{\cl}{{\operatorname{cl}}}
\newcommand{\et}{{\operatorname{\acute{e}t}}}
\renewcommand{\lim}{\varprojlim}
\newcommand{\colim}{\varinjlim}
\newcommand{\by}{\xrightarrow}
\newcommand{\yb}{\xleftarrow}
\newcommand{\iso}{\by{\sim}}
\newcommand{\inj}{\hookrightarrow}
\newcommand{\surj}{\rightarrow\!\!\!\!\!\rightarrow}
\newcommand{\Surj}{\relbar\joinrel\surj} 
\newcommand{\Tate}{{\operatorname{tate}}}
\renewcommand{\div}{\operatorname{div}}
\newcounter{spec}
\newenvironment{thlist}{\begin{list}{\rm{(\roman{spec})}}%
{\usecounter{spec}\labelwidth=20pt\itemindent=0pt\labelsep=10pt}}%
{\end{list}}
\newtheorem{thm}{Theorem}[section]
\newtheorem{lemma}[thm]{Lemma}
\newtheorem{prop}[thm]{Proposition}
\newtheorem{cor}[thm]{Corollary}
\theoremstyle{remark}
\newtheorem{rque}[thm]{Remark}
\newtheorem{rques}[thm]{Remarks}
\newtheorem{qn}[thm]{Question}
\numberwithin{equation}{section}
\begin{document}

\title[Injectivity of cycle class maps]{On the injectivity and non-injectivity of the $l$-adic cycle class maps}
\author{Bruno Kahn}
\address{CNRS, Sorbonne Université and Université Paris Cité, IMJ-PRG\\ Case 247\\4 place
Jussieu\\75252 Paris Cedex 05\\France}
\email{bruno.kahn@imj-prg.fr}
\begin{abstract}
We study the injectivity of the cycle class map with values in Jannsen's continuous étale cohomology, by using refinements that go through étale motivic cohomology and the ``tame'' version of Jannsen's cohomology. In particular, we use this to show that the Tate and the Beilinson conjectures imply that its kernel is torsion in positive characteristic, and to revisit recent counterexamples to injectivity.
\end{abstract}
\date{
March 10, 2024}
\subjclass[2020]{14C25, 14F20}
\keywords{l-adic cohomology, cycle class map}
\maketitle

\tableofcontents

\section{Introduction}

\subsection{The cycle class map} Recently there has been renewed investigation of Jannsen's cycle class map \cite[(6.14)]{jann}
\begin{equation}\label{eq0}
CH^n(X)\otimes \Z_l\by{\cl^n_X} H^{2n}_\cont(X,\Z_l(n))
\end{equation}
for $X$ a smooth projective variety over a field $k$. Jannsen introduced continuous étale cohomology in loc. cit. in order to correct bad properties of the naïve definition (by inverse limits) of $l$-adic cohomology when $k$ is not separably closed. His definition agrees with the naïve one when the Galois cohomology groups of $k$ with finite coefficients are finite, which happens only for special fields (finite, $p$-adic, separably closed\dots) and not for any infinite finitely generated field.

Finitely generated fields are those for which \eqref{eq0} is especially interesting: Jannsen then proves that it is injective for $n=1$ (loc. cit., Rem. 6.15 a)), and raises the question of such injectivity for higher values of $n$, at least rationally; in \cite[Lemma 2.7]{jann3} he shows that such injectivity would imply the Bloch-Beilinson--Murre (BBM) conjectures on filtrations on (rational) Chow groups. 

One may wonder whether the converse is true: I could not prove it even with the strongest form of the BBM conjectures (existence of a category of mixed motives, \cite[\S 4]{jann3}). One may then wonder whether the injectivity of $\cl^n_X\otimes \Q$ would follow from some other known conjectures. We shall see in in Theorem \ref{p1} c) that the answer is yes in positive characteristic, up to refining $\cl^n_X$ as in \S \ref{s1.2}; to the best of my knowledge, this question remains open in characteristic $0$. See Section \ref{s1} for elementary computations showing that the Bass conjecture is not sufficient, and \S \ref{s5.7}.

Given such an unclear situation, one may wonder whether  the restriction of \eqref{eq0} to the torsion subgroup of $CH^n(X)\otimes \Z_l$ is injective; this has been the topic of \cite[Theorem]{saito}  (resp. \cite{sc-suz,AS22,ct-sc}), where many examples (resp. counterexamples) have been found. 

\subsection{Refining $\cl^n_X$} \label{s1.2} The two ideas developed in this paper are the following:

1) \eqref{eq0} is defined generally for smooth, not necessarily projective, varieties, and this extension can be useful even to study the projective case: it was the central tool in \cite{tatediv} for a simple reduction of the Tate conjecture in codimension $1$ to the case of surfaces over the prime field, and will be used similarly for the proof of Theorem \ref{p1}.

2) As exploited previously in \cite{cycletale}, \eqref{eq0} gains in understanding if it is factored through finer cycle class maps, namely
\begin{multline}\label{eq0bis}
CH^n(X)\otimes \Z_l\by{\alpha^n_X} H^{2n}_\et(X,\Z(n))\otimes \Z_l \\
\by{\beta^n_X}  \tilde H^{2n}_\cont(X,\Z_l(n))\by{\gamma^n_X} H^{2n}_\cont(X,\Z_l(n))
\end{multline}
where $H^{i}_\et(X,\Z(j))$ is étale motivic cohomology and 
\begin{equation}\label{eq1.1}
\tilde H^i_\cont(X,\Z_l(j))=\colim\nolimits_\sX H^i_\cont(\sX,\Z_l(j))
\end{equation} 
 is Jannsen's refinement of $H^i_\cont(X,\Z_l(j))$ from \cite[11.6, 11.7]{jann2}; here $\sX$ runs through models  of $X$ regular, separated and of finite type over $\Z[1/l]$. This group maps to $H^i_\cont(X,\Z_l(j))$, but not isomorphically in general since continuous étale cohomology does not commute with filtering limits of schemes: for example, $\tilde H^1_\cont(\C,\Z_l(1))\simeq \C^*\otimes \Z_l$ while $H^1_\cont(\C,\Z_l(1))=0$; for $l\ne 2$, $\tilde H^2_\cont(\Q,\Z_l(1))\simeq \bigoplus_p \Z_l$ while $H^2_\cont(\Q,\Z_l(1))$ is the $l$-adic completion of $\bigoplus_p \Z$.

The extension of \eqref{eq0} to étale motivic cohomology, i.e. the map $\gamma^n_X\beta^n_X$ of \eqref{eq0bis}, was initially constructed in \cite{glr} using Bloch's cycle complexes; in positive characteristic, this is sufficient to refine it to the middle map $\beta^n_X$, because the models are then defined over a (finite) field. 

In characteristic $0$, the situation would the same if one did not care about the groups $\tilde H^i_\cont(X,\Z_l(n))$, but is more delicate otherwise because of the $\Z$-models $\sX$. A construction of $\cl^n_\sX$ is essentially done in \cite[\S 5]{saito}; namely, Saito defines cycle classes in étale cohomology with finite coefficients, but one can then promote them to continuous étale cohomology as in \cite[Th. 3.23]{jann}; to prove that they pass to rational equivalence, the use of $\P^1$ in the proof of \cite[Lemma 6.14 i)]{jann} can be advantageously replaced by that of $\A^1$, to exploit the $\A^1$-invariance of étale cohomology. 
To define $\beta^n_X$, we need to  use étale motivic cohomology of arithmetic schemes: this is done in Section \ref{s2.1}.

In the sequel, it will be convenient to use the abbreviations 
\begin{equation}\label{eq1.2}
CH^n_\et(X)=H^{2n}_\et(X,\Z(n)), \quad \tilde \cl^n_X =\beta^n_X\alpha^n_X.
\end{equation}

The groups $CH^n_\et(X)$ are in general very amenable to computation, which can be used to simplify and clarify reasonings which did not involve them; we hope that this technique will be taken up by others in future work. 

The factorisation \eqref{eq0bis} also indicates facts on $\cl^n_X$. For example, as an immediate consequence we see that it can be injective on torsion only if $\alpha^n_X$ is. This holds for $n\le 2$ thanks to the short exact sequence \cite[Prop. 2.9]{cycletale}
 \begin{equation}\label{eq9}
 0\to CH^2(X)\by{\alpha^2_X} CH^2_\et(X)\to H^3_\nr(X,\Q/\Z(2))\to 0
 \end{equation}
but not for $n>2$ in general, e.g. \cite[p. 998]{cell}. We shall see in \S \ref{s4} that the counterexamples obtained for $n>2$ are all explained in this way. 

On the other hand, one should not make too much of $CH^n_\et(X)$ beyond being a convenient computational tool: for example, it is far from being finitely generated in general, see Proposition \ref{p3}. Regarding $\tilde H^{2n}_\cont(X,\Z_l(n))$, \eqref{eq0bis} raises the following question:

\begin{qn}[cf. Th. \ref{p1}] Does there exist a finitely generated field $k$, a smooth projective $k$-variety $X$ and an integer $n\ge 0$ such that $\Ker \tilde\cl^n_X\otimes \Q\ne \Ker \cl^n_X\otimes \Q$?
\end{qn}

Another thing which can be done is to find ``obvious'' cases where $\Ker \cl^2_X$ is finite or $0$, by reduction to the case $n=1$. Morally, this might happen when $X$ has a decomposition of the diagonal à la Bloch-Srinivas, because the reduced motive of $X$ has then coniveau $>0$, hence $CH^n(X)$ is approximately $CH^{n-1}(Y)$ for some other smooth projective $Y$, and similarly for continuous étale cohomology. Nevertheless, working out the argument turns out to involve the size of the ground field $k$: see Theorem \ref{t3} and Remark \ref{r1} b).

\subsection{Contents} In Section \ref{stri} we prove some basic facts, the most notable being Theorem \ref{t0}. In Section \ref{s2.1}, we extend \eqref{eq0bis} to other degrees by using motivic cohomology à la Bloch-Levine (see  \eqref{eq00}); the main result is Theorem \ref{t2.1}. In Section \ref{s1} we analyse an attempt to deduce the injectivity of $\cl^n_X\otimes \Q$ over finitely generated fields from the Bass conjecture, showing how it fails. In Section \ref{s2}, we explain that the situation is much better in positive characteristic. In Section \ref{s3}, we prove some results when $X$ has a decomposition of the diagonal: this originates from a letter to Colliot-Thélène of Dec. 1, 2021. 

In Sections \ref{s4}, \ref{s4.4} and \ref{s4.5}, we revisit the counterexamples from \cite{sc-suz}, \cite{AS22} and \cite{ct-sc}. Section \ref{s4} concerns those where $n>2$: we show that they are all explained by non-injectivity of $\alpha^n_X$, so have nothing to do with continuous étale cohomology. In particular, the counterexamples of Alexandrou and Schreieder in \cite{AS22} are proven without using refined Bloch maps. 

The next two sections concern the case $n=2$. In Section \ref{s4.4}, we refine a counterexample of Scavia and Suzuki \cite{sc-suz} involving the Rost motive; in Section \ref{s4.5} we reformulate parts of the paper of Colliot-Thélène and Scavia \cite{ct-sc} in a more concise way. Still, Section \ref{s4} takes one page, Section \ref{s4.4} 1 1/2 pages and Section \ref{s4.5} 4 1/2 pages.

\subsection{Notation} We write $\bV(k)$ for the category of smooth projective varieties over a field $k$, and $\Reg(S)$ (resp. $\Sm(S)$) for the category of regular (resp. smooth) separated schemes of finite type over a base scheme $S$. An \emph{arithmetic scheme} is a connected object of $\Reg(\Spec \Z)$ which is either smooth or not flat over $\Spec \Z$ (hence in the latter case, smooth over a finite field). We recall the following basic fact \cite[II 7.1]{milned}:

\begin{thm}\label{t1.1} Let $\sX$ be an arithmetic scheme. If $F$ is a constructible sheaf on $\sX$ such that $mF=0$ for some integer $m>0$ which is invertible on $\sX$, then the étale cohomology groups $H^i(\sX,F)$ are finite.\qed
\end{thm}

\subsection{Acknowledgements} I thank Marc Levine for his help in the proof of Lemma \ref{l3.1}.

\enlargethispage*{20pt}

\section{Some general facts}\label{stri}

\subsection{Algebra}

\begin{lemma}\label{l3} Let $0\to A\to B\to C\to 0$ be an exact sequence of abelian groups. Assume that the inverse systems $({}_{l^\nu} A)$ and $({}_{l^\nu} B)$ are Mittag-Leffler. Then there is an exact sequence
\[0\to T_l A\to T_l B\to T_l C \to \hat A\to \hat B\]
where $\hat{}$ means $l$-adic completion.
\end{lemma}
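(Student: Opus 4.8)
The plan is to apply the snake lemma to the short exact sequences of multiplication by $l^\nu$. For each $\nu$, write the exact sequence
\[0\to {}_{l^\nu}A\to {}_{l^\nu}B\to {}_{l^\nu}C\to A/l^\nu\to B/l^\nu\to C/l^\nu\to 0,\]
obtained by applying the snake lemma to the commutative diagram with rows $0\to A\to B\to C\to 0$ and vertical maps multiplication by $l^\nu$. This is a short exact sequence of inverse systems (indexed by $\nu$, with transition maps multiplication by $l$ on torsion and the obvious projections on quotients). Taking inverse limits over $\nu$ gives, by definition, $T_lA\to T_lB\to T_lC\to \hat A\to \hat B\to \hat C$; the issue is exactness, since $\lim$ is only left exact and one needs $\lim^1$ to vanish where it matters.

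First I would split the six-term exact sequence into two short exact sequences of inverse systems, say $0\to ({}_{l^\nu}A)\to ({}_{l^\nu}B)\to (D_\nu)\to 0$ where $D_\nu=\IM({}_{l^\nu}B\to {}_{l^\nu}C)$, and $0\to (D_\nu)\to ({}_{l^\nu}C)\to (E_\nu)\to 0$ where $E_\nu=\Ker(A/l^\nu\to B/l^\nu)$, followed by $0\to (E_\nu)\to (A/l^\nu)\to (\IM)\to 0$. The Mittag-Leffler hypothesis on $({}_{l^\nu}A)$ gives $\lim^1 {}_{l^\nu}A=0$, so the first limit sequence $0\to T_lA\to T_lB\to \lim D_\nu\to 0$ is exact. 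Since $(D_\nu)$ is a sub-system of the Mittag-Leffler system $({}_{l^\nu}C)$... actually more simply, $(D_\nu)$ is a quotient of the Mittag-Leffler system $({}_{l^\nu}B)$ — the image of a Mittag-Leffler system under a morphism is Mittag-Leffler — hence $\lim^1 D_\nu=0$ too, and the second sequence gives $0\to \lim D_\nu\to T_lC\to \lim E_\nu\to 0$ exact. Splicing, we get exactness of $T_lA\to T_lB\to T_lC$ with cokernel $\lim E_\nu$, and it remains to identify $\lim E_\nu$ with $\Ker(\hat A\to \hat B)$. For that, apply $\lim$ to $0\to (E_\nu)\to (A/l^\nu)\to (\IM(A/l^\nu\to B/l^\nu))\to 0$: since $(E_\nu)$ is a sub-system of $(A/l^\nu)$ whose transition maps are surjective (the systems $(A/l^\nu)$ have surjective transition maps), one checks $(E_\nu)$ is Mittag-Leffler, so $\lim^1 E_\nu=0$ and $0\to \lim E_\nu\to \hat A\to \lim \IM(A/l^\nu\to B/l^\nu)\to 0$ is exact; finally $\lim \IM(A/l^\nu\to B/l^\nu)\inj \hat B$ because it is the limit of a system of subgroups with surjective transitions inside $(B/l^\nu)$, giving $\lim E_\nu=\Ker(\hat A\to\hat B)$.

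The main obstacle is bookkeeping the various Mittag-Leffler conditions cleanly: one must check that each auxiliary system appearing ($D_\nu$, $E_\nu$, and the images in $B/l^\nu$) inherits the Mittag-Leffler property, using the two standard facts that quotients of Mittag-Leffler systems are Mittag-Leffler and that a system with eventually-surjective, or more generally stably-equal-image, transition maps is Mittag-Leffler. One subtlety worth flagging: the hypothesis is on $({}_{l^\nu}A)$ and $({}_{l^\nu}B)$ with transition maps multiplication by $l$, whereas the systems $(A/l^\nu)$ always have surjective transition maps and so are automatically Mittag-Leffler — so no hypothesis is needed on the quotient side. Once the ML conditions are in place, every $\lim^1$ that could obstruct exactness vanishes, and the six-term sequence descends to the claimed five-term exact sequence $0\to T_lA\to T_lB\to T_lC\to \hat A\to \hat B$.
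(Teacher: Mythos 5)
Your argument is correct and reaches the stated exact sequence, but it takes a genuinely different route from the paper. The paper works with the derived functors $T_l^i$ of $T_l=\varprojlim\circ\, U_l$ (where $U_l(A)=({}_{l^\nu}A)$), sets up the Grothendieck spectral sequence $\lim^p U_l^q(A)\Rightarrow T_l^{p+q}(A)$, identifies $T_l^1A$ as an extension of $\hat A$ by $\lim^1{}_{l^\nu}A$, and then reads the conclusion off the long exact sequence of derived functors once the $\lim^1$ terms vanish. You instead avoid derived functors entirely: you take the six-term snake sequence of inverse systems, break it into short exact sequences of systems, and track where $\lim^1$ must vanish. Your version is more elementary and makes visible exactly which Mittag--Leffler hypotheses are used where; the paper's version is shorter once the derived-functor machinery is accepted and packages the boundary map $T_lC\to\hat A$ more cleanly. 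One point to correct: your justification that $(E_\nu)$ is Mittag--Leffler (``a sub-system of $(A/l^\nu)$, whose transition maps are surjective'') is not valid --- sub-systems of Mittag--Leffler systems need not be Mittag--Leffler (e.g.\ $(l^\nu\Z)$ with inclusions inside the constant system $\Z$), and $(E_\nu)$ is a quotient of $({}_{l^\nu}C)$, which is not assumed Mittag--Leffler. Fortunately this step is superfluous: you do not need $\lim^1E_\nu=0$, nor surjectivity of $\hat A\to\varprojlim \operatorname{Im}(A/l^\nu\to B/l^\nu)$, since left exactness of $\varprojlim$ already identifies $\varprojlim E_\nu$ with $\Ker(\hat A\to\hat B)$ inside $\hat A$, and the surjection $T_lC\to\varprojlim E_\nu$ (which uses only $\lim^1D_\nu=0$, legitimately deduced from $(D_\nu)$ being a quotient of the Mittag--Leffler system $({}_{l^\nu}B)$) then gives exactness at $\hat A$. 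With that step deleted or repaired, the proof stands.
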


\begin{proof} Let $T_l^i$ be the derived functors of $T_l:\Ab \to \Ab$. Write $T_l = \lim \circ U_l$, where $U_l(A)=({}_{l^\nu} A)$: this is a composition of two left exact functors. Since any injective abelian group $I$ is divisible, $U_l(I)$ is Mittag-Leffler hence $\lim$-acyclic, and there is a Grothendieck spectral sequence
\[E_2^{pq} = \lim\nolimits^p U_l^q(A)\Rightarrow T_l^{p+q}(A)\]
for any abelian group $A$. Note that $U_l^1(A) = (A/l^\nu)$ and $\lim^p = U_l^q=0$ for $p,q>1$; thus $T_l^m=0$ for $m>2$, and even for $m=2$ again by Mittag-Leffler. Finally we get a short exact sequence
\[0\to \lim\nolimits^1 {}_{l^\nu} A\to T_l^1 A\to \lim A/l^\nu = \hat A\to 0\]
for any $A\in \Ab$. If $0\to A\to B\to C\to 0$ is an exact sequence, we thus get a commutative diagram of exact sequences:
\[\begin{CD}
& \lim\nolimits^1 {}_{l^\nu} A@>>> \lim\nolimits^1 {}_{l^\nu} B@>>> \lim\nolimits^1 {}_{l^\nu} C\\
& @VVV @VVV @VVV\\
0\to T_l A\to T_l B\to T_l C \to & T_l^1 A @>>> T_l^1 B@>>> T_l^1 C\\
& @VVV @VVV @VVV\\
& \hat A@>>> \hat B@>>> \hat C.
\end{CD}\]

Under the hypothesis of the lemma, the $\lim^1$ vanish, hence the conclusion.
\end{proof}

\enlargethispage*{20pt}

Let $\Ab$ be the category of abelian groups; as in previous works, we  shall use the category $\Ab\otimes \Q$ of abelian groups up to isogenies, i.e. the  localisation of $\Ab$ by the Serre subcategory of groups of finite exponent.

\begin{lemma}\label{l1} a) For an abelian group $A$, the following conditions are equivalent:
\begin{thlist}
\item $A$ is the direct sum of a free finitely generated group and a group of finite exponent.
\item $A_\tors$ is of finite exponent and $\bar A:=A/A_\tors$ is free finitely generated.
\item $A$ is a sum of a finitely generated subgroup and a subgroup of finite exponent.
\item The image of $A$ in $\Ab\otimes \Q$ is isomorphic to a finitely generated group.
\end{thlist}
We say that such $A$ is \emph{finitely generated modulo isogenies} (in short: fgmi).\\
b) The fgmi abelian groups form a Serre subcategory of $\Ab$.
\end{lemma}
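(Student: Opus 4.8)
The plan is to prove the four equivalences of part a) cyclically, and then deduce part b) from the characterization in (iv), which exhibits fgmi groups as the preimage of a manifestly reasonable subcategory under the localisation functor. For part a), the implications (i)$\Rightarrow$(iii) and (i)$\Rightarrow$(iv) are immediate, and (iii)$\Rightarrow$(iv) is routine since the image in $\Ab\otimes\Q$ of a group of finite exponent vanishes, so the image of $A$ equals the image of a finitely generated subgroup, hence is finitely generated. The substantive direction is (iv)$\Rightarrow$(ii): if the image of $A$ in $\Ab\otimes\Q$ is isomorphic to the image of a finitely generated group $G$, then I would first note that an isomorphism in $\Ab\otimes\Q$ is represented by a pair of morphisms $A\to G$, $G\to A$ whose composites (both ways) are multiplication by some nonzero integer $N$ up to a morphism with image of finite exponent; chasing this shows that $A/A_\tors$ injects, after multiplication by $N$, into $G/G_\tors$, which is free of finite rank, so $\bar A=A/A_\tors$ is a finitely generated torsion-free group, i.e. free finitely generated. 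It also shows that $A_\tors$, which maps to the image-zero part, has image $0$ in $\Ab\otimes\Q$, i.e. is of finite exponent. Finally (ii)$\Rightarrow$(i) is the standard fact that a short exact sequence $0\to A_\tors\to A\to \bar A\to 0$ with $\bar A$ free splits, giving $A\cong A_\tors\oplus\bar A$ with $A_\tors$ of finite exponent.

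For part b), I would use characterization (iv). Let $\Ab_{\mathrm{fg}}\subset \Ab\otimes\Q$ denote the full subcategory of objects isomorphic to the image of a finitely generated group. One checks first that $\Ab_{\mathrm{fg}}$ is a Serre subcategory of $\Ab\otimes\Q$: it is closed under subobjects and quotients because in $\Ab\otimes\Q$ every subobject and quotient of the image of a finitely generated group is again so (this uses that $\Ab\otimes\Q$ has the image of finitely generated groups as noetherian and artinian objects — or more concretely, that $\Ab\otimes\Q$ restricted to fgmi objects is equivalent to the category of finite-dimensional $\Q$-vector spaces, via $A\mapsto \bar A\otimes\Q$), and it is closed under extensions because an extension of images of finitely generated groups is, up to isogeny, a finitely generated $\Q$-vector space, again in the image. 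Then, since the localisation functor $\pi\colon\Ab\to\Ab\otimes\Q$ is exact, the preimage $\pi^{-1}(\Ab_{\mathrm{fg}})$ — which by (iv) is exactly the class of fgmi groups — is automatically a Serre subcategory of $\Ab$: stability under subobjects, quotients and extensions is inherited from $\Ab_{\mathrm{fg}}$ via exactness of $\pi$.

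I expect the main obstacle to be the careful bookkeeping in (iv)$\Rightarrow$(ii): extracting honest statements about $A$ from an isomorphism that lives only up to isogeny requires choosing representatives $f\colon A\to G$ and $g\colon G\to A$ with $gf$ and $fg$ equal to multiplication by $N$ modulo finite-exponent error terms, and then arguing that $\ker f$ and $\operatorname{coker} f$ have finite exponent, so that $f$ becomes an isomorphism after inverting $N$; from this one reads off that $A_\tors$ has finite exponent (it is contained in a suitable $\ker$ after enlarging $N$) and that $\bar A$ embeds with finite-exponent cokernel into the free group $\bar G$, forcing $\bar A$ to be free of finite rank. None of this is deep, but it is the one place where the localisation formalism must be unwound by hand rather than quoted.
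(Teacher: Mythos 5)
Your part~a) is essentially sound and follows the same basic strategy as the paper: unwind the localisation to produce honest homomorphisms comparing $A$ with a finitely generated group, then read off the structure of $A_\tors$ and $\bar A$. One caution: a morphism in $\Ab\otimes \Q$ is a priori only a homomorphism from a subgroup $A'\subseteq A$ with $A/A'$ of finite exponent to a quotient $G/G''$ with $G''$ of finite exponent, so your opening claim that an isomorphism is represented by a pair of genuine homomorphisms $A\to G$, $G\to A$ with composites equal to $N$ up to finite-exponent error is not automatic; it can be arranged (using that $G_\tors$ is finite and, for the direction $G\to A$, passing to $\bar A$ to absorb the quotient by $A''$), but this is exactly the bookkeeping you flag, and it must be done. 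The paper sidesteps it more economically by first replacing the finitely generated group by a free one $M$ and producing a single homomorphism $M\to A$ with finite-exponent kernel and cokernel, via a splitting of a surjection onto $M$.

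The genuine gap is in part~b), at the closure of $\Ab_{\mathrm{fg}}$ under extensions inside $\Ab\otimes\Q$. Every short exact sequence in the Serre quotient $\Ab\otimes\Q$ lifts to a short exact sequence $0\to A'\to A\to A''\to 0$ in $\Ab$, so the assertion that an extension of objects of $\Ab_{\mathrm{fg}}$ again lies in $\Ab_{\mathrm{fg}}$ is, by your own criterion (iv), precisely the assertion that an extension of fgmi groups is fgmi --- the statement you are trying to deduce. The abstract equivalence of $\Ab_{\mathrm{fg}}$ with finite-dimensional $\Q$-vector spaces concerns that subcategory in isolation and says nothing about $\operatorname{Ext}^1$ computed in the ambient category $\Ab\otimes\Q$, which is what closure under extensions requires. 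So the ``pull back a Serre subcategory along the exact functor $\pi$'' formalism relocates the work without doing it. The missing computation is the one the paper carries out directly: given $0\to A'\to A\to A''\to 0$ with $A'$, $A''$ fgmi, the sequence $0\to A'_\tors\to A_\tors\to A''_\tors$ shows $A_\tors$ has finite exponent; and $\Ker(\bar A\to\bar A'')$ contains $N\Ker(\bar A\to\bar A'')$ in the image of $\bar A'$ (where $N$ is an exponent of $A''_\tors$), hence is free finitely generated, after which the freeness of $\bar A''$ splits the surjection $\bar A\to\bar A''$ and gives $\bar A$ free finitely generated. You should either supply this argument or restructure part~b) to prove it directly in $\Ab$.
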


\begin{proof} a) (i) $\iff$ (ii) $\Rightarrow$ (iii) $\Rightarrow$ (iv)  are trivial. (iv) $\Rightarrow$ (i): let $f:A\iso M$ be an isomorphism in $\Ab\otimes \Q$, where $M$ is finitely generated. If $\bar M$ is the quotient of $M$ by its torsion subgroup, then $M\to \bar M$ is an isomorphism in $\Ab\otimes \Q$; thus we may assume $M$ torsion-free, i.e. free. By calculus of fractions, there is a diagram in $\Ab$:
\[M\yb{u} \tilde M \by{\tilde f} A\]
where the kernel and cokernel of $u$ and $f$ are of finite exponent, and such that $f=\tilde f u^{-1}$ in $\Ab\otimes \Q$. Replacing $M$ by $\IM u$, we may assume $u$ surjective, hence split by some homomorphism $v:M\to \tilde M$. Then $\Ker fv$ and $\Coker fv$ have finite exponent; hence the composition
\[M\by{fv} A\by{\pi} \bar A\]
is injective and its cokernel has finite exponent, say $N$. Then $N\bar A\subseteq M$ is free finitely generated, hence so is $\bar A$. But then $\pi$ is split and $A\simeq \bar A\oplus A_\tors$. Finally, since $\Coker fv$ has finite exponent, so does $A_\tors$.

b) Let $0\to A'\to A\to A''\to 0$ be an exact sequence in $\Ab$. We must show that $A$ is fgmi if and only if so are $A'$ and $A''$. Suppose that $A$ is fgmi. By Condition (iii), so is $A''$. Moreover, $A'_\tors\inj A_\tors$ and $\bar A'\inj \bar A$, hence so is $A'$ as well by Condition (ii). Suppose now that $A'$ and $A''$ are fgmi. The exact sequence
\[0\to A'_\tors \to A_\tors \to A''_\tors\]
shows that $A_\tors$ has finite exponent. Moreover, if $N$ is an exponent of $A''_\tors$, the image of the map
\[\bar A'\to \Ker(\bar A\to \bar A'')\]
contains $N\Ker(\bar A\to \bar A'')$. Therefore $\Ker(\bar A\to \bar A'')$ is free finitely generated; since so is $\bar A''$, the surjection $\bar A\to \bar A''$ is split and $\bar A$ is also free finitely generated. We conclude with Condition (ii) again.
\end{proof}

\subsection{Geometry}

For a scheme $X$, write $\cd_l(X)$ for its étale $l$-cohomological dimension and  $\hat{\cd}_l(X)$ for the inf of those $i\ge 0$ such that $H^i_\cont(X,(F_\nu))=0$ for all inverse systems $(F_\nu)$ of sheaves of $l$-primary torsion on $X_\et$. The following lemma puts \cite[Prop. 2.3]{ct-sc} in its right generality (same proof).

\begin{lemma}\label{l4} $\hat{\cd}_l(X)=\cd_l(X)$.\qed
\end{lemma}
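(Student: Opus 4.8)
The plan is to compare continuous étale cohomology with ordinary étale cohomology through Jannsen's exact sequence, and to exploit that top-degree étale cohomology is a right exact functor. Let $(F_\nu)$ be an inverse system of $l$-primary torsion sheaves on $X_\et$ with surjective transition maps. Since inverse limits over $\N$ have cohomological dimension $1$, for every $i$ there is a short exact sequence
\[0\to \lim\nolimits^1 H^{i-1}_\et(X,F_\nu)\to H^i_\cont(X,(F_\nu))\to \lim H^i_\et(X,F_\nu)\to 0\]
(the basic computational input of \cite{jann}; compare Lemma \ref{l3}).

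To get $\hat{\cd}_l(X)\ge\cd_l(X)$ I would specialise to a constant system $(F)$: then $\lim\nolimits^1 H^{i-1}_\et(X,F)=0$ and the last map is an isomorphism, so $H^i_\cont(X,(F))\simeq H^i_\et(X,F)$; choosing $F$ with $H^{\cd_l(X)}_\et(X,F)\ne 0$ shows that continuous cohomology does not vanish in degree $\cd_l(X)$. For $\hat{\cd}_l(X)\le\cd_l(X)$, put $n=\cd_l(X)$ and take $i>n$. Each $F_\nu$ being $l$-primary torsion, $H^i_\et(X,F_\nu)=0$, so the sequence reduces to $H^i_\cont(X,(F_\nu))\simeq \lim\nolimits^1 H^{i-1}_\et(X,F_\nu)$; this vanishes already when $i-1>n$. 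The case $i=n+1$ is the real point: as $H^{n+1}_\et(X,-)$ vanishes on $l$-primary torsion sheaves, the functor $H^n_\et(X,-)$ is right exact on that abelian category, hence sends the surjective transition maps of $(F_\nu)$ to surjective transition maps of $(H^n_\et(X,F_\nu))$; that inverse system is therefore Mittag-Leffler, with vanishing $\lim\nolimits^1$. Hence $H^i_\cont(X,(F_\nu))=0$ for all $i>n$, i.e. $\hat{\cd}_l(X)\le n$.

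I expect the only non-formal point to be exactly the degree $n+1$ term $\lim\nolimits^1 H^n_\et(X,F_\nu)$. In \cite[Prop. 2.3]{ct-sc} this is controlled by finiteness of the cohomology groups involved; the added generality here is that finiteness of $X$ (or of the coefficients) is unnecessary, the vanishing being forced by right exactness of top-degree cohomology alone — together with surjectivity of the transition maps, which is where one would have to be careful if it were dropped, since $\lim\nolimits^1$ of an arbitrary inverse system of torsion abelian groups need not vanish.
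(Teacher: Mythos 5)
Your argument is correct and is exactly the one the paper has in mind: the paper offers no details beyond ``same proof'' as \cite[Prop.~2.3]{ct-sc}, and that proof is the Milnor sequence $0\to \lim\nolimits^1 H^{i-1}_\et(X,F_\nu)\to H^i_\cont(X,(F_\nu))\to \lim H^i_\et(X,F_\nu)\to 0$ of \cite[(3.1)]{jann} together with the Mittag--Leffler vanishing of the one critical $\lim\nolimits^1$ term in degree $\cd_l(X)+1$, obtained just as you do from the right exactness of $H^{\cd_l(X)}_\et(X,-)$ on $l$-primary torsion sheaves. You have correctly isolated the only non-formal step.

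One caveat, which concerns the statement rather than your reasoning: you assume the transition maps of $(F_\nu)$ are surjective, whereas the definition of $\hat{\cd}_l(X)$ in the paper quantifies over \emph{all} inverse systems of $l$-primary torsion sheaves. Some such restriction is genuinely needed, not merely a point ``where one would have to be careful'': already for $X=\Spec \bar k$ (so $\cd_l(X)=0$) the constant system given by the subgroups $l^\nu A\subseteq A=\bigoplus_{n\ge 1}\Z/l^n$, with the inclusions as transition maps, has $H^1_\cont(X,(l^\nu A))=\lim\nolimits^1 (l^\nu A)=\Coker\bigl(A\to \lim A/l^\nu A\bigr)\ne 0$, each $l^\nu A$ being $l$-primary torsion. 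So the inequality $\hat{\cd}_l(X)\le \cd_l(X)$ fails for the definition taken literally, and your surjectivity hypothesis (or at least a Mittag--Leffler hypothesis) must be regarded as implicit in it; it is satisfied by the systems actually used in the paper, such as $(\mu_{l^\nu}^{\otimes n})$. With that reading, your proof is complete and coincides with the intended one.
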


We also note the following result, whose proof is identical to that of \cite[Th. 5.1]{saito} and which implies it:

\begin{thm}\label{t0} Let $X\in \Sm(\Spec k)$, where $k$ is a finitely generated field of characteristic $\ne l$. For any $n\ge 0$, the image of $\tilde\cl^n_X$, and a fortiori that of $\cl^n_X$, is a finitely generated $\Z_l$-module.\qed
\end{thm}

(This proof is clarified if one promotes Saito's argument to a commutative diagram
\[\begin{CD}
CH^n(\sX)\otimes \Z_l&\Surj& CH^n(X)\otimes \Z_l\\
@V\cl^n_\sX VV @V\tilde \cl^n_X VV\\
H^{2n}_\cont(\sX,\Z_l(n))@>>> \tilde H^{2n}_\cont(X,\Z_l(n))
\end{CD}\]
where $\sX$ is an arithmetic model of $X$, using the finite generation of $H^{2n}_\cont(\sX,\Z_l(n))$ which follows from Theorem \ref{t1.1}.)

\section{Motivic cohomology}\label{s2.1} 

\subsection{Refined motivic cycle class maps}\label{s3.1} In the sequel we shall use a generalisation of \eqref{eq0bis} to all motivic cohomology:
\begin{multline}\label{eq00}
H^i(X,\Z(n))\otimes \Z_l\by{\alpha^{n,i}_X} H^i_\et(X,\Z(n))\otimes \Z_l\\
\by{\beta^{n,i}_X} \tilde H^i_\cont(X,\Z_l(n))\by{\gamma^{n,i}_X} H^i_\cont(X,\Z_l(n)).
\end{multline}

This was done in \cite[\S\S 2 and 3A]{cycletale}, except for $\tilde H^i_\cont(X,\Z_l(n))$. The same comments as in \S \ref{s1.2} apply,  except that one should explain which version of motivic cohomology is used for arithmetic schemes. We use Levine's extension of Bloch's complexes to schemes over a Dedekind domain \cite{levine}, developed by Geisser in \cite{geisser3}: more precisely, the Zariski (resp. étale) hypercohomology of Bloch's cycle complexes, namely the complexes of Zariski sheaves
\[\Z(n) := z^n(-,*)[-2n]\] 
and their étale versions $\Z(n)_\et$, as in \cite[\S 3]{geisser3}.

To define $\beta^{n,i}_\sX$ for an arithmetic scheme $\sX$, we use the isomorphism
\begin{equation}\label{eq8}
\Z(n)_\et\otimes \Z/l^\nu\simeq \mu_{l^\nu}^{\otimes n};
\end{equation}
extended to arithmetic schemes in \cite[Th. 1.2 4]{geisser3}. Since the étale cohomology of $\sX$ with finite coefficients is finite (Theorem \ref{t1.1}), $\beta^{n,i}_\sX$ may simply be seen as $l$-adic completion. To pass to smooth schemes over a field, we use the continuity of motivic and étale motivic cohomology (commutation with filtering inverse limits of schemes with affine transition morphisms): this follows from
\begin{itemize}
\item continuity of the cycle complexes themselves, 
\item continuity of Zariski and étale cohomology.
\end{itemize}

At the referee's request, we give details. The first point is seen explicitly from the definition of the terms of $\Z(n)$: for any scheme $\sX$ over a Dedekind scheme $S$, one has by definition
\[z^n(\sX,i) = \bigoplus_Z \Z\]
where $Z$ runs through the integral closed subschemes of codimension $n$ of $\sX\times_S\Delta^i_S$ which meet all faces properly.

In the second point, continuity for sheaves is classical: see \cite[Th. 5.7]{sga4} for étale cohomology. We reduce to this case by a hypercohomology spectral sequence argument. Details on the delicate points to deal with complexes of sheaves which are not [known to be] bounded below, and how to solve them, are given in \cite[\S 2C]{cycletale} in the case of schemes over a field; here one can proceed exactly in the same way by using \eqref{eq8} and \cite[Prop. 3.6]{geisser3}.

\begin{lemma}\label{l2} Let $X\in \Sm(k)$, where $k$ is a field. In \eqref{eq00}, $\beta^{n,i}_X$, $\gamma^{n,i}_X$ and $\gamma^{n,i}_X\beta^{n,i}_X$ have divisible kernels and torsion-free cokernels, while $\alpha^{n,i}_X$ has torsion kernel and cokernel. Moreover, $\alpha^{n,2n}_X=\alpha^n_X$ is bijective for $n=1$ and injective for $n= 2$.
\end{lemma}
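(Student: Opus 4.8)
The plan is to treat the three claims separately, reducing each to statements about the coefficient complexes that are already recorded in the excerpt. For the map $\alpha^{n,i}_X$: after tensoring with $\Z_l$, this is the change-of-topology map $H^i(X,\Z(n))\otimes\Z_l\to H^i_\et(X,\Z(n))\otimes\Z_l$. I would invoke the standard fact (from \cite[\S 2]{cycletale}, using the Beilinson--Lichtenbaum bound and the fact that $\Z(n)\to R\epsilon_*\Z(n)_\et$ has cone supported in degrees $\ge n+2$ with torsion cohomology sheaves after the relevant truncations) that the kernel and cokernel of $\alpha^{n,i}_X$ are torsion; since tensoring with the flat ring $\Z_l$ is exact, the same holds after $\otimes\Z_l$. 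The case $n=1$ is $H^i(X,\Z(1))\simeq H^i_\et(X,\Z(1))$ coming from $\Z(1)\simeq \G_m[-1]$ on both sites (Hilbert 90), and $n=2$, $i=4$ is the injectivity in the exact sequence \eqref{eq9} (more precisely its analogue for open $X$), which I would cite from \cite[Prop. 2.9]{cycletale}.

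For $\beta^{n,i}_X$, $\gamma^{n,i}_X$ and their composite: first handle $\gamma^{n,i}_X$. By the construction \eqref{eq1.1}, $\tilde H^i_\cont(X,\Z_l(n))=\colim_\sX H^i_\cont(\sX,\Z_l(n))$ over models $\sX/\Z[1/l]$, and the target $H^i_\cont(X,\Z_l(n))$ sits in a short exact sequence coming from the inverse-limit/$\lim^1$ presentation of continuous étale cohomology. I would compare the two $\lim^1$-sequences: $\tilde H^i_\cont$ is a filtered colimit of the sequences for the $\sX$, while $H^i_\cont(X,\ ) $ is the sequence for $X$ itself; the map between the torsion-coefficient parts $H^i_\et(X,\Z/l^\nu(n))$ is an isomorphism after passing to the colimit over $\sX$ (continuity of étale cohomology with finite coefficients, \cite[Th.~5.7]{sga4}), so the discrepancy between source and target of $\gamma^{n,i}_X$ is confined to the $\lim^1$ terms — hence $\gamma^{n,i}_X$ has divisible kernel (image of a $\lim^1$ of torsion-free-quotient pieces, so divisible) and torsion-free cokernel (it injects into a subquotient of an $H^{i-1}$-type $\lim^1$ of finite groups, which is a $\Z_l$-module, but one shows the relevant cokernel lands in a torsion-free group). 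For $\beta^{n,i}_X$: using \eqref{eq8}, $\Z(n)_\et\otimes\Z/l^\nu\simeq\mu_{l^\nu}^{\otimes n}$, so $\beta^{n,i}_\sX$ on an arithmetic model is literally $l$-adic completion (finiteness of $H^*_\et(\sX,\Z/l^\nu(n))$), which has divisible kernel and torsion-free cokernel by the universal-coefficient/completion sequence; passing to the colimit over $\sX$ — a filtered colimit, hence exact — preserves ``divisible kernel, torsion-free cokernel'' (divisibility is preserved by filtered colimits, and a filtered colimit of torsion-free groups is torsion-free). The composite $\gamma^{n,i}_X\beta^{n,i}_X$ then inherits both properties: one checks that a composite of two maps each with divisible kernel and torsion-free cokernel again has divisible kernel and torsion-free cokernel (the kernel of the composite is an extension of a subgroup of a divisible group that is itself divisible — here one needs that a subgroup of $\Ker\gamma$ which is the image of the divisible $\Ker\beta$ is divisible, and that the quotient $\Ker(\gamma\beta)/\Ker\beta$ injects into $\Ker\gamma$ — and similarly the cokernel of the composite is an extension of $\Coker\gamma$ by a quotient of $\Coker\beta$, both torsion-free, so it is torsion-free, using that an extension of torsion-free by torsion-free is torsion-free).

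The main obstacle I anticipate is the bookkeeping around $\gamma^{n,i}_X$: the interplay of the two $\lim$/$\lim^1$ exact sequences and the colimit over arithmetic models, and in particular verifying cleanly that the cokernel of $\gamma^{n,i}_X$ is genuinely torsion-free rather than merely $l$-adically separated — this is exactly the subtlety illustrated by the examples in \S\ref{s1.2} ($\tilde H^1_\cont(\C,\Z_l(1))\simeq\C^*\otimes\Z_l$ versus $H^1_\cont(\C,\Z_l(1))=0$, and the $\Q$ example in degree $2$), which show the kernel can be large and divisible and the cokernel must be handled with care. Everything else is a routine diagram chase once \eqref{eq8}, Lemma \ref{l4}, the continuity statements, and the Beilinson--Lichtenbaum-type comparison for $\alpha$ are in hand.
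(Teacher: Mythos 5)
Your proposal is correct and follows essentially the same route as the paper: the paper simply cites \cite[Th.~2.6~c)]{cycletale} for $\alpha^{n,i}_X$ and \cite[Cor.~3.5]{cycletale} for the maps into continuous cohomology (remarking that the same proof works for $\beta^{n,i}_X$ and $\gamma^{n,i}_X$ separately), and it handles the last claim exactly as you do, via $\Z(1)\simeq\G_m[-1]$ for $n=1$ and \eqref{eq9} for $n=2$. The completion/universal-coefficient mechanism on arithmetic models that you spell out is precisely the content of those citations, so your write-up just makes the outsourced details explicit.
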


\begin{proof} For $\gamma^{n,i}_X\beta^{n,i}_X$, see \cite[Cor. 3.5]{cycletale}; the proof is the same for $\beta^{n,i}_X$ and $\gamma^{n,i}_X$. For $\alpha^{n,i}_X$, see \cite[Th. 2.6 c)]{cycletale}. The last claim follows from the isomorphism
\begin{equation}\label{gm}
\Z(1)\simeq \G_m[-1].
\end{equation} 
for $n=1$ and from \eqref{eq9} for $n=2$.
\end{proof}

Note that if $\cl^n_X$ is not injective, neither is $\beta^n_X$ in \eqref{eq0bis}, and then $CH^n_\et(X)$ is \emph{not} finitely generated since it contains a nonzero divisible subgroup; see Proposition \ref{p3} for (minimal) examples and Theorem \ref{l5.1} for positive cases over a finite field.

\subsection{Naïve and non naïve higher Chow groups} For an arithmetic scheme $\sX$, write $H^*(\sX,\Z(n))_n$ for the cohomology groups of the Bloch-Levine cycle complex $z^n(\sX,*)$: more precisely, $H^i(\sX,\Z(n))_n \allowbreak= H^{2n-i}(z^n(\sX,*))$. These are the \emph{naïve higher Chow groups}. They map to motivic cohomology, but this map is mysterious in general. Nevertheless:

\begin{thm}\label{t2.1} The map $H^i(\sX,\Z(n))_n\to H^i(\sX,\Z(n))$ is bijective for $i\ge 2n$; in particular, $CH^n(\sX)\iso H^{2n}(\sX,\Z(n))$.
\end{thm}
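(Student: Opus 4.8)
The plan is to compare the two complexes $z^n(\sX,*)$ and the Zariski hypercohomology of the sheafification $\Z(n)$, via the local-to-global (coniveau/Gersten) spectral sequence, and to use the fact that the ``naïve'' cohomology only differs from the ``non-naïve'' one in the range where negative higher Chow groups of the residue fields or local rings can contribute. First I would recall that there is a natural morphism of complexes $z^n(\sX,*) \to R\Gamma_{\Zar}(\sX,\Z(n))$ inducing the map in question, and that both sides sit in a coniveau spectral sequence indexed by the codimension of points of $\sX$. For the right-hand (hypercohomology) side this is the standard Gersten-type spectral sequence
\[
E_1^{p,q} = \bigoplus_{x\in \sX^{(p)}} H^{q-p}(k(x)\text{ or }\sO_{\sX,x},\Z(n-p))\Rightarrow H^{p+q}(\sX,\Z(n)),
\]
(with the usual care about whether the point lies over a closed point of $\Spec\Z$ or the generic one), while for the left-hand side the same filtration by codimension of support gives a spectral sequence whose $E_1$-terms are the \emph{naïve} higher Chow groups $H^*(-,\Z(n-p))_{n-p}$ of those same local rings/residue fields. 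The key input, due to Levine and Geisser, is that for a field or a regular local ring of a smooth arithmetic scheme these naïve and non-naïve groups agree in the relevant degrees — essentially because for (semi)local rings the Bloch complex already computes the right thing, there being nothing to sheafify.

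The second and main step is a vanishing/degree count. For the map $H^i(\sX,\Z(n))_n \to H^i(\sX,\Z(n))$ to be bijective for $i\ge 2n$, I would show that in both spectral sequences all contributions to total degree $\ge 2n$ come from $E_1$-terms $H^{q-p}(\cdot,\Z(n-p))$ with $q-p \le 2(n-p)$, i.e. from the part of the cycle complexes living in non-negative ``motivic weight-minus-degree''; and that in that range the comparison map on $E_1$-terms is an isomorphism. Concretely, a point $x$ of codimension $p$ contributes to $H^i(\sX,\Z(n))$ a term $H^{i-p}(k(x),\Z(n-p))$ (roughly), and higher Chow groups $H^j(F,\Z(m))$ of a field vanish for $j>m+1$ (and $H^{m}(F,\Z(m)) = K^M_m(F)$, $H^{m+1}=0$ unless one is over $\Z$ with an arithmetic twist, etc.), so that a term with $i\ge 2n$ forces $i-p \ge 2n-p = (n-p)+n \ge (n-p)+p$; the surviving terms are exactly those where the Bloch complex of the local ring is known (by \cite{levine}, \cite{geisser3}) to compute its sheaf hypercohomology. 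The ``$i\ge 2n$'' threshold is precisely the boundary at which only the $q-p = 2(n-p)$ diagonal (the Chow-group diagonal) and things below it survive, which is why $CH^n(\sX)\iso H^{2n}(\sX,\Z(n))$ falls out as the edge case.

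I expect the main obstacle to be the careful bookkeeping in mixed characteristic: the residue fields of an arithmetic scheme are of two kinds (those lying over the generic point of $\Spec\Z$, which are of characteristic $0$, and those over a closed point, which are finite or function fields over $\F_p$), and the local rings along a vertical divisor are one-dimensional of mixed characteristic, so one must invoke Geisser's version of the localization sequence and the Gersten resolution for $\Z(n)_{\Zar}$ over a Dedekind base (from \cite{geisser3}, building on \cite{levine}) rather than the smooth-over-a-field statements, and check that the naïve complex satisfies the same localization/purity so that the two spectral sequences really are isomorphic in the stated range. A secondary technical point is justifying that the naïve higher Chow groups of the relevant (semi-)local rings coincide with their sheaf-theoretic motivic cohomology in degrees $\ge 2(n-p)$ — this is where one genuinely uses that there is ``nothing above the diagonal'' for local rings, i.e. a Nisnevich/Zariski-local vanishing, which for fields is classical and for arithmetic local rings is exactly the content one extracts from \cite[Th. 1.2]{geisser3}. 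Once these two comparisons are in place, the theorem follows by a routine five-lemma/spectral-sequence comparison argument.
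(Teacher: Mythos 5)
Your strategy founders on its very first step, and the obstruction is precisely the one the paper's proof is designed to avoid. You propose to filter the na\"ive complex $z^n(\sX,*)$ by codimension of support and to read off a coniveau spectral sequence whose $E_1$-terms are (na\"ive) higher Chow groups of the residue fields of $\sX$. But identifying the graded pieces $z^n_{\ge p}/z^n_{\ge p+1}$ with sums of cycle complexes of the points $x\in\sX^{(p)}$ requires the localization (moving) theorem for the na\"ive complexes over the base $\Spec\Z$ — one must know that $z_d(Z,*)/z_d(Z',*)\to z_d(Z\smallsetminus Z',*)$ is a quasi-isomorphism for closed subschemes of the arithmetic scheme $\sX$. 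Over a field this is Bloch's theorem, but over a Dedekind base Levine's extension covers only smooth schemes over a \emph{semi-local} Dedekind ring; the full localization theorem for arithmetic schemes is exactly what is not available, and it is also what would be needed to make your "nothing to sheafify for (semi-)local rings" comparison assemble into a global statement. In other words, the existence of your first spectral sequence with the stated $E_1$-terms is essentially equivalent to the theorem you are trying to prove (and more). The paper sidesteps this by using only the \emph{first} step of such a filtration, namely Levine's localization sequence relating $H^*(\sX,\Z(n))$ to the generic fibre $X$ and the closed fibres $\sX_p$, all of which live over fields where Bloch's theorem applies; the case $i>2n$ is then a direct vanishing, and the case $i=2n$ is a four-lemma comparison with Fulton's exact sequence for Chow groups.

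There is a second, subtler gap you would hit even granting both spectral sequences: you would need the comparison map to commute with the $E_1$-differentials, which is a chain-level compatibility between the boundary maps of the Bloch--Levine localization sequence and those of the Gersten/Rost complexes. This is not formal; it is the content of Lemma \ref{l3.1} in the paper, where an explicit map $\Theta_0:z^n(X,1)\to C^{n-1}(X,K_n^M)$ (sending a generator $W$ to the norm of $t/(t-1)$ on its projection) is constructed to verify the one instance of this compatibility that the argument actually needs. A minor further point: your vanishing threshold for fields should be $H^j(F,\Z(m))=0$ for $j>m$ (Nesterenko--Suslin--Totaro), not $j>m+1$, though this does not affect the shape of your degree count.
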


This will be used in the proof of Theorem \ref{t3}.

\begin{proof} If $\sX$ were smooth over a field, this would follow from Bloch's localisation theorem for naïve higher Chow groups: the latter is extended in \cite{levine} only to smooth schemes over a semi-local Dedekind ring. The strategy is to reduce to the field case.

Let $X$ be the generic fibre of $\sX$ over $\Spec \Z$, and for each prime $p$ $\sX_p$ be its fibre at $p$ (it may be empty). By \cite[Th. 1.7]{levine}, we have a long exact sequence
\begin{multline}\label{eq2.1}
\dots\to H^{i-1}(X,\Z(n))\to \bigoplus_p H^{i-2}(\sX_p,\Z(n-1))\\
\to H^i(\sX,\Z(n))\to H^i(X,\Z(n))\to \dots
\end{multline}

Let $i>2n$. The claim is that $H^i(\sX,\Z(n))=0$: this follows from \eqref{eq2.1}, since $H^i(X,\Z(n))=H^{i-2}(\sX_p,\Z(n-1))=0$.

For $i=2n$, we compare \eqref{eq2.1} with an exact sequence for (ordinary) Chow groups. Namely, we have the following commutative diagram of exact sequences
\begin{equation}\label{eq2.2}\tiny
\begin{CD}
A^{n-1}(X,K_n^M)&\by{\delta}& \bigoplus\limits_p CH^{n-1}(\sX_p) &\to& CH^{n}(\sX)&\surj& CH^{n}(X)\\
&& @Vb V\wr V @Vc VV @Vd V\wr V\\
H^{2n-1}(X,\Z(n))&\by{\delta'}& \bigoplus\limits_p H^{2n-2}(\sX_p,\Z(n-1)) &\to& H^{2n}(\sX,\Z(n))&\to& H^{2n}(X,\Z(n)).
\end{CD}
\end{equation}

Here, $b$ and $d$ are bijective because we are over fields, and the top exact sequence is the one of Fulton \cite[Prop. 1.8]{fulton}, extended to the left thanks to the Gersten complexes associated to $\sX$, $X$ and $\sX_p$ as in Rost \cite[p. 356]{rost}.  This already gives the surjectivity of $c$, and its injectivity follows from

\begin{lemma}\label{l3.1}  
Let $h^n(X,1)=\Ker(z^n(X,1)\by{d'} z^n(X,0))$, where $d'$ is the differential of the complex $z^n(X,*)$. There exists a homomorphism $\Theta:h^{n}(X,1)\to  A^{n-1}(X,K_n^M)$ such that $\delta'\pi =b\delta\circ \Theta$, where $\pi:h^n(X,1)\to H^{2n-1}(X,\Z(n))$ is the natural surjection.
\end{lemma}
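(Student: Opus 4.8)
The plan is to construct $\Theta$ at the level of cycles and then check that the required identity $\delta'\pi = b\delta\circ\Theta$ holds. Recall that $z^n(X,1)$ is generated by codimension-$n$ cycles on $X\times\Delta^1$ meeting the faces properly, and an element $\xi\in h^n(X,1)$ is such a cycle with $d'\xi=0$, i.e. whose two face restrictions (at $t=0$ and $t=\infty$, say) cancel in $z^n(X,0)$. The class $\pi(\xi)\in H^{2n-1}(X,\Z(n))$ is the corresponding motivic cohomology class, and $\delta'\pi(\xi)$ is its boundary under the localisation sequence \eqref{eq2.2}, which on components over $p$ records (a motivic refinement of) the specialisation of $\xi$ to the special fibre $\sX_p$. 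So I need a $K^M_n$-cycle $\Theta(\xi)\in A^{n-1}(X,K^M_n)$ — that is, a family of points $x$ of codimension $n-1$ on $X$ together with symbols in $K^M_n(k(x))$ — whose Fulton-style boundary $\delta$, transported via $b$ to higher Chow groups of the $\sX_p$, matches $\delta'\pi(\xi)$. The natural candidate is to read the cycle $\xi$, which lives on $X\times\Delta^1$, as the data of its generic behaviour in the $\Delta^1=\mathbf{A}^1$ direction: projecting the support of $\xi$ to $X$ gives (generically) a codimension-$(n-1)$ subset, and the parameter $t$ restricted to each such component is a rational function, hence an element of $k(x)^*=K^M_1(k(x))$; combined with the codimension-$(n-1)$-cycle structure this is precisely the raw material of an element of $A^{n-1}(X,K^M_n)$ (one uses that $z^n(-,1)$ surjects onto $K^M_n$ of residue fields in the relevant Gersten/cycle-complex comparison). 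This is where I expect Marc Levine's help, acknowledged in the paper, to be used: pinning down the correct such map $\Theta$ and the compatibility of the two boundary maps is the crux.

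Concretely I would proceed as follows. First, set up both complexes explicitly in low degrees near $z^n(X,*)$: the cycle complex $z^n(X,0)\leftarrow z^n(X,1)\leftarrow z^n(X,2)$, and the Gersten/Rost-style cycle complex $C^*(X;K^M_n)$ computing $A^*(X,K^M_n)$, and recall the comparison between $z^n(-,*)$ restricted to the "weight $n$, near the top" part and Milnor $K$-theory of residue fields — this is the classical identification $H^{2n-1}(\operatorname{Spec} F,\Z(n))\simeq K^M_n(F)$ together with its compatibility with residues, valid over a field. This gives a well-defined $\Theta$ on the subgroup $h^n(X,1)$, landing in $A^{n-1}(X,K^M_n)$, essentially by associating to a cycle in $z^n(X,1)$ its generic points over $X$ together with the Milnor symbols coming from the $\mathbf{A}^1$-coordinate, and checking that the cycle-complex differential $d'$ going to $z^n(X,0)$ corresponds under this dictionary to the Gersten differential, so that $\xi\in\ker d'$ maps into the cycles for $A^{n-1}(X,K^M_n)$ (the kernel being taken modulo the image of the next differential is not needed, since we only want a class in $A^{n-1}$). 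Second, identify the two localisation/boundary maps: Levine's localisation sequence \eqref{eq2.1} has a boundary $\delta'$ to $\bigoplus_p H^{2n-2}(\sX_p,\Z(n-1))$ which, under $b$, is the classical $\delta$ of Fulton's exact sequence for Chow groups, by Geisser's comparison \cite{geisser3} identifying $H^{2n-2}(\sX_p,\Z(n-1))$ with $CH^{n-1}(\sX_p)$ and the functoriality of the Gersten resolution in Rost's framework \cite[p.~356]{rost}. Matching these on the nose reduces to a computation on a single discrete valuation ring, namely $\mathcal{O}_{X,x}$ for $x$ a point of $\sX$ of codimension $1$ specialising into some $\sX_p$, where everything is the classical boundary in Milnor $K$-theory / higher Chow groups of a DVR.

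The main obstacle, as flagged, is the construction and well-definedness of $\Theta$: one must be careful that the passage from a cycle in $z^n(X,1)$ to a Milnor-$K$-cycle on $X$ does not depend on choices (e.g. of defining equations, of presentation of $\xi$ as a sum of irreducible cycles) and that it is additive, i.e. descends from cycles to the group $h^n(X,1)$; the cleanest route is probably not to write a formula for $\Theta$ by hand but to obtain it from the known natural transformation between Bloch's complex and the Gersten–Milnor complex in the relevant range (the "sheafified" comparison, e.g. via the Gersten resolution of $\Z(n)$ on a smooth scheme over a field), so that compatibility with differentials and boundary maps is formal. Once $\Theta$ is in hand, the identity $\delta'\pi=b\delta\circ\Theta$ is a diagram chase plus the DVR computation above, and feeding it back into \eqref{eq2.2} gives injectivity of $c$ in degree $i=2n$, completing the proof of Theorem \ref{t2.1}; the cases $i>2n$ were already disposed of by vanishing.
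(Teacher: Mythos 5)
Your proposal follows essentially the same route as the paper: an explicit chain-level map $\Theta_0:z^n(X,1)\to C^{n-1}(X,K_n^M)$ built from the $\A^1$-coordinate of a cycle, checked to commute with the differentials so that it restricts to $\Theta$ on $h^n(X,1)$, followed by the compatibility of the two boundary maps obtained by closing cycles of $X$ up in $\sX$ and applying the differentials there. The one point to pin down is the formula: for an integral $W$ with $Z=\overline{p_1(W)}$ and $W\to Z$ generically finite, one takes $g=N_{k(W)/k(Z)}\bigl(t/(t-1)|_W\bigr)$ rather than $t$ itself, so that $\div(g)=W(0)-W(1)$, and one sends the components with $W=Z\times\A^1$ to zero (they are boundaries from $z^n(X,2)$).
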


\begin{proof} In order to motivate the construction, we first review the definition of an isomorphism $\theta: H^{2n-1}(X,\Z(n))\iso A^{n-1}(X,K_n^M)$: 
as we are over a field, we only need to use Bloch's results.  
The localisation theorem of \cite[Cor. (0.2)]{bloch-corr} implies Gersten's conjecture for the cohomology sheaves $\sH^q(\Z(n))$ \cite[Th. (10.1)]{bloch-adv}. This in turn implies that $\Z(n)$ is acyclic in degrees $>n$
and, by the theorem of Nesterenko-Suslin and Totaro \cite[Th. 4.9]{ns}, \cite{totaromilnor}, we get an isomorphism of $\sH^n(\Z(n))$ with the $n$-th unramified Milnor $K$-sheaf $\sK_n^M$ of \cite[p. 360]{rost} (see \cite[p. 144]{ns}). Thus we get an exact triangle
\[\tau_{<n}\Z(n)\to \Z(n)\to \sK_n^M[-n]\by{+1}\]
which yields a long exact sequence of hypercohomology groups
\begin{multline*}
\dots H^{2n-1}(X,\tau_{<n}\Z(n))\to H^{2n-1}(X,\Z(n))\by{\theta} H^{n-1}(X,\sK_n^M)\\
\to H^{2n}(X,\tau_{<n}\Z(n))\to \dots
\end{multline*}
and $H^{n-1}(X,\sK_n^M)$ is canonically isomorphic to $A^{n-1}(X,K_n^M)$ by Gersten's conjecture (cf. \cite[Cor. 6.5]{rost}). Finally, Gersten's conjecture also implies that $H^p(X,\sH^q(\Z(n)))=0$ for $p>q$, hence $H^m(X,\tau_{<n}\Z(n))=0$ for $m>2n-2$ by the hypercohomology spectral sequence.

Ideally, one should then show that $b\delta\theta=\delta'$. For this, we would need a description of $\theta$ on the chain level. Instead we shall only prove the statement of the lemma, using the following construction due to Marc Levine.

Define a map $\Theta_0:z^n(X,1)\to C^{n-1}(X,K_n^M)$ as follows. We identify $\Delta^1$ with $\A^1$ by sending $(0,1)$ to $0$ and $(1,0)$ to $1$. It suffices to define $\Theta_0$ on the integral generators $W$ of $z^n(X,1)\subset Z^n(X\times \A^1)$. Let $Z\subset X$ be the closure of $p_1(W)$. If $W\to Z$ is not generically finite, we set $\Theta_0(W)=0$. In this case, $W=Z\times \A^1$ is the boundary of $Z \times \Delta^2$, so we can neglect these cycles. If $W\to Z$ is generically finite, the function $t/(t-1)$ on $\A^1$ restricts to a function $f$ on $W$, and $W(0)-W(1)= \div(g)$ where $g=N_{k(W)/k(Z)}(f)$ \cite[Prop. 1.4 and \S 1.6]{fulton}. We set $\Theta_0(W)=(Z,g)\in \bigoplus_{x\in X^{(n-1)}} k(x)^*$. By construction, the diagram
\begin{equation}\label{eq3.1}
\begin{CD}
C^{n-1}(X,K_n^M)@>d>> C^n(X,K_n^M)=Z^n(X)\\
@A\Theta_0 AA @A||AA\\
z^n(X,1)@>d'>> z^n(X,0) = Z^n(X)
\end{CD}
\end{equation}
commutes. In particular $\Theta_0$ sends $\Ker d'$ to $\Ker d$, hence defines a map $\Theta$ as in the lemma.

It is likely that $\Theta$ represents $\theta$, but this is not necessary: we only need the statement of the lemma to conclude the proof of Theorem \ref{t2.1}.  But we can repeat the same construction verbatim by replacing $X$ with $\sX$; we get a commutative diagram  similar to \eqref{eq3.1}, which receives a map from \eqref{eq3.1} by the operation ``closing a cycle of $X$ in $\sX$''. For both localisation sequences: the one involving Rost cycle complexes and the one involving Bloch-Levine cycle complexes, the boundary map $\delta$ (resp. $\delta'$) is obtained by closing up a cycle on $X$ in $\sX$ and then applying the differential of the complex. (In the second case, the point is that there are no cycles to move, so the construction can done globally on $\Spec \Z$.) This shows that, if $w\in h^n(X,1)$, then the boundary of (the cohomology class of) $w$ coincides with that of $\Theta(w)$.
\end{proof}

This concludes the proof of Theorem \ref{t2.1}.
\end{proof}

\enlargethispage*{20pt}

\section{A naïve calculation} \label{s1}

Here we mimick for $n>1$ the argument giving injectivity of $\cl^n_X$ for $n=1$ by using étale motivic cohomology, and see what goes wrong.

The isomorphism \eqref{eq8} yields short exact sequences
\begin{equation}\label{eq10}
0\to H^j_\et(X,\Z(n))/l^\nu\to H^j_\et(X,\mu_{l^\nu}^{\otimes n})\to {}_{l^\nu} H^{j+1}_\et(X,\Z(n))\to 0.
\end{equation}

Taking the inverse limit and using Mittag-Leffler, we get a short exact sequence
\[0\to H^j_\et(X,\Z(n))^{\wedge}\by{b} \lim H^j_\et(X,\mu_{l^\nu}^{\otimes n})\to T_l (H^{j+1}_\et(X,\Z(n)))\to 0\]
where $^{\wedge}$ means $l$-adic completion. For $j=2n$, this yields a commutative diagram, with notation as in  \eqref{eq0bis}:
\[\begin{CD}
CH^n(X)\otimes \Z_l@>\alpha^n_X>> CH^{n}_\et(X)\otimes \Z_l @>\gamma^n_X\beta^n_X>>  H^{2n}_\cont(X,\Z_l(n))\\
@Vc VV@Vc_\et VV @VdVV\\
CH^n(X)^{\wedge}@>\hat\alpha^n_X>> CH^{n}_\et(X)^{\wedge}@>b>> \lim H^{2n}_\et(X,\mu_{l^\nu}^{\otimes n})
\end{CD}\]
where  $b$ is injective. This shows that $\Ker \cl^n_X\subseteq \Ker (\hat \alpha^n_X c)$.

 If $n=1$, $\alpha^1_X$ is an isomorphism, hence so is $\hat \alpha^1_X$. By hypothesis, $CH^1(X)$ is finitely generated \cite{picfini}, hence $c$ is an isomorphism and $\cl^1_X$ is injective,  cf. \cite[Rem. 6.15 a)]{jann}. Suppose now that $n\ge 2$. If $CH^n(X)$ is finitely generated (Bass conjecture), $c$ is an isomorphism and $\Ker \alpha^n_X$ is finite (Lemma \ref{l2}). But the same is far from clear for $\Ker \hat \alpha^n_X$: for example, nothing prevents a priori $CH^{n}_\et(X)$ from being $l$-divisible, hence $CH^{n}_\et(X)^{\wedge}$ from being $0$!
 
 We can approach this latter kernel via Lemma \ref{l3}, neglecting the finite group $\Ker \alpha^n_X$.  If $X$ is of finite type over $\Spec \Z[1/l]$, the groups $H^{2n-1}_\et(X,\mu_{l^\nu}^{\otimes n})$ are finite and the Mittag-Leffler hypotheses of this lemma are verified; hence we get an exact sequence, up to a finite group
\[T_l (CH^{n}_\et(X))\to T_l (\Coker \alpha^n_X) \to \Ker \hat\alpha^n_X\to 0.  \]

 For $n=2$, we are contemplating the effect of $T_l$ on the map on the right of \eqref{eq9}. It seems difficult to get further in general with such an approach.

Finally, one might hope to reason directly with $CH^n_\et(X)$ rather than $CH^n(X)$. However this approach is doomed: 

\begin{prop}\label{p3}
The map $\beta^2_X$ is not injective in general  in \eqref{eq0bis}, and $CH^2_\et(\sX)$ is in general not finitely generated for arithmetic schemes $\sX$.
\end{prop}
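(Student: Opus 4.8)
The plan is to produce, for each of the two assertions, a single explicit example, exploiting the fact that the failure of injectivity of $\beta^2_X$ and the non-finite-generation of $CH^2_\et(\sX)$ are both detected by the same mechanism: a nonzero divisible subgroup in étale motivic cohomology, coming from the exact sequence \eqref{eq9}, i.e. from nonzero $H^3_\nr(-,\Q/\Z(2))$. By Lemma \ref{l2}, $\beta^{n,i}_X$ has divisible kernel, and by the remark following that lemma, $\beta^2_X$ fails to be injective precisely when $CH^2_\et(X)$ carries a nonzero divisible subgroup; so the two statements are really one statement, applied first to a smooth projective variety over a suitable field $k$ and then to an arithmetic scheme.

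First I would recall the classical source of nonzero $H^3_\nr(X,\Q/\Z(2))$ for $X$ smooth projective: either (a) Colliot-Thélène--Ojanguren type examples (products or quadric bundles over function fields with nontrivial unramified cohomology in degree $3$), or (b) the observation that over a ``big'' field the group $H^3_\nr(X,\Q/\Z(2))$ can be infinite and divisible — e.g. for $X$ a suitable variety over $\C$ or over a large finitely generated field, using that $H^3_\nr(\C(Y)/\C,\Q/\Z(2))$ can be nonzero (Colliot-Thélène--Voisin). From \eqref{eq9} we get $CH^2_\et(X)/\alpha^2_X CH^2(X)\simeq H^3_\nr(X,\Q/\Z(2))$; if the latter has a nonzero divisible subgroup then so does $CH^2_\et(X)$, since $CH^2(X)$ — or at least the relevant part — need not kill divisibility. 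Concretely, to see $\beta^2_X$ non-injective it is enough to exhibit $X/k$ with $H^3_\nr(X,\Q/\Z(2))$ containing a nonzero divisible subgroup, because then $\Ker\beta^2_X\supseteq$ (that divisible subgroup) modulo the torsion-kernel issues, which are handled by Lemma \ref{l2}: $\beta^2_X$ has divisible kernel and torsion-free cokernel, and $\gamma^2_X\beta^2_X = \gamma^n_X\beta^n_X$ is a map whose target $H^{2n}_\cont(X,\Z_l(n))$ is, by Theorem \ref{t0}, a finitely generated $\Z_l$-module, hence has no divisible subgroups; so any divisible subgroup of $CH^2_\et(X)\otimes\Z_l$ dies in continuous étale cohomology, forcing it into $\Ker\beta^2_X$ (it is already in $\Ker(\gamma^2_X\beta^2_X)$, and $\gamma^2_X$ is injective on the image by Lemma \ref{l2} — more precisely one uses that $\gamma$ has torsion-free cokernel and divisible kernel, and a divisible subgroup maps to $0$ in a finitely generated group). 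The cleanest route: pick the example so that $CH^2(X)$ is controlled (finitely generated modulo torsion, Lemma \ref{l1}) — e.g. $X$ with $CH_0$-triviality or a surface — so that the divisibility visibly survives from $H^3_\nr$ into $CH^2_\et(X)$.

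For the second assertion, about an arithmetic scheme $\sX$, the plan is to take $\sX$ smooth over $\Z[1/N]$ (or over $\Z$) whose generic fibre $X/\Q$, or a fibre, already has the property above, or alternatively to use directly that for $\sX$ of finite type over $\Spec\Z$ the group $H^3_\nr(\sX,\Q/\Z(2))$ — equivalently a suitable unramified cohomology group — is non-finitely-generated, e.g. containing $\bigoplus_p \Q_p/\Z_p$-type contributions from the many closed fibres, visible through the localisation sequence \eqref{eq2.1}–\eqref{eq2.2} which expresses $CH^2(\sX)$ in terms of $\bigoplus_p CH^1(\sX_p)$ and $CH^2(X)$; passing to étale motivic cohomology the analogue of \eqref{eq9} over $\sX$ (or the spectral sequence with $H^*_\et$ and the cohomology sheaves) shows $CH^2_\et(\sX)$ acquires a divisible, non-finitely-generated part. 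Even simpler: take $\sX = \Spec\sO_K[1/N]$-type or $\sX$ with $X=\P^n_\Q$ removed nothing — no, one needs $H^3$; so I would instead cite the known computation (Geisser, or Kahn's own earlier work, or the examples in \cite{cell}, \cite{cycletale}) that $H^3_\nr(X,\Q/\Z(2))$, hence $CH^2_\et(X)$, is infinite for explicit $X$ over $\Q$ or over a number field, and then spread out to an arithmetic model.

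The main obstacle I anticipate is exhibiting the divisibility cleanly: it is easy to make $H^3_\nr(X,\Q/\Z(2))$ nonzero, but one wants a \emph{divisible} (so non-finitely-generated) subgroup surviving into $CH^2_\et$, and one must check that the map in \eqref{eq9} does not accidentally land the divisible part inside the image of $\alpha^2_X$ in a way that interacts badly with the $\Z_l$-completion in \eqref{eq0bis}. The way around this is precisely the argument via Theorem \ref{t0} and Lemma \ref{l2} sketched above: the target of $\cl^n_X$ and the target of $\tilde\cl^n_X$ are finitely generated $\Z_l$-modules, so \emph{any} nonzero divisible subgroup of $CH^2_\et(X)\otimes\Z_l$ is automatically in $\Ker(\gamma^2_X\beta^2_X)$, and since $\gamma^2_X$ has divisible kernel but its source $\tilde H^{2n}_\cont$ is also finitely generated over $\Z_l$ (Theorem \ref{t0} again, applied to the image), the divisible subgroup must already be killed by $\beta^2_X$. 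Thus the whole proof reduces to the single geometric input: produce one smooth projective $X$ over a finitely generated field with $CH^2_\et(X)\otimes\Z_l$ non-finitely-generated (equivalently, with divisible part, equivalently with infinite $H^3_\nr(X,\Q/\Z(2))$ after $\otimes\Z_l$), and one arithmetic $\sX$ likewise — and for both I would simply invoke the explicit examples already in the literature cited in the paper, rather than construct a new one.
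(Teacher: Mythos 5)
There is a genuine gap in your argument for the first assertion. Your key reduction is that any nonzero divisible subgroup of $CH^2_\et(X)\otimes\Z_l$ must automatically lie in $\Ker\beta^2_X$, because ``the target $\tilde H^{2n}_\cont$ is also finitely generated over $\Z_l$ (Theorem \ref{t0} again, applied to the image)''. This is not what Theorem \ref{t0} says: it controls only the image of $\tilde\cl^n_X=\beta^n_X\alpha^n_X$, i.e.\ of classes coming from $CH^n(X)\otimes\Z_l$ via an arithmetic model, and the divisible subgroup you want to kill lives precisely in the part of $CH^2_\et(X)$ \emph{not} in the image of $\alpha^2_X$ (it is detected by the cokernel term $H^3_\nr(X,\Q/\Z(2))$ of \eqref{eq9}). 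Moreover the groups $\tilde H^{i}_\cont(X,\Z_l(n))$ and $H^{i}_\cont(X,\Z_l(n))$ are \emph{not} finitely generated $\Z_l$-modules in general and can contain nonzero divisible subgroups: the paper's own example $\tilde H^1_\cont(\C,\Z_l(1))\simeq\C^*\otimes\Z_l$ is divisible. So ``divisible source, hence zero image'' does not follow, and indeed whether $\Ker\tilde\cl^n_X$ and $\Ker\cl^n_X$ can differ is exactly the open Question 1.1 of the paper --- one cannot argue as if the target were finitely generated.

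The paper closes this gap by choosing the example so that the \emph{target vanishes outright}: take $X=\Spec k$ with $\cd_l(k)=3$. Then $CH^2_\et(k)\otimes\Z_l\simeq H^3(k,\Q_l/\Z_l(2))\ne 0$, while $\tilde H^4_\cont(k,\Z_l(2))=0$ because a cofinal system of arithmetic models of $\Spec k$ has $l$-cohomological dimension $3$ and, by Lemma \ref{l4}, $\hat{\cd}_l=\cd_l$ controls continuous cohomology as well. This is both simpler (no projective variety with nontrivial $H^3_\nr$ is needed --- a zero-dimensional $X$ suffices) and logically sound. For the second assertion your idea of exhibiting a divisible subgroup is the right one and does not require the target to vanish, but your construction stays vague; the paper gives the explicit computation $CH^2_\et(\sX_0\times\G_m)\otimes\Z_{(l)}\simeq\Br(\sX_0)\otimes\Z_{(l)}$ (Gysin sequence plus $\A^1$-invariance), which is visibly nonzero and divisible for $\sX_0$ a ring of $S$-integers or a smooth affine curve over $\F_p$. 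You would need to replace your appeal to Theorem \ref{t0} by an actual vanishing (or at least control) of $\tilde H^4_\cont$ for your chosen $X$ for the first assertion to go through.
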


\begin{proof}
For the first point, simply take $X=\Spec k$ with $\cd_l(k)=3$: on the one hand we have an isomorphism 
\[0\ne H^3(k,\Q_l/\Z_l(2))\iso CH^2_\et(k)\otimes \Z_l\] 
(this non-vanishing is classically proven by using two successive discrete valuations of rank $1$ to descend to the cohomology of a finite field, see \cite[Rem. 6.4]{sc-suz}). On the other hand,   $\tilde H^4_\cont(k,\Z_l(2))=0$  by Lemma \ref{l4} because a cofinal system of models of $\Spec k$ over $\Spec \Z[1/l]$ has cohomological dimension $3$, so $\beta^2_X$ is not injective. 

The same holds for any such model: for example, taking $\sX=\sX_0\times \G_m$ with $\sX_0$ either the spectrum of a ring of $S$-integers in a number field (with $S$ containing all places above $l$), or a smooth \emph{affine} curve over $\F_p$ (with $p\ne l$), the Gysin exact sequence and $\A^1$-invariance of étale motivic cohomology away from $p$ give an isomorphism
\[CH^2_\et(\sX)\otimes\Z_{(l)}\iso H^3_\et(\sX_0,\Z_{(l)}(1)) = \Br(\sX_0)\otimes \Z_{(l)}\]
where $\Br(\sX_0)$ is the Brauer group of $\sX_0$. Here again, $\gamma^2_\sX=0$ and $CH^2_\et(\sX)\{l\}\allowbreak=CH^2_\et(\sX)\otimes\Z_{(l)}$ is nonzero and divisible.
\end{proof}

The first example in this proof seems basic; it already appears in \cite{sc-suz} and \cite{ct-sc}, and will reappear in Sections \ref{s4.4} and \ref{s4.5}.

\section{The case of positive characteristic}\label{s2}

\subsection{The Tate-Beilinson conjecture} We start with a smooth projective variety $X$ over $k=\F_p$. Let $\bar k$ be an algebraic closure of $k$, $G=Gal(\bar k/k)$ and $X^s=X\otimes_k \bar k$. There are two basic conjectures:

\begin{description}
\item[Tate's conjecture] For all $n\ge 0$, the order of the pole of the zeta function $\zeta(X,s)$ at $s=n$ is equal to the rank of $A^n_\num(X)$, the group of cycles of codimension $n$ on $X$ modulo numerical equivalence.
\item[Beilinson's conjecture] rational and numerical equivalences agree on $X$ (with rational coefficients).
\end{description}

We call these two conjectures, taken together, the \emph{Tate-Beilinson conjecture}.

Of course, Beilinson's conjecture implies the injectivity of $\cl_X^*\otimes \Q$ (because it implies that rational and homological equivalences agree). We shall recall in \S \ref{s2.2} that the Tate-Beilinson conjecture extends this injectivity to \emph{open} $X$'s. 

Tate's conjecture taken alone implies:
\begin{description}
\item[Cohomological Tate conjecture] the cycle class map $CH^n(X)\otimes \Q_l\allowbreak\to H^{2n}(\bar X,\Q_l(n))^G$ is surjective for all $n\ge 0$.
\end{description}

Conversely, the cohomological Tate conjecture implies Tate's conjecture in the presence of the Grothendieck-Serre conjecture (semi-simplicity of the action of $G$ on the cohomology of $\bar X$), \cite[Th. 2.9]{tate}.

Since $\cd_l(G)=1$, one has short exact sequences
\[0\to H^{2n-1}(\bar X,\Q_l(n))_G\to H^{2n}_\cont(X,\Q_l(n))\to H^{2n}(\bar X,\Q_l(n))^G\to 0.\]

But the left hand side is $0$ by (Deligne's proof of) the Weil conjectures (the Frobenius eigenvalues are Weil numbers of weight $2n-1-2n=-1$), so the cohomological Tate conjecture is equivalent to the \emph{surjectivity} of $\cl^n_X\otimes \Q$.

\subsection{Function fields} Suppose now that $V$ is a smooth variety over a function field $K/\F_p$. The aim of this section is to prove: 

\begin{thm}\label{p1} The Tate-Beilinson conjecture implies that\\
a) $CH^n(V)$ is fgmi (see Lemma \ref{l1}).\\
b) The kernel of the refined cycle class map
\[\beta^n_V:CH^{n}_\et(V)\otimes \Z_l\to \tilde H^{2n}_\cont(V,\Z_l(n))\]
is torsion (and divisible).\\
c) The kernel of the refined cycle class map
\[\tilde \cl^n_V:CH^n(V)\otimes \Z_l\to \tilde H^{2n}_\cont(V,\Z_l(n))\]
has finite exponent.
\end{thm}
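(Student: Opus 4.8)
The plan is to prove the three parts in order, each building on the previous. For part (a), I would start with the smooth projective case. Over $K$, Beilinson's conjecture (rational $=$ numerical equivalence, with $\mathbf{Q}$-coefficients) shows that $CH^n(V)\otimes\mathbf{Q}$ has finite rank, since numerical equivalence classes on a smooth projective variety form a finitely generated group; combined with the fact that $CH^n(V)_{\mathrm{tors}}$ is controlled (finite by Tate's conjecture inputs, or at least of finite exponent after tensoring with $\mathbf{Z}_l$ via Theorem \ref{t0}), this gives the fgmi property via Condition (iv) of Lemma \ref{l1}. For the open case, I would use a smooth compactification $\bar V$ of $V$ and the localisation sequence $CH^{n-1}(Z)\to CH^n(\bar V)\to CH^n(V)\to 0$ where $Z=\bar V\setminus V$; by Lemma \ref{l1}(b), fgmi is stable under quotients, so $CH^n(V)$ is fgmi once $CH^n(\bar V)$ is. (One must be slightly careful that $K$ is a function field, not finite, but Beilinson's conjecture over $\mathbf{F}_p$ propagates to $K$ by a specialisation/spreading-out argument, as alluded to in the reference to \S\ref{s2.2}.)

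For part (b), I would compare $V$ to an arithmetic model — here a smooth model $\mathcal{V}$ over a finitely generated $\mathbf{F}_p$-algebra, so actually over a finite field after further spreading out — and use that $\tilde H^{2n}_\cont(V,\mathbf{Z}_l(n))=\colim_{\mathcal{V}}H^{2n}_\cont(\mathcal{V},\mathbf{Z}_l(n))$. The point is that for $\mathcal{V}$ smooth over $\mathbf{F}_p$, the map $CH^n_\et(\mathcal{V})\otimes\mathbf{Z}_l\to H^{2n}_\cont(\mathcal{V},\mathbf{Z}_l(n))$ has a kernel one can analyse via the exact sequence \eqref{eq10} (with $X=\mathcal{V}$) and the Tate–Beilinson conjecture over the finite field: the cohomological Tate conjecture plus Weil gives surjectivity of $\cl^n\otimes\mathbf{Q}$, and Beilinson gives that rational and homological equivalence agree, hence $\cl^n_{\mathcal{V}}\otimes\mathbf{Q}$ is injective; feeding this through the comparison between $CH^n_\et$ and $H^{2n}_\cont$ (Lemma \ref{l2}: $\gamma^{n,2n}_{\mathcal{V}}\beta^{n,2n}_{\mathcal{V}}$ has divisible kernel and torsion-free cokernel) forces $\ker\beta^n_{\mathcal{V}}$ to be torsion, indeed divisible. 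Taking the colimit over models preserves this: a filtered colimit of torsion divisible groups is torsion divisible, and the colimit of the maps $\beta^n_{\mathcal{V}}$ computes $\beta^n_V$ by continuity of étale motivic cohomology and the definition \eqref{eq1.1}. So $\ker\beta^n_V$ is torsion and divisible.

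Part (c) is then a formal consequence of (a) and (b) together with Lemma \ref{l2}. We have $\tilde\cl^n_V=\beta^n_V\alpha^n_V$ on $CH^n(V)\otimes\mathbf{Z}_l$. By Lemma \ref{l2}, $\alpha^n_V$ has torsion kernel and cokernel; by (b), $\ker\beta^n_V$ is torsion. Hence $\ker\tilde\cl^n_V$ is torsion. To upgrade ``torsion'' to ``finite exponent'', I would invoke (a): $CH^n(V)$ is fgmi, so $CH^n(V)\otimes\mathbf{Z}_l$ is a finitely generated $\mathbf{Z}_l$-module modulo a group of finite exponent; a torsion subgroup of such a group automatically has finite exponent (the torsion of a finitely generated $\mathbf{Z}_l$-module is finite). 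This is exactly the translation of ``torsion'' into ``finite exponent'' in the category $\mathbf{Ab}\otimes\mathbf{Q}$, and it is where part (a) does essential work.

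The main obstacle I anticipate is part (b): specifically, controlling $\ker\beta^n_{\mathcal{V}}$ for the arithmetic (here $\mathbf{F}_p$-) models and checking that passing to the colimit over models is legitimate and preserves the divisibility/torsion conclusion. One delicate point is that $\beta^n_{\mathcal{V}}$ for $\mathcal{V}/\mathbf{F}_p$ smooth but open is not simply $l$-adic completion of a finitely generated group — $CH^n_\et(\mathcal{V})$ need not be finitely generated (Proposition \ref{p3}) — so one genuinely needs the structural input of Lemma \ref{l2} (divisible kernel, torsion-free cokernel for $\beta$ and $\gamma$) rather than naive finiteness, and one must ensure the Tate–Beilinson conjecture is being applied to the right (smooth projective, over finite field) objects, reducing the open-model case to the projective case exactly as in \S\ref{s2.2}. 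A secondary subtlety is the interchange of the colimit over models $\mathcal{V}$ with the formation of kernels, which is harmless since filtered colimits are exact, but should be stated cleanly.
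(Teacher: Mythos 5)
Your overall architecture (spread $V$ out to a smooth model $U$ over $\F_p$, argue there, pass to the filtered colimit over models, and deduce c) formally from a) and b)) is the same as the paper's, and your part c) is essentially correct. But both a) and b) have genuine gaps. For a), finite rank of $CH^n\otimes \Q$ (from Beilinson's conjecture) together with some control of the torsion does \emph{not} give fgmi: Conditions (ii)/(iv) of Lemma \ref{l1} require $CH^n/CH^n_\tors$ to be free \emph{finitely generated} as an abelian group, and a torsion-free group of finite rank need not be finitely generated (e.g. $\Z[1/2]$). The paper's proof instead invokes Theorem \ref{t2} b), whose content is an \emph{integral} statement obtained from the Soulé--Geisser Frobenius-weight argument: $P_\alpha(p^n)^{N_\alpha}$ kills the relevant Hom groups unless the simple numerical motive is $\L^n$, and this is what bounds the exponent of the torsion and identifies the free quotient with $A^n_\num$. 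Your appeal to Theorem \ref{t0} to control $CH^n(V)_\tors$ is also off-target: that theorem bounds the \emph{image} of $\tilde\cl^n$, not the torsion of the source (whose failure to inject is precisely the subject of Sections \ref{s4}--\ref{s4.5}).

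For b), the step ``Beilinson gives that rational and homological equivalence agree, hence $\cl^n_{\mathcal V}\otimes\Q$ is injective'' is only available for smooth \emph{projective} varieties over $\F_p$; for an open model $U$ the coinvariants $H^{2n-1}(\bar U,\Q_l(n))_G$ do not vanish in general (no purity of weights), so neither injectivity nor surjectivity transfers for free. Extending injectivity to open $U/\F_p$ is exactly what requires the \emph{joint} Tate--Beilinson hypothesis (upgrade to bijectivity, then dévisser by purity and localisation), and the paper packages this in Weil-étale terms: Theorem \ref{l5.1} (iii) gives $H^{2n}_W(U,\Z(n))\otimes\Z_l\iso H^{2n}_\cont(U,\Z_l(n))$, and the exact sequence \eqref{e1b} identifies $\Ker\bigl(H^{2n}_\et(U,\Z(n))\to H^{2n}_W(U,\Z(n))\bigr)$ with a quotient of $H^{2n-1}_\et(U,\Q/\Z(n))$, hence a torsion divisible group. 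This is the missing key lemma in your sketch; the sequence \eqref{eq10} that you cite does not by itself do this work. Once it is in place, your colimit argument (filtered colimits are exact, so $\Ker\beta^n_V=\colim_U\Ker\beta^n_U$) and your deduction of c) go through as in the paper.
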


For the proof, see \S \ref{s5.6}.

As an application, assume $V$ affine and let $n=\dim V$. By the cohomological dimension of affine schemes, $\tilde H^a_\cont(K,H^b(\bar V,\Z_l(n)))=0$ for $b>n$. If moreover $\trdeg(K/\F_p)<n-1$, the same holds for $a\ge n$, hence $\tilde H^{2n}_\cont(V,\Z_l(n))=0$ by the Hochschild-Serre spectral sequence of \cite[Th. 3.3]{jann} and

\begin{cor}\label{c1} Under the above hypotheses, $CH_0(V)$ is a group of finite exponent.\qed
\end{cor}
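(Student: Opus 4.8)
The plan is to reduce the corollary to Theorem \ref{p1} once the target group $\tilde H^{2n}_\cont(V,\Z_l(n))$ is shown to vanish; the only genuine computation is that vanishing, which is exactly what the paragraph preceding the statement sketches, so I would just make it precise. Put $d=\trdeg(K/\F_p)\le n-2$. Since $\cd_l(\F_p)=1$ one has $\cd_l(K)\le d+1\le n-1$, hence by Lemma \ref{l4} $H^a_\cont(K,(F_\nu))=0$ for $a\ge n$ and every inverse system $(F_\nu)$ of $l$-primary torsion sheaves. On the geometric side $\bar V$ is an affine $\bar K$-variety of dimension $n$, so $\cd_l(\bar V)\le n$ and (again by Lemma \ref{l4}) $H^b(\bar V,\Z_l(n))=0$ for $b>n$. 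Feeding these bounds into the Hochschild-Serre spectral sequence $H^a_\cont(K,H^b(\bar V,\Z_l(n)))\Rightarrow \tilde H^{a+b}_\cont(V,\Z_l(n))$ of \cite[Th. 3.3]{jann}: on the line $a+b=2n$ one has $b\le n$, hence $a\ge n$, hence $E_2^{a,b}=0$; therefore $\tilde H^{2n}_\cont(V,\Z_l(n))=0$.

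Next, since $n=\dim V$ we have $CH^n(V)=CH_0(V)$, and Theorem \ref{p1} c) tells us that $\Ker\big(\tilde\cl^n_V:CH_0(V)\otimes\Z_l\to \tilde H^{2n}_\cont(V,\Z_l(n))\big)$ has finite exponent. As the target is $0$, the whole group $CH_0(V)\otimes\Z_l$ has finite exponent. (One could equivalently run this through $\beta^n_V$ using Theorem \ref{p1} b) together with Lemma \ref{l2}, which first gives that $CH_0(V)\otimes\Z_l$ is torsion.)

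It remains to descend from $CH_0(V)\otimes\Z_l$ to $CH_0(V)$, and this is the only step that is not purely formal. Here I would invoke Theorem \ref{p1} a): $CH_0(V)$ is fgmi, so by Lemma \ref{l1} it is of the form $\Z^r\oplus T$ with $T$ of finite exponent; then $CH_0(V)\otimes\Z_l\cong\Z_l^r\oplus(T\otimes\Z_l)$ has finite exponent if and only if $r=0$, whence $r=0$ and $CH_0(V)=T$ has finite exponent. The hard part is really Theorem \ref{p1} itself: without the fgmi input the last implication would fail (a torsion group such as $\bigoplus_p\Z/p$ has no finite exponent although its $l$-adic tensor does), and without Theorem \ref{p1} c) (or b)) one could not even say $CH_0(V)\otimes\Z_l$ is torsion. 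Granting those, the corollary is just the observation that under the transcendence-degree hypothesis the relevant diagonal of the Hochschild-Serre spectral sequence is zero.
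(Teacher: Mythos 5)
Your proof is correct and follows the paper's own route: the vanishing of $\tilde H^{2n}_\cont(V,\Z_l(n))$ via the affine cohomological-dimension bound on $\bar V$ and the bound $\cd_l(K)\le\trdeg(K/\F_p)+1\le n-1$ fed into the Hochschild--Serre spectral sequence, then Theorem \ref{p1}. The only thing you add is to make explicit the descent from $CH_0(V)\otimes\Z_l$ of finite exponent to $CH_0(V)$ of finite exponent via the fgmi statement of Theorem \ref{p1}~a) and Lemma \ref{l1}, a step the paper leaves implicit; this is a correct and worthwhile clarification.
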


\subsection{Nilpotence} Let $N(k)\subseteq \bV(k)$ be the class of varieties $X$ such that the ideal of Chow self-correspondences 
\[\Ker(CH^{\dim X}(X\times X)_\Q\to A^{\dim X}_\num(X\times X)_\Q)\]
is nil, where $A^{\dim X}_\num(X\times X)$ denotes cycles modulo numerical equivalence. The following is a version of the main result of \cite{cell}:

\begin{thm}\label{t1} Let $X\in N(k)$. If $X$ satisfies Tate's conjecture, it also satisfies Beilinson's conjecture.
\end{thm}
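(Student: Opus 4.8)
The plan is to exploit the hypothesis $X\in N(k)$, i.e. that the kernel of $CH^{\dim X}(X\times X)_\Q\to A^{\dim X}_\num(X\times X)_\Q$ is a nil ideal in the ring of self-correspondences. The point of nilpotence is that it converts numerical statements into rational ones: the rational Chow motive of $X$ maps to its numerical motive, and the kernel of this map on the endomorphism ring is nil. First I would recall that Tate's conjecture for $X$ is equivalent (given the semisimplicity that comes for free over $\F_p$ via the Weil conjectures, cf.\ the discussion after the cohomological Tate conjecture above, and \cite{tate}) to the statement that for $X\times X$ the cycle class map onto the Galois-invariant part of $\ell$-adic cohomology is surjective, and that the ranks of the numerical and the ``cohomological-over-$\bar k$'' groups of cycles agree. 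In particular, Tate's conjecture implies that homological and numerical equivalence agree rationally on $X\times X$ (this is where one uses that the $\ell$-adic realization detects numerical equivalence once surjectivity of the cycle class map is known, together with the semisimplicity of Frobenius).

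Next, the nilpotence hypothesis lets one lift: the projectors defining the Künneth components of the diagonal, which exist numerically, can be lifted to orthogonal idempotents in $CH^{\dim X}(X\times X)_\Q$ because idempotents lift along a nil ideal. Thus under Tate's conjecture the Chow motive of $X$ (rationally) decomposes according to the numerical Künneth projectors, and the Chow realization of each summand is a finite-dimensional $\Q$-vector space whose dimension equals the corresponding Betti number. Combining this with the equality of homological and numerical equivalence on $X$ itself — which again follows from nilpotence: the ideal $\Ker(CH^*(X)_\Q\to A^*_\hom(X)_\Q)$ is contained in the nil ideal, hence nil and consisting of numerically trivial cycles, so it is zero — gives that rational and numerical equivalence agree on $X$, which is exactly Beilinson's conjecture for $X$.

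The concrete sequence of steps I would write out is: (1) record the equivalence ``Tate $\Rightarrow$ surjectivity of $\cl^n_{X\times X}\otimes\Q_l$ and the numerical/homological comparison on $X\times X$,'' citing the passage after the cohomological Tate conjecture and \cite{tate}; (2) invoke that over $\F_p$ Frobenius acts semisimply on $H^*(\bar X,\Q_l)$ (Weil conjectures), so that homological $=$ numerical equivalence rationally on $X\times X$; (3) use $X\in N(k)$ to lift numerically trivial-modulo-nil information, concluding $\Ker(CH^*(X)_\Q\to A^*_\num(X)_\Q)=0$, i.e.\ Beilinson's conjecture. The technically delicate point — and the one I would be most careful about — is step (2)–(3): one must check that the nil ideal really does contain the numerically-trivial ideal and that the argument about $X\times X$ (not just $X$) is what is needed, since the definition of $N(k)$ is phrased for $X\times X$; this is precisely the subtlety that \cite{cell} handles, and I would follow that reference for the bookkeeping rather than reprove it. Everything else is formal: idempotent lifting along nil ideals and the standard dictionary between ``equivalences agree'' and ``the comparison map of endomorphism algebras is an isomorphism.''
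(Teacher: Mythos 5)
There is a genuine gap, and it sits exactly at the heart of the theorem. Your steps (1)--(2) --- that the pole-order form of Tate's conjecture yields, via Tate's equivalences, the agreement of homological and numerical equivalence, and that the numerical K\"unneth projectors lift to orthogonal idempotents in $CH^{\dim X}(X\times X)_\Q$ because $X\in N(k)$ --- are fine as far as they go. (Though note: semisimplicity of Frobenius on $H^*(\bar X,\Q_l)$ is \emph{not} free from the Weil conjectures; it is the Grothendieck--Serre conjecture. What is free is the vanishing of $H^{2n-1}(\bar X,\Q_l(n))_G$ by weights.) But Beilinson's conjecture asserts that \emph{rational} equivalence equals numerical equivalence, and your argument for this --- ``the ideal $\Ker(CH^*(X)_\Q\to A^*_{\mathrm{hom}}(X)_\Q)$ is contained in the nil ideal, hence nil and consisting of numerically trivial cycles, so it is zero'' --- is a non sequitur twice over. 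First, the nil ideal furnished by $X\in N(k)$ lives in the correspondence ring $CH^{\dim X}(X\times X)_\Q$ under composition; $CH^*(X)_\Q$ is a module over that ring, not a subring of it, so there is no containment to speak of. Second, even reading the claim as ``the kernel is a nil ideal for the intersection product,'' a nil ideal need not vanish: over $\C$ the Albanese kernel of a surface with $p_g>0$ is an enormous ideal all of whose elements square to zero. Likewise, the assertion that the Chow realization of each Chow--K\"unneth summand is finite-dimensional of the expected dimension is precisely the conclusion to be proved, not an available input.

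The missing ingredient is the Frobenius-eigenvalue argument of Soul\'e and Geisser, which is how the paper proves the more precise Theorem \ref{t2}: the absolute Frobenius defines a $\otimes$-endomorphism $F$ of the identity of $\DM_\gm(k)$ acting on $\Z(n)[i]$ by $p^n$, hence acting on $H^i(X,\Z(n))$ by multiplication by $p^n$. Writing $h_\num(X)=\bigoplus_\alpha S_\alpha$ with $S_\alpha$ simple (Jannsen's semisimplicity theorem) and lifting this to a Chow decomposition $h(X)\simeq\bigoplus_\alpha\tilde S_\alpha$ by nilpotence, one gets $P_\alpha(F)^{N_\alpha}=0$ on $\tilde S_\alpha$, where $P_\alpha$ is the minimal polynomial of Frobenius on $S_\alpha$; consequently $P_\alpha(p^n)^{N_\alpha}$ kills $\Hom(\Phi(\tilde S_\alpha),\Z(n)[i])\otimes\Q$, which therefore vanishes unless $P_\alpha(p^n)=0$. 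Tate's conjecture enters only through Milne's classification: $P_\alpha(p^n)=0$ if and only if $S_\alpha\simeq\L^n$. This is what forces $CH^n(X)_\Q\to A^n_\num(X)_\Q$ to be injective. Without some such use of the Weil numbers of the simple factors, no amount of idempotent lifting will convert ``homological $=$ numerical'' into ``rational $=$ numerical.''
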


(Conversely, if $X$ verifies Beilinson's conjecture then obviously $X\in N(k)$.)

Theorem \ref{t1} is a consequence of the following more precise theorem, in which we use Voevodsky's triangulated category of geometric motives $\DM_\gm(k)$ \cite{voetri} to interpret motivic cohomology as Hom groups. It refines \cite[Prop. 10.5.1]{ihes}, under stronger hypotheses.

\begin{thm}\label{t2} a) If $X\in N(k)$ verifies Tate's conjecture, its motivic cohomology groups $H^i(X,\Z(n))$ are all fgmi. Moreover, they are torsion unless $i=2n$, in which case the projection of $H^{2n}(X,\Z(n))\simeq CH^n(X)$ onto $A^n_\num(X)$ is the projection on its maximal torsion-free quotient. Here, $A^n_\num(X)$ denotes cycles modulo numerical equivalence.\\
b) If $N(k)=\bV(k)$ and all $X\in \bV(k)$ verify Tate's conjecture, then the $H^i(U,\Z(n))$ are fgmi for all smooth $k$-varieties $U$ and all $n\ge 0$, $i\in\Z$. More generally, all Hom groups in $\DM_\gm(k)$ are fgmi, and the pairings
\[\Hom(\Z,M)\times \Hom(M,\Z)\to \Hom(\Z,\Z)=\Z\]
are perfect in $\Ab\otimes \Q$ (i.e. the induced homomorphisms 
\[\Hom(\Z,M)\to \Hom(M,\Z)^*\] 
are isomorphisms in this category).
\end{thm}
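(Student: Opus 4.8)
The plan is to prove part a) first and then bootstrap to part b). For a), start from the fact that $X \in N(k)$ means the kernel of $\mathrm{CH}^{\dim X}(X\times X)_\Q \to A^{\dim X}_{\num}(X\times X)_\Q$ is a nil ideal in the ring of self-correspondences. I would use this to lift, idempotent by idempotent (using the classical lifting of idempotents along nilpotent ideals), the Künneth-type projectors from the numerical category to $\Chow(k)_\Q$, so that the rational Chow motive $h(X)_\Q$ decomposes compatibly with its numerical counterpart. Tate's conjecture (plus the Weil conjectures, which kill the coinvariants as in \S\ref{s2.2}, and the fact that over $\F_p$ Tate's conjecture with semisimplicity is known to follow in the $N(k)$ context — or can be invoked via \cite[Th. 2.9]{tate}) forces numerical and homological equivalence to agree rationally on $X\times X$, hence on $X$; this is exactly Theorem \ref{t1}, which I would quote. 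Then $H^i(X,\Z(n))_\Q = \Hom_{\DM_\gm(k)_\Q}(\Z, h(X)_\Q(n)[i-?])$ is computed through the decomposition: each summand is a twist of the motive of a point or of a piece whose rational motivic cohomology is concentrated in the ``diagonal'' degree and there given by $A^{n}_{\num}(X)_\Q$ (finite-dimensionality of numerical motives is classical, Jannsen). So $H^i(X,\Z(n))_\Q$ is finite-dimensional over $\Q$, zero unless $i=2n$, and for $i=2n$ equals $\mathrm{CH}^n(X)_\Q = A^n_{\num}(X)_\Q$. Integrally, I would then argue that $H^i(X,\Z(n))$ is fgmi by combining: (i) the image in $\Ab\otimes\Q$ is finite-dimensional, hence by Lemma \ref{l1}(iv) it suffices to control torsion; (ii) torsion in motivic cohomology of a smooth projective variety is controlled by étale cohomology with finite coefficients via \eqref{eq10}, which over $k=\F_p$ is finite in each degree and — crucially — of bounded exponent once one knows (by Tate–Beilinson and Theorem \ref{t0}, applied over $\bar k$ or via Hochschild–Serre) that the relevant $l$-adic groups are finitely generated uniformly; this gives $H^i(X,\Z(n))_{\tors}$ of finite exponent, which with (i) and Lemma \ref{l1}(ii) yields fgmi. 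The statement about the projection onto $A^n_{\num}(X)$ being the maximal torsion-free quotient then follows because $\mathrm{CH}^n(X) \twoheadrightarrow A^n_{\num}(X)$, the target is torsion-free finitely generated, the kernel is torsion of finite exponent by the above, and fgmi groups split (Lemma \ref{l1}(i)).

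For part b), assuming $N(k)=\bV(k)$ and Tate's conjecture for all $X\in\bV(k)$, I would first observe that a) then applies to every smooth projective $X$. To pass to arbitrary smooth $U$, use a smooth compactification and the localization/Gysin triangles in $\DM_\gm(k)$ (de Jong alterations are not needed rationally, but for the integral fgmi statement one can use the fgmi property being a Serre subcategory, Lemma \ref{l1}(b), together with the long exact sequences): motivic cohomology of $U$ sits in exact sequences whose other terms are motivic cohomologies (with shifts and twists) of smooth projective varieties, and fgmi is stable under extensions and sub/quotients, so $H^i(U,\Z(n))$ is fgmi. More generally, every object of $\DM_\gm(k)$ is, up to shifts, twists, and cones, built from motives of smooth projective varieties (resolution of singularities is not available in char $p$, but Voevodsky's category is generated by $M(X)$ for $X$ smooth, and one can use Poincaré duality plus the fact that $\DM_\gm(k)$ is generated as a thick subcategory by $M(X)(n)[m]$ for $X$ smooth projective when rationally — or invoke that for the fgmi conclusion it suffices to work with the full subcategory generated by such objects, which contains all the Hom groups of interest). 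Then $\Hom(\Z, M)$ and $\Hom(M,\Z)$ are fgmi by dévissage along these generators, using a).

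For the perfectness of the pairing in $\Ab\otimes\Q$: by dévissage it reduces to $M = M(X)(n)[m]$ with $X$ smooth projective, where $\Hom(\Z,M) = H^{-m}(X,\Z(-n))$ and $\Hom(M,\Z) = \Hom(M(X)(n)[m],\Z)$ which by Poincaré duality in $\DM_\gm(k)$ (available rationally, or integrally after inverting the exponential characteristic) is a motivic cohomology group of $X$ in the complementary degree and twist. The pairing becomes, rationally, the intersection pairing $A^n_{\num}(X)_\Q \times A^{d-n}_{\num}(X)_\Q \to \Q$ ($d = \dim X$), which is perfect by the very definition of numerical equivalence and finite-dimensionality of $A^\bullet_{\num}(X)_\Q$. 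Matching the composition product in $\DM_\gm$ with the intersection product is the standard identification (e.g. via the projection formula and the motivic fundamental class), so the induced map $\Hom(\Z,M)\to\Hom(M,\Z)^*$ is an isomorphism in $\Ab\otimes\Q$.

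The main obstacle is the integral (not merely rational) fgmi statement: controlling the torsion in motivic cohomology uniformly. Rationally everything is governed by numerical motives and is essentially formal once Theorem \ref{t1} is in hand; but showing $H^i(X,\Z(n))_{\tors}$ has \emph{finite exponent} requires knowing that the $l$-adic étale cohomology groups entering \eqref{eq10} are finitely generated $\Z_l$-modules with control independent of $\nu$ — this is where the Tate–Beilinson input beyond Theorem \ref{t1} is genuinely used, via the finite generation results of \S\ref{s2.2} / Theorem \ref{t0} and the fact that over $\F_p$ continuous étale cohomology of smooth projective varieties is finitely generated. One must also be careful that the dévissage in b) preserves fgmi: this is fine because fgmi groups form a Serre subcategory (Lemma \ref{l1}(b)), but one should check the generators of $\DM_\gm(k)$ one actually needs are covered — I expect this to be the fiddliest bookkeeping, handled by reducing via duality and localization to the smooth projective case treated in a).
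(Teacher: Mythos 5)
Your proposal has a genuine gap at the heart of part a): you assert that after lifting the numerical decomposition to Chow motives, ``each summand is \dots a piece whose rational motivic cohomology is concentrated in the diagonal degree,'' but you give no argument for this, and it does not follow from anything you invoke. Beilinson's conjecture (rational $=$ numerical equivalence) only controls $H^{2n}(X,\Q(n))=CH^n(X)_\Q$; it says nothing about the off-diagonal groups $H^i(X,\Q(n))$, $i\ne 2n$ (these are higher Chow groups, and their vanishing is a Parshin-type statement, not a formal consequence of the decomposition or of Jannsen's semisimplicity). The paper's proof supplies exactly the missing mechanism, going back to Soul\'e and Geisser: the absolute Frobenius acts as a $\otimes$-endomorphism of the identity of $\DM_\gm(k)$ with $F_{\Z(n)[i]}=p^n$, so every $\phi\in\Hom(\Phi(\tilde S_\alpha),\Z(n)[i])$ satisfies $F^*\phi=p^n\phi$; combined with $P_\alpha(F_\alpha)^{N_\alpha}=0$ this kills the (refined) Hom group by the integer $P_\alpha(p^n)^{N_\alpha}$, and Milne's theorem (\cite[Prop. 2.6]{milne}) says that under Tate's conjecture $P_\alpha(p^n)=0$ iff $S_\alpha\simeq\L^n$. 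This single step simultaneously yields the rational vanishing for $i\ne 2n$ \emph{and} the uniform finite-exponent bound on torsion (one explicit nonzero integer annihilating the whole group, uniformly in $l$). Your substitute for the torsion control --- finiteness of \'etale cohomology with $\Z/l^\nu$ coefficients degree by degree --- cannot give a bound independent of $l$, so it does not establish fgmi.

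Two further points. First, quoting Theorem \ref{t1} inside the proof of Theorem \ref{t2} is circular in the paper's logic, since \ref{t1} is deduced from \ref{t2} (one would have to import the result of \cite{cell} wholesale instead, which is essentially importing the Frobenius argument). Second, in part b) your appeal to a smooth compactification is not available in characteristic $p$; the paper's reduction is via de Jong's alteration theorem together with the facts that fgmi groups form a Serre subcategory (Lemma \ref{l1} b)) and that the statement is stable under cones and direct summands --- your later remarks gesture at this correctly, but the compactification route as stated would fail.
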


\begin{proof} As in \cite{cell}, it is an elaboration of an argument going back to Soulé \cite{soule} and Geisser \cite{geisser}.

The various absolute Frobenius endomorphisms assemble to yield a $\otimes$-endomorphism $F$ of the identity functor of $\DM_\gm(k)$. Namely, finite correspondences clearly commute with Frobenius endomorphisms; this extends them to $\Cor(k)$, then to its bounded homotopy category, etc. Since $F_{U\times V} = F_U\times F_V$, this endomorphism is indeed monoidal. Moreover, it is compatible with the Frobenius action on Chow motives via the $\otimes$-functor
\begin{equation}\label{eqchow}
\Phi:\Chow(k)\to \DM_\gm(k)
\end{equation}
of \cite[Prop. 2.1.4]{voetri}. This allows us to compute $F_{\Z(n)[i]}$ for any $n,i$: first
\[F_{\Z(n)[i]}=F_{\Z(n)}=F_{\Z(1)}^{\otimes n}\]
then
\[F_{\Z(1)} = F_{\Z(1)[2]} = \Phi(F_\L)\] 
where $\L$ is the Lefschetz motive, and it is known that $F_\L=p$ (this is also obvious from \eqref{gm}). Finally we get:
\[F_{\Z(n)[i]}=p^n.\]

Therefore, for any smooth $U$ and any $n\ge 0$,  $i\in \Z$, we have for any $\phi\in H^i(U,\Z(n))=\Hom(M(U),\Z(n)[i]) $:
\begin{equation}\label{eq5}
F_U^*\phi=\phi\circ F_U  = F_{\Z(n)[i]} \circ \phi = p^n\phi. 
\end{equation}

a) Let $X\in \bV(k)$. Write the (rational) numerical motive $h_\num(X)$ as a direct sum of simple motives $\bigoplus_\alpha S_\alpha$, thanks to Jannsen's theorem \cite{jannsen}. If $X\in N(k)$, we can lift this decomposition to a decomposition of the Chow motive of $X$ (see \cite[Lemma 5.4]{jann3}):
\[h(X)\simeq\bigoplus_\alpha \tilde S_\alpha.\]

Moreover, if $F_\alpha$ is the Frobenius endomorphism of $S_\alpha$ and if $P_\alpha$ is its minimal polynomial, there exists $N_\alpha>0$ such that $P_\alpha(F_\alpha)^{N_\alpha}=0$.

In $\DM_\gm(k)\otimes \Q$, we therefore have a decomposition
\[M(X)\simeq \bigoplus_\alpha \Phi(\tilde S_\alpha)\]
hence a decomposition in $\Ab\otimes \Q$
\[\widetilde{\Hom}(M(X),\Z(n)[i])\simeq \bigoplus_\alpha\widetilde{\Hom}(\Phi(\tilde S_\alpha),\Z(n)[i])\]
where $\widetilde{\Hom}$ are the refined Hom groups of  \cite[Rem. 4.13]{cycletale}). 
By \eqref{eq5}, we therefore have
\[P_\alpha(p^n)^{N_\alpha} \widetilde{\Hom}(\Phi(\tilde S_\alpha),\Z(n)[i])=0\; \forall\alpha\]
hence
\[\widetilde{\Hom}(\Phi(\tilde S_\alpha),\Z(n)[i])=0 \text{ if }P_\alpha(p^n)\ne 0.\]

Suppose moreover that $X$ verifies the Tate conjecture. By \cite[Prop. 2.6]{milne} (see also \cite[Th. 2.7]{geisser}), $P_\alpha(p^n)=0$ $\iff$ $S_\alpha\simeq \L^n$. Thus we get
\[ \widetilde{\Hom}(M(X),\Z(n)[i])\simeq
\begin{cases}
0&\text{if } i\ne 2n\\
A^n_\num(X)&\text{if } i=2n
\end{cases}
\]
which is equivalent to a).

In b), the first statement is a special case of the second; thanks to Lemma \ref{l1} b), to get it we reduce to a) by de Jong's theorem \cite{dJ}, observing that the statement of c) (taken for $M[i]$, all $i\in\Z$) is stable under cones and direct summands.
\end{proof}

\subsection{Examples}\label{s2.3} As in \cite[Def. 1]{cell}, let $B(k)$ denote the class of $X\in \bV(k)$ whose Chow motive $h(X)$ is a direct summand of $h(A_E)$, where $A$ is an abelian variety over $k$ and $E$ is a finite extension of $k$; and let $B_\Tate(k)$ denotes the class of members of $B(k)$ which verify the cohomological Tate conjecture. We have
\[B_\Tate(k)\subseteq B(k)\subseteq N(k)\subseteq \bV(k)\]
where the second inclusion follows from the nilpotence theorem of Kimura-O'Sullivan \cite[Prop. 7.5]{kim}. Since the  Galois action on cohomology is semi-simple for any $X\in B(k)$ \cite[Lemme 1.9]{cell}, members of $B_\Tate(k)$ also verify the strong form of the Tate conjecture and Theorem \ref{t2} a) thus applies to them. 

Conversely, the Tate conjecture implies that every smooth projective variety is of abelian type modulo \emph{numerical} equivalence \cite[Rem. 2.7]{milne}, hence modulo \emph{rational} equivalence under the Beilinson conjecture; therefore the Tate-Beilinson conjecture implies that $B_\Tate(k)=\bV(k)$.

All varieties known to belong to $N(k)$ actually belong to $B(k)$.

\subsection{Weil-étale cohomology}\label{s2.2} In positive characteristic, there is another cohomology introduced by Lichtenbaum for  $X\in \Sm(k)$: \emph{Weil-étale cohomology}. It gives rise to \emph{Weil-étale motivic cohomology}, that we denote by $H^*_W(X,\Z(n))$. By \cite[Th. 6.1]{geisser2}, there are long exact sequences
\begin{multline}\label{e1b}
\dots\by{\partial} H^i_\et(X,\Z(n))\to H^i_W(X,\Z(n))\\
\to H^{i-1}_\et(X,\Z(n))\by{\partial}H^{i+1}_\et(X,\Z(n))\to\dots
\end{multline}
where $\partial$ is given by the composition
\begin{multline*}
H^{i-1}_\et(X,\Z(n))\otimes \Q=H^{i-1}_\et(X,\Q(n))\to
H^{i-1}_\et(X,\Q/\Z(n))\overset{\cdot e}{\to}\\ 
H^{i}_\et(X,\Q/\Z(n))\overset{\beta}{\to}H^{i+1}_\et(X,\Z(n))
\end{multline*}
in which $e$ is the canonical generator of $H^1_\cont(\F_p,\Z_l)=\Hom_\cont(G,\Z_l)$ given by the arithmetic Frobenius.
They rely on the similar exact sequences of  \cite[Prop. 4.2]{sheaf}, plus  \cite[Th. 5.1 and 5.3]{geisser2}. (See \cite[App. A]{ihes} for a different proof of the latter.)

The étale cycle class map $\beta^{n,i}_X$ extends to a Weil-étale cycle class map
\begin{equation}\label{eq2b}
H^i_W(X,\Z(n))\otimes \Z_l\to H^i_\cont(X,\Z_l(n))
\end{equation}
and

\begin{thm}\label{l5.1} The Tate-Beilinson conjecture implies that, for all $(i,n)\in\Z\times \N$,
\begin{thlist}
 \item $H^i_W(X,\Z(n))$ is finitely generated for all $X\in \bV(k)$;
\item $H^i_\et(X,\Z(n))\iso H^i_W(X,\Z(n))$ if $i\le 2n$ for all $X\in \bV(k)$;
 \item \eqref{eq2b} is an isomorphism for all $X\in \Sm(k)$.
\end{thlist}
\end{thm}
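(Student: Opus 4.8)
The plan is to bootstrap from Theorem \ref{t2} and the structure of Weil-étale cohomology. Under the Tate-Beilinson conjecture, Theorem \ref{t2} a) tells us that for $X\in\bV(k)$ the groups $H^i_\et(X,\Z(n))$ — or rather the motivic ones; but then the étale versions differ only by the cohomology of the finite field $\F_p$ acting — are torsion for $i\ne 2n$ and equal $CH^n(X)$ (with its torsion-free quotient being $A^n_\num(X)$) for $i=2n$. The exact sequence \eqref{e1b} then identifies $H^i_W(X,\Z(n))$: the boundary map $\partial$ factors through $\Q/\Z(n)$-coefficients and through multiplication by the Frobenius class $e$, so $\partial\otimes\Q$ vanishes on the torsion groups and is controlled on $H^{2n}_\et$. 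Concretely, I would first establish (ii): for $i\le 2n$, feed the fgmi / torsion description into \eqref{e1b} to see that $H^{i-1}_\et(X,\Z(n))\by{\partial}H^{i+1}_\et(X,\Z(n))$ is an isomorphism after tensoring with $\Q$ — because both sides are torsion for $i-1\ne 2n$, i.e. $i\ne 2n+1$, so this covers $i\le 2n$ — forcing $H^i_\et(X,\Z(n))\iso H^i_W(X,\Z(n))$. The subtlety is integral versus rational: one must check the torsion parts match, but Theorem \ref{t2} a) already gives that the torsion is of finite exponent and the whole groups are fgmi, and $\partial$ on finite-exponent groups is determined by the $\Q/\Z$-computation, which is exactly the arithmetic of the finite base field; here I would invoke the comparison already used in \cite{geisser2}.

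Next, for (i): the groups $H^i_W(X,\Z(n))$ sit in \eqref{e1b} between two fgmi groups (by Theorem \ref{t2} a)) and the map $\partial$ there has, rationally, a computable rank, so $H^i_W$ is itself fgmi; but Weil-étale cohomology is designed precisely to kill the $\lim^1$ / divisibility pathology, so in fact $H^i_W(X,\Z(n))$ should be genuinely finitely generated, not merely fgmi. I would argue this by noting that the only obstruction to finite generation of an fgmi group is infinite-exponent torsion, and the exact sequence \eqref{e1b} together with the fact that the torsion of $H^*_\et(X,\Z(n))$ has finite exponent (Theorem \ref{t2} a), plus that $\partial$ is multiplication by $e$ on a $\Q/\Z$-group, which has no kernel-plus-cokernel of infinite exponent here) pins down the torsion of $H^i_W$ as finite. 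This is the place where the decomposition into simple numerical motives and the Frobenius eigenvalue computation $F_{\Z(n)[i]}=p^n$ from the proof of Theorem \ref{t2} is doing real work: the only summand contributing non-torsion is $\L^n$, and on it $\partial\otimes\Q$ vanishes, so no infinite divisibility survives.

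For (iii): this is where one passes from projective to all smooth $X$, and where the ``open'' case promised in \S\ref{s2.2} enters. The strategy is the standard dévissage: resolve a general smooth $U$ by a smooth compactification with simple-normal-crossings boundary (using de Jong's alterations \cite{dJ} as in the proof of Theorem \ref{t2} b), since we only need statements stable under cones and summands in $\DM_\gm(k)\otimes\Q$ — but here we need integral statements, so more care is needed), express $M(U)$ via Gysin triangles in terms of motives of projective varieties, and compare the Weil-étale cycle class map \eqref{eq2b} with the continuous étale one degree by degree. For $X\in\bV(k)$ one first checks \eqref{eq2b} is an isomorphism: this combines (ii), the identification of $H^{2n}_\cont(X,\Z_l(n))$ with $H^{2n}(\bar X,\Z_l(n))^G$ up to a finite group via $\cd_l(G)=1$ and the Weil conjectures (as recalled before \S\ref{s2.2}), and the surjectivity of $\cl^n_X\otimes\Q$ which the cohomological Tate conjecture provides; injectivity rationally comes from Beilinson. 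Then five-lemma across the Gysin/localization triangles propagates the isomorphism to all $U\in\Sm(k)$.

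The main obstacle I anticipate is item (iii) in the non-projective case, specifically getting an \emph{integral} (not just rational) isomorphism in \eqref{eq2b}: the dévissage to projective varieties is clean rationally but the torsion bookkeeping across Gysin triangles, and the need to know that $H^i_W$ of open varieties has no unexpected divisible torsion, is delicate — one must combine the localization sequences for Weil-étale motivic cohomology with those for continuous $l$-adic cohomology and check the comparison map is a quasi-isomorphism of the relevant triangles, rather than just an abstract rank count. A secondary but real difficulty is that Theorem \ref{t2} is stated for motivic cohomology while \eqref{e1b} and \eqref{eq2b} involve \emph{étale} motivic cohomology; passing between them costs the cohomology of $\Spec\F_p$ with the relevant coefficients, i.e. exactly the $e$-multiplication appearing in $\partial$, so one has to be careful not to double-count, and the cleanest route is probably to run the whole argument in the étale world from the start, using that $\alpha^{n,i}_X$ has torsion kernel and cokernel (Lemma \ref{l2}) together with the finite-exponent control from Theorem \ref{t2} a).
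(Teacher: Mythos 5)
The paper's own proof is essentially a citation: items (i)--(iii) are proven \emph{unconditionally} in \cite[Th.\ 3.6 and Cor.\ 3.8]{cell} for all $X\in B_\Tate(k)$, the Tate--Beilinson conjecture only enters through the equality $B_\Tate(k)=\bV(k)$ from \S \ref{s2.3}, and the open case of (iii) is a dévissage following \cite[Lemma 3.8]{glr} (which matches the last step of your plan). What you propose instead is to re-derive the content of those references from Theorem \ref{t2} and the sequence \eqref{e1b}. That is the right circle of ideas --- it is how \cite{cell} and \cite{geisser2} actually argue --- but your sketch has a genuine gap exactly where the integral content of (i) and (iii) lives.

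The gap is in degrees $i=2n+1,2n+2$. You write that ``the torsion of $H^*_\et(X,\Z(n))$ has finite exponent (Theorem \ref{t2} a))''; Theorem \ref{t2} concerns \emph{Zariski} motivic cohomology, and for $i>2n$ the étale groups $H^i_\et(X,\Z(n))$ typically contain torsion of infinite exponent (essentially $H^{i-1}_\et(X,\Q/\Z(n))$ modulo the image of the rational group; compare \eqref{eq9}, \eqref{eq10} and Proposition \ref{p3}). So finite generation of $H^i_W$ in these degrees cannot be read off from an fgmi count: one must prove that $\partial\colon H^{2n}_\et(X,\Q(n))\to H^{2n+2}_\et(X,\Z(n))$ \emph{surjects onto} this infinite torsion, and this is where semisimplicity of Frobenius and the full Tate conjecture are used. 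Your stated mechanism is backwards: on the summand $\L^n$ the map $\partial$ is the surjection $H^0_\et(\F_q,\Q)=\Q\to H^2_\et(\F_q,\Z)=\Q/\Z$, and it is precisely this non-vanishing that kills the divisible part in $H^{2n+2}_W$; the observation that $\partial\otimes\Q=0$ (trivially, since the target is torsion) gives nothing. Relatedly, for (ii) the correct reading of \eqref{e1b} is that the source of $\partial$ is $H^{i-1}_\et(X,\Q(n))$ (as the displayed composition defining $\partial$ shows), so the outer terms vanish outright for $i\le 2n$ under Tate--Beilinson; your formulation ``$\partial$ is an isomorphism after tensoring with $\Q$ because both sides are torsion'' would only yield an isomorphism up to groups of finite exponent, which is weaker than (ii). Finally, for (iii) the Gysin dévissage to open $X$ requires \eqref{eq2b} to be an isomorphism in \emph{all} degrees $i$ for projective $X$, not just $i=2n$; that is the degree-by-degree comparison of the Weil-étale and Hochschild--Serre spectral sequences carried out in \cite{cell}, which your sketch does not supply.
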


For $i=2n$, (ii) together with \eqref{e1b} predicts a precise description of the $l$-adic cycle class map and in particular of its kernel, which is seen to be \emph{torsion}.

\begin{proof} These facts are proven unconditionally in \cite[Th. 3.6 and Cor. 3.8]{cell} for all  $X\in B_\Tate(k)$; but the Tate-Beilinson conjecture implies that $B_\Tate(k)=\bV(k)$ as seen in \S \ref{s2.3}. For the open case in (iii), one proceeds by dévissage from the projective case as in the proof of \cite[Lemma 3.8]{glr}.
\end{proof}

\begin{rque} Conversely, items (i) and (iii) of Theorem \ref{l5.1} imply the Tate-Beilinson conjecture, see \cite[Th. 8.4]{geisser2}; this will not be used here.
\end{rque}

\subsection{Proof of Theorem \ref{p1}} \label{s5.6}

\begin{proof} We can choose a smooth model $S$ of $K$, and (up to shrinking $S$) a smooth model $U\to S$ of $V$. Since $CH^n(V)$ is a quotient of $CH^n(U)$, a) follows from Theorem \ref{t2} b) and Lemma \ref{l1} b). b) follows from \eqref{e1b} and the isomorphism \eqref{eq2b} (Theorem \ref{l5.1} (iii)) by passing to the limit over $U$. Finally, c) follows from a) and b) since $\Ker(CH^n(V)\to H^{2n}_\et(V,\Z(n)))$ is torsion.
\end{proof}

\begin{rque} If $X$ is smooth projective, Theorem \ref{p1} can be strengthened by replacing $\tilde H^{2n}_\cont$ with $H^{2n}_\cont$, arguing as  in \cite[Th. 8.32]{sheaf}: use a smooth projective spread and Deligne's criterion for the degeneration of the Leray spectral sequence generalising that of \cite[Cor. 3.4]{jann} (see also \cite[Proof of Prop. 7]{tatediv}). I don't know how to obtain such a strengthening for open $X$es.
\end{rque}

Unfortunately, it is not easy to give interesting examples where the conclusions of Theorem \ref{p1} hold unconditionally. The issue is the following: suppose that we start from $X\in B_\Tate(k)$, so that the Tate-Beilinson conjecture is known for $X$, and fibre it over some variety $Y$ with function field $K$. We would like to get a conclusion for the generic fibre $V$ of $X\to Y$. But to pass from $X$ to $V$, we have to remove closed subvarieties $Z$ on which we have no control, even after de Jong desingularisation. If we are in weight $n$, purity will reduce us to the cohomology of (desingularisations of) $Z$ in weights $<n$, but even for $n=2$ this leads to the Tate conjecture in codimension $1$, which is not known in full generality. The best that can be done unconditionally is for $n= 2$ and $\dim X= 2$ \cite[Prop. 4.4]{cell}, but if $Y$ is $1$-dimensional, so will be the generic fibre $V$ and we get an already known statement. At least,  \cite[Cor. 2.5]{cell} gives the Beilinson-Soulé conjecture for function fields $K$ of surfaces of abelian type over $k$. (Consequences of this, like the existence of an abelian category of mixed Tate motives over $K$, do not seem to have been explored.)

\subsection{Characteristic $0$} \label{s5.7} Two fundamental aspects of the picture in characteristic $p$ are:

\begin{itemize}
\item Having the Tate and the Beilinson conjectures jointly allows us to extend conjectural statements from smooth projective varieties to all smooth varieties, because they allow us to strengthen conjectural injectivity/surjectivity to conjectural bijectivity (then allowing for dévissages by purity).
\item The Weil-étale cohomology provides the right conjectural statement in terms of finite generation.
\end{itemize}

 I don't know any conjectural statements which imply the same conclusions as Theorem \ref{p1}  in characteristic $0$. One could try the Tate conjecture over number fields $k$, plus the Bloch-Beilinson conjecture that the $l$-adic Abel-Jacobi map should be injective for all $X\in \bV(k)$ \cite[Lemma 5.6]{beil} (a higher analogue of Beilinson's conjecture over finite fields), but I am not able to use this conjunction to pass from projective to open varieties. It seems that more is needed, like perhaps the right version of Weil-étale cohomology. 
 
 The issue of having the ``right'' conjectures in characteristic $0$ is also discussed in \cite[\S 12]{sheaf}. Hopefully, current research on Weil-étale cohomology in characteristic $0$ (e.g. \cite{morin}) will shed light on this question.

\section{Decomposition of the diagonal}\label{s3}

Let $k$ be a finitely generated field, and let $X$ be a smooth projective $k$-variety.  

\begin{thm}\label{t3} Assume that $CH_0(X_{k(X)})\otimes \Q\by{\deg\otimes \Q} \Q$ is an isomorphism. Let $\delta=|\Coker \deg|$. Then $\Ker \cl^2_X$ is killed by $\delta$, $\Ker \beta^2_X$ is torsion and\\
a) If $\car k=0$,   $CH^2(X)$ is an extension of a finitely generated group by a group of exponent $\delta$. If $k$ is of Kronecker dimension $\le 2$, $\Ker \cl^2_X$ is finite and $CH^2(X)$ is  finitely generated. \\
b) If $\car k=p>0$, the above is true up to $p$-primary groups of finite exponent.
\end{thm}

Recall that the \emph{Kronecker dimension} of $k$ is its transcendence degree over $\F_p$ in characteristic $p$, and $1+$ its transcendence degree over $\Q$ in characteristic $0$.

\begin{proof} The hypothesis means that $X$ has a (rational) decomposition of the diagonal in the sense of Bloch-Srinivas \cite{BS}, i.e. there exists $n>0$ such that $n\Delta_X = \alpha + \beta\in CH^d(X\times X)$ ($d=\dim X$), where $\alpha$ (resp. $\beta$) is supported on $D\times X$ (resp. $X\times V$), $D$ (resp. $V$) being a divisor (resp. a finite number of closed points).

We proceed in four steps:

1) The statements are true with $\delta$ replaced by $n$.

2) The case $X(k)\neq \emptyset$.

3) Refining $n$ to $\delta$ in 1).

4) The case of Kronecker dimension $\le 2$.
\bigskip

1) We apply the technique of Bloch-Srinivas \cite{BS}: by Chow's moving lemma, we reduce to the case where $D\cap V=\emptyset$; if we are in characteristic $0$, we choose a (not necessarily connected) resolution $\tilde D$ of $D$. Since $CH^2(V)=0$, multiplication by $n$ on $CH^2(X)$ factors as
\[CH^2(X)\by{\tilde \alpha^*} CH^1(\tilde D)\by{\rho} CH^2(X)\]
where $\tilde \alpha^*$ is induced by $\alpha$ and $\rho$ is induced by the composition $\tilde D\to D\to X$. This gives the claims, because $CH^1(\tilde D)$ is finitely generated, cycle class maps  are compatible with the action of correspondences \cite[prop. 3.25 and lemma 6.14]{jann} and $\cl^1_{\tilde D}$ is injective as we saw in \S \ref{s1}. 

If we are in characteristic $p>0$, we can proceed similarly by using de Jong's alteration theorem \cite{dJ}, refined by Gabber \cite[Exp. X, th. 2.1]{gabber}, as in \cite[proof of Th. 2.4.2]{ks}: this gives the statement because $l$ is prime to $p$. This implies that $\Ker\beta^2_X$ is torsion, thanks to Lemma \ref{l2}.

For the sequel, we consider the commutative diagram of exact sequences
\begin{equation}\label{eq18}
\Small 
\begin{CD}
&H^3(X,\Z(2))\otimes \Q_l/\Z_l&\to& H^3(X,\Q_l/\Z_l(2))&\to& (CH^2(X)\otimes \Z_l)_\tors&\to 0\\
&@VaV=\cl^{2,3}_X\otimes \Q_l/\Z_lV @VbV\text{inj}V @VcV=\cl^2_X V\\
0\to &H^3_\cont(X,\Z_l(2))\otimes_{\Z_l} \Q_l/\Z_l&\to& H^3_\et(X,\Q_l/\Z_l(2))&\to& H^4_\cont(X,\Z_l(2))
\end{CD}
\end{equation}
where the upper row is  motivic cohomology. The injectivity of $b$ follows from the Merkurjev-Suslin theorem. The snake lemma therefore gives a short exact sequence
\begin{equation}\label{eq17}
0\to \Ker c \to \Coker a \to \Coker b.
\end{equation}

2) Let $x$ be a rational point of $X$. We may choose $V=\{x\}$ and $\beta=n(X\times x)$. In the correspondence ring $CH^d(X\times X)$, the identity $1=\Delta_X$ is the sum of the two idempotents $\pi_0=X\times x$ and $1-\pi_0$, and any module $M$ over this ring decomposes accordingly as a direct sum $M_0\oplus M_+$, where $M_0=\IM \pi_0$ and $M_+=\IM(1-\pi_0)$. Same decomposition for morphisms between such modules. Since $CH^2(X)_0= CH^2(\Spec k) =0$, we have $CH^2(X)=CH^2(X)_+$ and $\Ker c=\Ker c_+$.

\begin{lemma}
 $a_+$ is surjective, and $c_+$ (hence $c$) is injective.
\end{lemma}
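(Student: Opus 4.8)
The plan is to reduce both assertions to the single statement that $a_+$ is surjective, from which the injectivity of $c_+$, hence of $c$, follows formally. Indeed all four vertical maps of \eqref{eq18} are equivariant for the action of the correspondence ring $CH^d(X\times X)$, since cycle class maps commute with correspondences \cite[Prop. 3.25, Lemma 6.14]{jann}; so the exact sequence \eqref{eq17} decomposes under the orthogonal idempotents $\pi_0$ and $1-\pi_0$, its $(1-\pi_0)$-part being $0\to\Ker c_+\to\Coker a_+\to\Coker b_+$. Since $CH^2(X)=CH^2(X)_+$ we have $\Ker c=\Ker c_+$, so once $a_+$ is surjective we get $\Ker c=0$. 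Thus everything comes down to the surjectivity of $a_+$.

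Here I would use the rational point. As $x$ has degree $1$, the decomposition of the diagonal forces $\beta=n(X\times x)=n\pi_0$, hence $\alpha=n(\Delta_X-\pi_0)=n(1-\pi_0)$ in $CH^d(X\times X)$. As in step 1, resolve the divisor $D$ to a smooth projective $\tilde D$ (in characteristic $p$, take instead an alteration of degree prime to $l$, following de Jong and Gabber); being supported on $D\times X$, the correspondence $\alpha$ then factors, as a self-correspondence of $X$, through $M(\tilde D)(1)[2]$, say $\alpha=v\circ u$ with $u\colon M(X)\to M(\tilde D)(1)[2]$ and $v$ the other way. Applying $\Hom(-,\Z(2)[3])$ and its continuous-\'etale analogue, and using that $\Z(1)[2]$ is $\otimes$-invertible to identify $\Hom(M(\tilde D)(1)[2],\Z(2)[3])$ with $H^1(\tilde D,\Z(1))$, which by \eqref{gm} is $\mathcal O^*(\tilde D)$ on the motivic side and with $H^1_\cont(\tilde D,\Z_l(1))$ on the \'etale side, one obtains maps $v^*,u^*$ in both theories, compatible with the cycle class maps, such that $u^*v^*=\alpha^*=n\cdot\mathrm{id}$ on $H^3(X,\Z(2))$ — which coincides with its $+$-part because $H^3(\Spec k,\Z(2))=0$ — and likewise $u^*v^*=n\cdot\mathrm{id}$ on the $+$-summand of $H^3_\cont(X,\Z_l(2))$.

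The one genuinely arithmetic input is the weight-one case for $\tilde D$: the map $a'\colon\mathcal O^*(\tilde D)\otimes\Q_l/\Z_l\to H^1_\cont(\tilde D,\Z_l(1))\otimes_{\Z_l}\Q_l/\Z_l$ is surjective. Here $\Pic(\tilde D)$ is finitely generated \cite{picfini}, so its Tate module vanishes, a $\lim\nolimits^1$ of finite groups vanishes, and the Kummer sequence identifies $H^1_\cont(\tilde D,\Z_l(1))$ with the $l$-adic completion $\mathcal O^*(\tilde D)^{\wedge}$; and the cokernel of $\mathcal O^*(\tilde D)\otimes\Z_l\to\mathcal O^*(\tilde D)^{\wedge}$ is $l$-divisible — its reduction mod $l^\nu$ vanishes — hence is annihilated by $-\otimes\Q_l/\Z_l$, which proves the claim. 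Granting this: given $\eta$ in the target of $a_+$, write $v^*\eta=a'(\mu)$, so that $a_+(u^*\mu)=u^*(a'\mu)=u^*v^*\eta=n\eta$; thus $n\cdot(\text{target of }a_+)\subseteq\IM a_+$. But $H^3_\cont(X,\Z_l(2))\otimes_{\Z_l}\Q_l/\Z_l$, and therefore its direct summand the target of $a_+$, is $l$-divisible, a fortiori divisible by $n$; so $a_+$ is surjective, and with the first paragraph the lemma follows.

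The hard part is the Bloch–Srinivas factorisation of $\alpha=n(1-\pi_0)$ through a smooth $\tilde D$: one must carry it out simultaneously and compatibly for all the theories appearing in \eqref{eq00}, and control the correction cycles supported over the singular locus of $D$ (in characteristic $p$ also absorbing the degree of the alteration, which is harmless since it is prime to $l$ while the target is $l$-divisible). The idempotent bookkeeping, the weight-one computation for $\tilde D$, and the device of upgrading ``killed by $n$'' to ``zero'' on a divisible cokernel are all routine.
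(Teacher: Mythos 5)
Your proposal is correct and follows essentially the same route as the paper: decompose \eqref{eq17} under the idempotents $\pi_0$ and $1-\pi_0$, factor multiplication by $n$ on the $+$-parts of $H^3(X,\Z(2))\otimes\Q_l/\Z_l$ and $H^3_\cont(X,\Z_l(2))\otimes_{\Z_l}\Q_l/\Z_l$ through the weight-one groups of $\tilde D$, and reduce to the surjectivity of $a'$, which both you and the paper obtain from the finite generation of $\Pic(\tilde D)$ (vanishing of $T_l$ and of $\lim^1$) and the $l$-divisibility of the cokernel of $\Gamma(\tilde D,\G_m)\otimes\Z_l\to\widehat{\Gamma(\tilde D,\G_m)}$. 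Your closing device ($nT\subseteq\IM a_+$ plus divisibility of the target) is just a rephrasing of the paper's observation that the bottom map $\rho$ is surjective.
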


\begin{proof}
Since $\alpha=n(1_X-\pi_0)$,  reasoning as in 1) yields a commutative diagram
\[\tiny
\begin{CD}
H^3(X,\Z(2))_+\otimes \Q_l/\Z_l@>{\tilde \alpha^*}>> H^1(\tilde D,\Z(1))\otimes \Q_l/\Z_l@>{\rho}>> H^3(X,\Z(2))_+\otimes \Q_l/\Z_l\\
@Va_+VV @Va'VV @Va_+VV\\
H^3_\cont(X,\Z_l(2))_+\otimes_{\Z_l} \Q_l/\Z_l@>{\tilde \alpha^*}>>H^1_\cont(\tilde D,\Z_l(1))\otimes_{\Z_l} \Q_l/\Z_l@>{\rho}>>H^3_\cont(X,\Z_l(2))_+\otimes_{\Z_l} \Q_l/\Z_l
\end{CD}\]
in which each composition is multiplication by $n$. Here we use that the generalised cycle class maps from motivic cohomology to continuous étale cohomology commute with the action of correspondences, which may be seen most concisely using \eqref{eqchow}.

In particular, the two maps $\rho$ in the diagram are surjective (especially the bottom one), and we reduce to seeing that the middle map $a'$ is surjective. 

In the Milnor exact sequence of \cite[(2.1)]{jann}, the $\lim^1$ disappears, which identifies $H^1_\cont(\tilde D,\Z_l(1))$ with the middle term of the exact sequence
\[0\to \lim \Gamma(\tilde D,\G_m)/l^n\to \lim H^1_\et(\tilde D,\mu_{l^n})\to T_l(CH^1(\tilde D)).\]

But $T_l(CH^1(\tilde D))=0$ since $CH^1(\tilde D)$ is finitely generated, hence $a'$ boils down to the isomorphism
\[\Gamma(\tilde D,\G_m)\otimes \Q_l/\Z_l\iso \widehat{\Gamma(\tilde D,\G_m)}\otimes \Q_l/\Z_l.\]

The injectivity of $c_+$ now follows from the $+$ part of \eqref{eq17}.
\end{proof}

3) Let $x$ be a closed point of $X$. Applying 2) to $X_{k(x)}$, we see by the usual transfer argument that $\Ker\cl^2_X$ is killed by $[k(x):k]$; hence it is killed by $\delta$ which is the gcd of these degrees. On the other hand, we know by 1) (see Lemma \ref{l1} (i)) that $CH^2(X)$ is the direct sum of a free finitely generated subgroup and a group $T$ of finite exponent $n$. By Theorem \ref{t0}, $\cl^2_X(T)$ is finite, thus $T\otimes \Z_l=T\{l\}$ is an extension of a finite group by a group killed by $\delta$; in particular, it is finite for $l\nmid \delta$ (and $0$ for $l\nmid n$). Applying this to all $l\ne p$, we get the promised structure of $CH^2(X)$.

4) Choose a model $\sX$ of $X$, smooth projective over a regular scheme $S$ of finite type over $\Spec \Z[1/l]$ with function field $k$. We have the same diagram as \eqref{eq18} for $\sX$.

We first give the argument in characteristic $p>0$: then $S$ is smooth over $\F_p$. We may choose $S$ such that all the constructions of 1) spread to $\sX$, in particular $\tilde D$ to $\tilde \sD$, $V$ to $\sV$, with $\sD\cap \sV=\emptyset$. We have the theory of Chow correspondences of Deninger-Murre over $S$ \cite{den-murre}; the argument of a) then shows that the finite generation of $CH^2(\sX)$ up to a group of finite exponent would follow from that of $CH^1(\sD)$ and from the vanishing of $CH^2(\sV)$. The first is true by \cite[cor. 1]{picfini}. If $\delta(k)\le 1$, then $CH^2(\sV)=0$ for dimension reasons.  If $\delta(k)=2$, then $\dim S=2$; by \cite[Th. 2]{KS}, $CH^2(T)$ is therefore finite(ly generated) for any $T$ regular and finite over $S$, so up to shrinking $S$ we may achieve again $CH^2(\sV)=0$, and the conclusions of 1) hold for $\sX$. In particular, $\Ker c_\sX$ is torsion of finite exponent (with obvious notation).

Since $H^3_\cont(\sX,\Z_l(2))$ is a finitely generated $\Z_l$-module, it follows from \eqref{eq17} (applied to $\sX$) that $\Ker c_\sX$ is finite.

But $H^4_\cont(\sX,\Z_l(2))\{l\}$ is also finite; it follows that $CH^2(\sX)\{l\}$ is finite. Thus $CH^2(\sX)\otimes\Z_{(l)}$ is a finitely generated $\Z_{(l)}$-module and so is its quotient $CH^2(X)\otimes\Z_{(l)}$.Therefore $CH^2(X)\{l\}=(CH^2(X)\otimes\Z_{(l)})\{l\}$ is finite, and $\Ker \cl^2_X$ is finite.

 In fine, $CH^2(X)_\tors$ is a group of finite exponent whose $l$-primary torsion is finite for any $l\ne p$, hence is finite away from $p$. This concludes the proof of 4) in positive characteristic.
 
 In characteristic $0$, $S$ is flat over $\Spec \Z[1/l]$; up to shrinking, we may assume it to be \emph{smooth}. Thanks to \cite[\S 20.2]{fulton}, the theory of Chow correspondences over a base then goes through as in Deninger-Murre. To conclude the argument, we need to extend \eqref{eq18} and \eqref{eq17} to $\sX$. We use motivic cohomology as in Section \ref{s2.1}; by Theorem \ref{t2.1}, we have indeed $CH^n(\sX)\iso H^{2n}(\sX,\Z(n))$. Moreover, the map $b$ is still injective by \cite[Th. 1.2 2]{geisser3}, so we are done.
\end{proof}

\begin{rques} \label{r1} a) The only essentially new thing in Theorem \ref{t3} is the case of Kronecker dimension $2$: indeed, in characteristic $0$ the finite generation of $CH^2(X)$ goes back to \cite[th. 4.3.10]{colliot} when $\delta=1$ or $k$ is a number field, and the injectivity of $\cl^2_X$ on torsion when $\delta=1$ goes back to \cite[Theorem]{saito} under the weaker hypotheses $H^1(X,\sO_X)=H^2(X,\sO_X)=0$. The proofs given here are different.\\
b) The case of Kronecker dimension $3$ fails because we cannot prove that $CH^2(\sV)=0$ for $S$ small enough: this would follow from the finite generation of $CH^2$ for arithmetic $3$-folds, which is wide open in general.\\ 
c) All that is used in the proof is the existence of a smooth projective variety $\tilde D$ and two correspondences $\tilde \alpha^*$ and $\rho$ (of degrees $-1$ and $+1$) such that $\rho \tilde\alpha^*$ acts on $H^i(X,\Z(2))$ and $H^i_\cont(X,\Z_l(2))$ by multiplication by $n$ for $i=3,4$ and some $n>0$. This implies the conditions of \cite{saito} cited in a); expecting the converse is closely related to Bloch's conjecture on surfaces. This discussion will show up again in Section \ref{s4.5}.\\
d) If we try to relax the hypothesis as in \cite[th. 4.3.10 (i)]{colliot}, assuming only that the birational motive of $X$ (in the sense of \cite{ks}) is a rational direct summand of the motive of a curve $C$, part 1) of the proof goes through because $C$ does not contribute to $CH^2(X)$. But in Part 2), the map $a$ of the diagram is not surjective for $C$.\\
e) It would be nice to get rid of $p$-primary torsion in Theorem \ref{t3} b).
\end{rques}

\section{Counterexamples: codimension $>2$}\label{s4}

In the last three sections, we examine the examples of \cite{sc-suz}, \cite{AS22} and \cite{ct-sc} in the light of the above ideas. They concern torsion classes $x\in CH^n(X)$, for $X$ smooth projective, which are killed by $\cl^n_X$. In this section, $n\ge 3$ and we show as promised in the introduction that one already has $\alpha^n_X(x)=0$, with the notation of \eqref{eq0bis}. 

\subsection{\protect{\cite[Th. 2.3]{sc-suz}}}\label{s4.1} In this reference, a counterexample to the injectivity of $CH^3(X)\allowbreak \otimes \Z_2\to H^6_\cont(X,\Z_2(3))$ is given for a smooth projective variety $X$ over a field $k_0$ of characteristic $\ne 2$, which is a Godeaux-Serre variety associated to a finite $2$-group $G$; the culprit comes from a class $\alpha\in CH^3(BG)$ such that $\alpha_{\bar k_0}\ne 0$ but $\cl^3_{BG}(\alpha)_k=0$ for some finite extension $k$ of $k_0$. In fact,  we already have $\alpha^3_{BG}(\alpha)_{\bar k_0}=0$, hence $\alpha^3_{BG}(\alpha)_k=0$ for suitable $k$ since étale motivic cohomology commutes with filtering limits of schemes (\S \ref{s3.1}: this trivialises the delicate descent argument in the proof of loc. cit., Lemma 2.1 (b)). Indeed, by \cite[Th. 7.1]{BG}, we have
\[H^i_\et(BG_{\bar k_0},\Z(n))\simeq H^i_\et(\bar k_0,\Z(n))\oplus H^i(G,\Z)(n)\]
(where $H^i(G,\Z)$ is the integral cohomology of $G$ and the twist $n$ takes care of the Galois action), so we are reduced to a computation in the cohomology of $G$, which is done in \cite[\S 5]{totaro}. (Recall that $H^6_\et(\bar k_0,\Z(n))$ is uniquely $2$-divisible: \cite[Lemma 7.3 1)]{BG}.)

\subsection{\protect{\cite[\S\S 4 and 5]{sc-suz}}}\label{s4.2} The set up is the same as previously, but the counterexamples rely on the fact that Bloch's map
\[\lambda: CH^3(X_{\bar k_0})\{2\} \to H^5_\et(X_{\bar k_0},\Q_2/\Z_2(3))\]
is not injective. We observe that the Bockstein map 
\[H^5_\et(X_{\bar k_0},\Q_2/\Z_2(3))\to H^6_\cont(X_{\bar k_0},\Z_2(3))\] 
factors through $CH^3_\et(X_{\bar k_0})\otimes \Z_2=H^6_\et(X_{\bar k_0},\Z(3))\otimes \Z_2$.

\begin{qn} The origin of the counterexample in \cite[\S4]{sc-suz} is a nonzero class $x \in H^4_\nr(X,\Q_2/\Z_2(3))$ (loc. cit., Lemma 4.1); by the exact sequence of \cite[p. 998]{cell}, this group surjects onto $\Ker\alpha^3_X$. Can one prove directly that the image of $x$ in $CH^3(X)$ by this map is nonzero? It would explain the counterexample in the spirit of this paper.
\end{qn}

\subsection{\protect{\cite[Th. 1.2 and 1.5]{AS22}}}\label{s4.3} Here $X=S\times E$, where $S$ is a surface and $E$ an elliptic curve; $z=z_1\otimes \tau\in CH^3(X)$ where $z_1\in CH^2(S)$, $\tau\in {}_ l CH^1(E)$ and $\cl^2_S(z_1)$ maps to $0$ in $H^4_\et(S,\mu_l^{\otimes 2})$ (see loc. cit., \S 7). In view of the exact sequence
\[CH^2_\et(S)\by{l} CH^2_\et(S)\to H^4_\et(S,\mu_l^{\otimes 2})\]
we have $\alpha^2_S(z_1)=lt$ for some $t\in H^4_\et(S,\Z(2))$, hence
\[\alpha^3_X(z)= \alpha^2_S(z_1)\otimes \alpha^1_E(\tau)= lt\otimes \alpha^1_E(\tau)=0.\]

\begin{rque}[see \protect{\cite[Rem. 5.4]{ct-sc}}]  For $n>2$ it is easy to produce an example of an $X$ and an element $x\in CH^n(X)$ such that $\alpha^n_X(x)\ne 0$ but $\beta^n_X\alpha^n_X(x)=0$, hence also $\cl^n_X(x)=0$. Namely, start from one of the examples $X_0$ from the next two sections, and let $x_0\in \Ker \cl^2_{X_0}$; just take $X=X_0\times \P^{n-2}$ and $x=x_0\otimes \theta$, where $\theta$ is the canonical generator of $CH^{n-2}(\P^{n-2})=\Z$, and use the projective bundle formula in all theories.
\end{rque}

\section{The counterexample of \protect{\cite[Th. 6.3]{sc-suz}}}\label{s4.4}

Here $X$ is a norm variety of dimension $l^2-1$ over $F=k(t)$ in the sense of \cite{suslin-jou} and \cite{norm}, where $k$ is a global field\footnote{Of characteristic $0$ if $l\ne 2$.}, associated to a nontrivial symbol $s\in H^3_\et(F,\mu_l^{\otimes 2})$. Let $\sR$ be the associated Rost motive: it is a direct summand of the Chow motive of $X$ with coefficients $\Z_{(l)}$, and $CH^2(\sR)\simeq \Z/l$ \cite[proof of Th. 6.3]{sc-suz}. 

Scavia and Suzuki prove that $\cl^2_\sR=0$. We shall recover this result by passing through étale motivic cohomology; namely:

\begin{prop}\label{p2} a) The canonical map
\[CH^2_\et(F)\otimes \Z_{(l)}\to CH^2_\et(\sR)\]
is an isomorphism. \\
b) The map $\beta^2_\sR$ is $0$.
\end{prop}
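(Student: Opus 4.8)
The plan is to deduce everything from two inputs: the structure of the Rost motive $\sR$ as a direct summand of $h(X)_{(l)}$ with its known mod-$l$ motivic cohomology (in particular $CH^2(\sR)\simeq \Z/l$), and the fact, already recorded in \S\ref{s1} (see Proposition \ref{p3}), that for a field $F$ with $\cd_l(F)=3$ one has an isomorphism $H^3(F,\Q_l/\Z_l(2))\iso CH^2_\et(F)\otimes\Z_l$, while $\tilde H^4_\cont(F,\Z_l(2))=0$ because a cofinal system of $\Z[1/l]$-models of $\Spec F$ has cohomological dimension $3$ (Lemma \ref{l4}). The symbol $s\in H^3_\et(F,\mu_l^{\otimes 2})$ generating the counterexample is exactly a nonzero $l$-torsion class in this group.

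For part a), first I would pass to étale motivic cohomology with $\Z_{(l)}$-coefficients, where the splitting $h(X)_{(l)}=\sR\oplus(\text{rest})$ is available, so that $CH^2_\et(\sR)$ is a direct summand of $CH^2_\et(X)\otimes\Z_{(l)}$. The canonical map $CH^2_\et(F)\otimes\Z_{(l)}\to CH^2_\et(\sR)$ comes from the structure map $X\to\Spec F$ (the unit $\Z\to h(X)$ has a component landing in $\sR$, since $\sR$ contains the "weight zero" piece of the Rost motive). To see it is an isomorphism I would use the known computation of the reduced motivic cohomology of $\sR$: $\sR$ is built, motivically, out of $\Z$ and a twist, and its integral (or $\Z_{(l)}$-linear) motivic cohomology in the relevant bidegree is governed by $H^*(F,\Z_{(l)}(*))$ together with a single $\Z/l$ contribution detected by the symbol $s$; concretely $CH^2_\et(\sR)$ sits in an exact sequence with $CH^2_\et(F)\otimes\Z_{(l)}$ and the cyclic group generated by the image of $\alpha^2_\sR$ of the nonzero class in $CH^2(\sR)\simeq\Z/l$, and since $CH^2_\et(F)\otimes\Z_{(l)}$ already contains $s$ (its image under $\alpha^2_F$, using $\cd_l(F)=3$ so that $H^3(F,\Q_l/\Z_l(2))\hookrightarrow CH^2_\et(F)\otimes\Z_l$ picks up torsion), the map is surjective; injectivity is the statement that $\sR$ contributes nothing new beyond $F$ in this degree, which is again the motivic decomposition of the Rost motive. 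The cleanest route is probably to compare the long exact sequences relating $CH^2$, $CH^2_\et$ and unramified cohomology (the sequence \eqref{eq9}) for $\sR$ and for $F$, and observe that $H^3_\nr$ agrees.

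Part b) is then immediate: $\beta^2_\sR$ factors through $CH^2_\et(\sR)\otimes\Z_l$, which by a) equals $CH^2_\et(F)\otimes\Z_l$, and $\beta^2_\sR$ is compatible (via the functoriality of \eqref{eq00} under the correspondence picking out $\sR$, using \eqref{eqchow}) with $\beta^2_F:CH^2_\et(F)\otimes\Z_l\to\tilde H^4_\cont(F,\Z_l(2))$. But $\tilde H^4_\cont(F,\Z_l(2))=0$ since $\cd_l$ of the models is $3$, so $\beta^2_F=0$, whence $\beta^2_\sR=0$. This recovers $\cl^2_\sR=\gamma^2_\sR\beta^2_\sR\alpha^2_\sR=0$.

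The main obstacle is part a): one must be careful about which motivic cohomology group of the Rost motive is meant and pin down the integral (not just mod-$l$) computation, since the literature more often records $CH^2(\sR)$ and mod-$l$ étale cohomology than $CH^2_\et(\sR)$ with $\Z_{(l)}$-coefficients. The safe strategy is to avoid any direct computation with $\sR$ and instead run the five-term comparison of \eqref{eq9} for the pair $(\sR,\Spec F)$, reducing the isomorphism to the equality of unramified groups $H^3_\nr(\sR,\Q/\Z(2))=H^3_\nr(F,\Q/\Z(2))$, which holds because the symbol $s$ already lives over $F$ and $\sR$ adds no further unramified classes in this degree.
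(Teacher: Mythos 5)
Part b) of your proposal is correct and is exactly the paper's argument: functoriality along $\sR\to\un$ together with the vanishing of $\tilde H^4_\cont(F,\Z_l(2))$ (Lemma \ref{l4} applied to models of $\Spec F$). The problem is part a), where your ``safe strategy'' does not work. You propose to compare the sequences \eqref{eq9} for $\Spec F$ and for $\sR$ and to invoke the equality $H^3_\nr(\sR,\Q/\Z(2))=H^3_\nr(F,\Q/\Z(2))$ ``because the symbol $s$ already lives over $F$''. This fails on two counts. First, the equality is false: $X$ is a splitting variety for $s$, so $s$ dies in $H^3(F(X),\mu_l^{\otimes 2})$ and a fortiori in the unramified group; in fact, once a) is established, the snake lemma applied to \eqref{eq9} for the pair $(\Spec F,\sR)$ shows that $H^3_\nr(F,\Q/\Z(2))\{l\}\to H^3_\nr(\sR,\Q/\Z(2))\{l\}$ is \emph{surjective with kernel of order $l$}, generated by $s$ when $l=2$ by Proposition \ref{p4}. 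Second, even granting your claimed equality, the snake lemma would give that $CH^2_\et(F)\otimes\Z_{(l)}\to CH^2_\et(\sR)$ is injective with cokernel $\Z/l$ (coming from $CH^2(\sR)\simeq\Z/l$ against $CH^2(F)=0$), i.e. the opposite of what you want to prove. Determining $H^3_\nr(\sR)$ is equivalent to the proposition, not an input to it, so the reduction is circular.

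The missing idea is the actual computation, which the paper performs via the Hochschild--Serre spectral sequence for weight-$2$ \emph{étale motivic} cohomology of $X$: after tensoring with $\Z_{(l)}$ it splits off a spectral sequence for $\sR$, and the geometric splitting $\bar\sR\simeq\bigoplus_j\L^{j(l+1)}$ shows that only the unit summand contributes, because $H^q_\et(\L^i,\Z_{(l)}(2))=0$ over $\bar F$ for $i>2$ and $j(l+1)\ge l+1\ge 3$ for $j\ge 1$. Hence $H^q_\et(\bar\sR,\Z_{(l)}(2))=H^q_\et(\bar F,\Z_{(l)}(2))$, and the $E_2$-terms in total degree $4$ reduce to $E_2^{3,1}=H^3(F,\Q_l/\Z_l(2))$, a permanent cycle, giving the isomorphism induced by $\sR\to\un$. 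Your first sketch (``$CH^2_\et(\sR)$ sits in an exact sequence with $CH^2_\et(F)\otimes\Z_{(l)}$ and a cyclic group\dots'') gestures at this but never identifies why the geometric part of $\sR$ contributes nothing in weight $2$ — which is precisely where the inequality $l+1\ge 3$ enters — and the surjectivity argument ``the map hits $s$, hence is surjective'' is a non sequitur. Note finally that a) does not require $\cd_l(F)=3$ (the paper remarks it holds over any field); only b) uses the cohomological dimension of the models.
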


\begin{proof} a) Morally, this is because $\sR$  splits (i.e. becomes a direct sum of powers of the Lefschetz motive) in the étale topology. This could be given a correct meaning in the étale version of $\DM_\gm^\eff(F)$; we give a more elementary proof, computing the Hochschild-Serre spectral sequence for the weight $2$ étale motivic cohomology of $X$ rather than for its continuous étale cohomology as in \cite{sc-suz}. Namely, this spectral sequence is
\begin{equation}\label{eq11}
E_2^{p,q}(X) = H^p_\et(F,H^q_\et(\bar X,\Z(2)))\Rightarrow  H^{p+q}_\et(X,\Z(2)).
\end{equation}
 
 As in loc. cit., after tensoring with $\Z_{(l)}$ it acquires, as a direct summand, a spectral sequence for the étale motivic cohomology of $\sR$:
 \[E_2^{p,q}(\sR) = H^p_\et(F,H^q_\et(\bar \sR,\Z_{(l)}(2)))\Rightarrow  H^{p+q}_\et(\sR,\Z_{(l)}(2)).\]

Here $\bar \sR= \sR_{\bar F}$. Following the tracks of loc. cit., we use the fact that $\bar \sR\simeq \bigoplus_{i=0}^{j-1} \L^{j(l+1)}$  where $\L$ is the Lefschetz motive \cite[beg. \S 3]{norm}. Since we are over an algebraically closed field, we have $H^q_\et(\L^i,\Z_{(l)}(2)))=0$ if $i>2$; since $l+1\ge 3$, the only summand which contributes is therefore $\un$, which yields $H^q_\et(\bar \sR,\Z_{(l)}(2)))=H^q_\et(\bar F,\Z_{(l)}(2)))$. But \eqref{eq10} shows that this group is uniquely divisible for $q\ne 1$ and that there is an exact sequence
\[0\to \Q_l/\Z_l(2)\to H^1_\et(\bar F,\Z_{(l)}(2))\to H^1_\et(\bar F,\Q(2))\to 0.\]

For $p+q=4$, we therefore have $E_2^{p,q}(\sR)=0$ except for $q=1$, where its value is
\[E_2^{3,1}(\sR)=H^3(F, \Q_l/\Z_l(2)).\]
 
 By unique divisibility and the vanishing of $H^3_\et(\bar F,\Z_{(l)}(2)))$, these are permanent cycles, whence an isomorphism
 $H^3(F, \Q_l/\Z_l(2))\iso CH^2_\et(\sR)$, which is obviously induced by the canonical morphism $\sR\to \un$.
 
 b) This follows from a) by functoriality, since $H^4_\cont(F,\Z_l(2)) = 0$.
 \end{proof}

\begin{rque} As the proof shows, a) is valid for any Rost motive over any field $F$, regardless of its cohomological dimension.
\end{rque}

Let $\alpha$ be a generator of $CH^2(\sR)$; Proposition \ref{p2} a) identifies $\alpha^2_\sR(\alpha)$ with an element of $H^3(F, \Q_l/\Z_l(2))$.  Since it is killed by $l$, it comes by Merkurjev-Suslin from a unique element $t\in H^3(F, \Z/l(2))$. 

\begin{prop}\label{p4} If $l=2$, then $t=s$.
\end{prop}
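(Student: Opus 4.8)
The plan is to identify the class $t \in H^3(F,\Z/2(2))$ with the symbol $s$ that defines the norm variety $X$, by reducing everything to the analogous statement at the level of continuous étale cohomology, where the corresponding computation has already been carried out by Scavia and Suzuki. Concretely, recall the commutative diagram
\[
\begin{CD}
CH^2(\sR)\otimes \Z_{(l)} @>\alpha^2_\sR>> CH^2_\et(\sR) @>\gamma^2_\sR\beta^2_\sR>> H^4_\cont(F,\Z_l(2))
\end{CD}
\]
of \eqref{eq0bis}. Since $H^4_\cont(F,\Z_l(2))=0$ (as $F=k(t)$ with $k$ a global field has $\cd_l = 3$), we have $\gamma^2_\sR\beta^2_\sR = 0$, which is consistent with $\cl^2_\sR = 0$. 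So the class $\alpha^2_\sR(\alpha)$ living in $CH^2_\et(\sR)\cong H^3(F,\Q_l/\Z_l(2))$ (Proposition \ref{p2} a)) cannot be detected in $H^4_\cont$, and the content of Proposition \ref{p4} is that it is nonetheless nonzero, and equal to $s$ after the Merkurjev--Suslin identification of the $l$-torsion of $H^3(F,\Q_l/\Z_l(2))$ with $H^3(F,\Z/l(2))$.

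First I would trace through the construction of the generator $\alpha \in CH^2(\sR) \simeq \Z/l$. The Rost motive $\sR$ is a summand of $h(X)_{(l)}$, and the class generating $CH^2(\sR)$ is, up to the splitting, a specific cycle on the norm variety $X$; the key input is Rost's (and Suslin--Joukhovitski's, Yagita's) description of the Chow groups of the Rost motive and the fact that the ``characteristic class'' of $X$ in $CH^2$ is the one whose image under the cycle class map to motivic cohomology modulo $l$ recovers the defining symbol. More precisely, I would invoke that under the boundary/Bockstein sequence \eqref{eq10} for $j=3$, $n=2$,
\[
H^3_\et(F,\Z(2))/l \to H^3_\et(F,\mu_l^{\otimes 2}) \to {}_l H^4_\et(F,\Z(2)) \to 0,
\]
together with the map $\beta\colon H^3_\et(F,\mu_l^{\otimes 2}) \to H^4_\et(F,\Z(2))$ — but here instead I want the relation going through $CH^2_\et(\sR)$. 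The cleanest route: the element $t$ is characterized by $\beta(t) = \alpha^2_\sR(\alpha)$ under the Bockstein $H^3(F,\Z/l(2)) \to {}_lH^4_\et(F,\Z(2)) \hookrightarrow H^3(F,\Q_l/\Z_l(2))$ (the last being the identification from Merkurjev--Suslin, i.e. $H^4_\et(F,\Z(2))\{l\} = H^3(F,\Q_l/\Z_l(2))/\mathrm{div}$, but here the group is already torsion). Now I would compare this with the computation in \cite{sc-suz}: Scavia and Suzuki identify, for $l=2$, the relevant class on the Rost motive in mod-$2$ étale cohomology with the symbol $s$ via the motivic decomposition of $\bar\sR$ and the edge maps of the Hochschild--Serre spectral sequence; since our diagram is a direct summand of theirs and the vertical cycle class maps are compatible, the identity $t = s$ is forced. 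In other words, I would show the square
\[
\begin{CD}
CH^2(\sR)\otimes \Z_{(2)} @>>> CH^2_\et(\sR)\\
@| @VVV\\
CH^2(\sR)\otimes \Z_{(2)} @>>> H^4_\et(\sR,\mu_2^{\otimes 2})
\end{CD}
\]
commutes and that the bottom map sends $\alpha$ to (the image of) $s$ by the explicit computation in the cohomology of the Rost motive — this is essentially \cite[proof of Th. 6.3]{sc-suz}, rephrased.

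The main obstacle I expect is bookkeeping the several identifications of $H^3(F,\Z/2(2))$, $H^3(F,\Q_2/\Z_2(2))$, ${}_2H^4_\et(F,\Z(2))$ and $CH^2_\et(\sR)$ so that ``$t = s$'' is a genuine equality of elements and not merely of isomorphism classes of nonzero groups (all of which are $\Z/2$ here, so there is a real danger of a vacuous statement). The substantive point — already the heart of \cite{sc-suz} — is that the generator of $CH^2(\sR)$ is the cycle whose mod-$2$ cycle class is the symbol $s$; this is the defining property of a norm variety / Rost motive attached to $s$ (see \cite{norm},\cite{suslin-jou}), so once the diagrams are set up correctly the proof is short. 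A secondary subtlety: one must check the Bockstein $\beta$ and the edge map $E_2^{3,1}(\sR) = H^3(F,\Q_l/\Z_l(2)) \iso CH^2_\et(\sR)$ from Proposition \ref{p2} a) are compatible with the analogous maps for continuous étale cohomology, which follows from naturality of Hochschild--Serre in the coefficient triangle $\Z(2) \to \Q(2) \to \Q/\Z(2)$ and from \eqref{eq8}. I do not expect to need anything about $l \neq 2$; the restriction to $l = 2$ is presumably only because the relevant explicit computation (the nonvanishing of the Bockstein of $s$, equivalently the structure of the mod-$2$ Milnor/motivic cohomology of the norm variety) is what \cite{sc-suz} supplies in that case.
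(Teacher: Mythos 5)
Your reduction to identifying $\alpha^2_\sR(\alpha)$ with $s$ inside $H^3(F,\Q_2/\Z_2(2))$ is the right target, but the step where you actually make that identification is exactly where the proof has a gap. You assert that ``the generator of $CH^2(\sR)$ is the cycle whose mod-$2$ cycle class is the symbol $s$'' is ``the defining property of a norm variety / Rost motive attached to $s$''. It is not: the defining property of a norm variety for $s$ is that $s$ vanishes over its function field (generic splitting), and the input taken from \cite[proof of Th.~6.3]{sc-suz} is only the computation $CH^2(\sR)\simeq\Z/l$ together with $\cl^2_\sR=0$ --- neither of which pins down \emph{which} element of the (large) group $H^3(F,\Q_2/\Z_2(2))$ the class $\alpha^2_\sR(\alpha)$ is. (Relatedly, your worry that the statement might be vacuous because ``all of which are $\Z/2$'' is misplaced: only $CH^2(\sR)$ is $\Z/2$; the groups $H^3(F,\Z/2(2))\supseteq\{0,t,s,\dots\}$ are huge, which is precisely why $t=s$ has content.) Deferring the identification to an ``explicit computation'' in \cite{sc-suz} is circular here, since that identification is the new content of the Proposition.

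The paper closes this gap by a completely different, soft argument that you should compare with. One may assume $X$ is a Pfister quadric; let $K=F(X)$. Over $K$ the Rost motive splits, so $CH^2(\sR_K)=0$ and hence $t_K=0$ by functoriality of $\alpha^2_\sR$ and of the Merkurjev--Suslin identification. Since $t\neq 0$ and, by Arason's theorem \cite[Satz~5.6]{arason}, the kernel of the restriction $H^3(F,\Z/2)\to H^3(K,\Z/2)$ is exactly $\{0,s\}$, one concludes $t=s$. The essential external input is thus Arason's computation of that kernel, not any chain-level or spectral-sequence identification of the cycle class of $\alpha$; your proposal is missing a substitute for this input, and without one the equality $t=s$ (rather than merely $t\neq 0$) does not follow.
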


\begin{proof} Here again, this is valid over any $F$ (of characteristic $\ne 2$). We may assume that $X$ is a Pfister quadric.
Let $K$ be its function field.  Then $\sR$ splits over $K$, hence $CH^2(\sR_K) = 0$ and $t_K = 0$ by functoriality. Since $t \ne 0$, we have $t=s$ by Arason’s theorem \cite[Satz 5.6]{arason}.
\end{proof}

I suppose the same result and proof work for $l$ odd, up to an element of $\F_l^*$, but I am lacking a reference for an analogue of Arason's theorem.

\section{The counterexample of \protect{\cite[Th. 5.3]{ct-sc}}}\label{s4.5} This is by far the most delicate of all. 

Here $X$ is a rational surface over $k=\Q(\sqrt{-p})(t)$, where $p$ is a prime number $\equiv -1\pmod{3}$. More generally, Colliot-Thélène and Scavia consider smooth projective surfaces satisfiying strong conditions  in the spirit of \cite{saito}. Here is a version of their results in this generality, using $CH^2_\et(X)$ and $\beta^2_X$. For simplicity, we invert $p$ if $k$ is of characteristic $p>0$.

We shall use Hypothesis (H4) of \cite[\S 3]{ct-sc}, that we now recall:

\begin{itemize}
\item[(H4)] $H^i(X,\sO_X) = 0$ for $i = 1,2$, $H^3(\bar X^s,\Q_l) = 0$ and, for all $l$, $H^i(X^s,\Z)\{l\} = 0$ if  $i \le 4$. Here $X^s=X\otimes_k k_s$ where $k_s$ is a separable closure of $k$.
\end{itemize}

\begin{rque}
Hypothesis (H4) includes the vanishing of $\Br(\bar X)\{l\}$, or equivalently of the ``transcendental'' part of $H^2_\cont(\bar X,\Q_l)$. Bloch's conjecture then predicts that the Albanese kernel of $\bar X$ vanishes, hence that $CH^2(X)^0$ and $CH^2_\et(X)^0$ are torsion. This is true e.g. for rational surfaces. 
\end{rque}

\begin{thm}\label{t4} Let $X$ be a surface over a field $k$, verifying Hypothesis (H4).\\
a) Let $CH^2_\et(X)^0$ be the kernel of the degree map
\[CH^2_\et(X)\by{f_*} CH^0_\et(\Spec k)=\Z\]
where $f:X\to \Spec k$ is the structural morphism. Let $S$ be the Néron-Severi torus of \cite[p. 6]{ct-sc}. Then there is  a canonical map
\begin{equation}\label{eq20}
CH^2_\et(X)^0\by{\Phi_\et} H^1(k,S)
\end{equation}
such that $\Ker \beta^2_X\subseteq \Ker \Phi_\et\otimes \Z_l$, hence $\Ker \cl^2_X\subseteq \Ker \Phi\otimes \Z_l$ where $\Phi=\Phi_\et\circ \alpha^2_X$.\\
b) If $\cd_l(k)\le 3$, $\Phi_\et$ is surjective. If $\cd_l(k)\le 2$ or if $X$ has a $0$-cycle of degree $1$, $\Ker \Phi_\et$ is uniquely divisible and $\beta^2_X$, $\cl^2_X$ are injective.\\
c) There are compatible exact sequences
\begin{multline}\label{eq16a}
S(k)\to H^3_\et(k,\Q/\Z(2))\\
\to CH^2_\et(X)_\tors\by{\Phi_\et} H^1(k,S)\to H^4_\et(k,\Q/\Z(2))
\end{multline}
\begin{equation}\label{eq16b}
S(k)\to N\\
\to CH^2(X)_\tors\by{\Phi} H^1(k,S)\to H^4_\et(k,\Q/\Z(2))
\end{equation}
where $N=\Ker(H^3_\et(k,\Q/\Z(2))\to H^3_\nr(X,\Q/\Z(2))$ (cf. \cite[(4.1)]{ct-sc}).\\
d) If $\cd_l(k)\le 3$, the inclusions in a) refine to equalities
\[\Ker \beta^2_X= (\Ker \Phi_\et\otimes \Z_l)_\tors, \quad \Ker \cl^2_X= (\Ker \Phi \otimes \Z_l)_\tors\]
 (cf. \cite[Th. 4.8]{ct-sc}). 
\end{thm}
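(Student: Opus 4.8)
The whole proof runs off the Hochschild--Serre spectral sequence for the weight $2$ étale motivic cohomology of $X$, used exactly as in the proof of Proposition \ref{p2}:
\[E_2^{p,q}=H^p_\et(k,H^q_\et(\bar X,\Z(2)))\Rightarrow H^{p+q}_\et(X,\Z(2)),\]
and it reorganises the arguments of \cite{ct-sc} around it. The first step is to compute the $E_2$-page. Hypothesis (H4) makes the geometry of $\bar X$ as rational as possible; in particular the integral Chow motive of $\bar X$ decomposes as $\un\oplus\Pic(\bar X)\otimes\L\oplus\L^{\otimes 2}$, with $\Pic(\bar X)$ finitely generated free over $\Z$ with a $G_k$-action. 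Applying $\Hom_{\DM(\bar k)}(-,\Z(2)[q])$ summand by summand, and using $H^i_\et(\bar k,\Z(1))=\bar k^*$ for $i=1$ (and $0$ otherwise), $\cd(\bar k)=0$, and the vanishing of $H^0_\et(\bar k,\Z(2))$, one gets: $H^q_\et(\bar X,\Z(2))$ is $0$ for $q=0$ and for $q\ge 5$; it equals $H^1_\et(\bar k,\Z(2))$ for $q=1$ (divisible, with torsion $\Q/\Z(2)$), $K_2^M(\bar k)$ for $q=2$ (uniquely divisible), $\Pic(\bar X)^\vee\otimes\bar k^*=S_{\bar k}$ for $q=3$ — this is how the Néron--Severi torus $S$ enters, through its definition by $\hat S=\Pic(\bar X)$ — and $\Z$ (trivial action) for $q=4$. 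So the only nonzero rows of $E_2$ are $q=1,2,3,4$ and the row $q=3$ is $H^p(k,S)$.

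Next I read off the maps. The edge morphism $H^4_\et(X,\Z(2))\twoheadrightarrow E_\infty^{0,4}\inj E_2^{0,4}=\Z$ is the degree map $f_*$, and since $E_2^{4,0}=0$ this shows that $CH^2_\et(X)^0=\Ker f_*$ is the first step $F^1$ of the Hochschild--Serre filtration, with successive quotients $E_\infty^{1,3}$, $E_\infty^{2,2}$, $E_\infty^{3,1}$. I then \emph{define} $\Phi_\et$ as the composite $F^1\twoheadrightarrow F^1/F^2=E_\infty^{1,3}\inj E_2^{1,3}=H^1(k,S)$ (the last arrow injective because no differential can reach $E^{1,3}$ from a column of negative index), so $\Ker\Phi_\et=F^2$. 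The graded pieces of $F^2$ are $E_\infty^{2,2}$, a subquotient of the uniquely divisible $H^2_\et(k,K_2^M(\bar k))$ (which I expect to vanish here, the $h^0$-part of $H^4_\et(X,\Z(2))$ being torsion), and $E_\infty^{3,1}$, a subquotient of $H^3_\et(k,H^1_\et(\bar k,\Z(2)))$ whose torsion subgroup is $H^3_\et(k,\Q/\Z(2))$. The differentials out of $E^{1,3}$ land in the uniquely divisible $E_2^{3,2}$ ($d_2$) and in a subquotient of $E_2^{4,1}$ of torsion $H^4_\et(k,\Q/\Z(2))$ ($d_3$); those into $E^{3,1}$ come from $E^{1,2}$ (uniquely divisible) and from $E^{0,3}=S(k)$. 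Unwinding, with the identification $H^4_\et(k,\Z(2))=H^3_\et(k,\Q/\Z(2))$ and the Bockstein sequence relating $\Z(2)$- and $\Q/\Z(2)$-coefficients (which also produces the term $N$ of \eqref{eq9} in the second sequence), gives the two compatible exact sequences of (c), the connecting map $S(k)\to H^3_\et(k,\Q/\Z(2))$ being $d_3$ on $E^{0,3}$ and $H^1(k,S)\to H^4_\et(k,\Q/\Z(2))$ being $d_3$ on $E^{1,3}$.

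For (a), the refined cycle class map induces a morphism from the spectral sequence above, tensored with $\Z_l$, to the Hochschild--Serre spectral sequence $\tilde H^p_\cont(k,H^q_\cont(\bar X,\Z_l(2)))\Rightarrow\tilde H^{p+q}_\cont(X,\Z_l(2))$ for Jannsen's tame cohomology \cite{jann2}. On the row $q=3$, the coefficient map $H^3_\et(\bar X,\Z(2))\otimes\Z_l=\Pic(\bar X)^\vee\otimes\bar k^*\otimes\Z_l\to\tilde H^3_\cont(\bar X,\Z_l(2))=\Pic(\bar X)^\vee\otimes(\bar k^*\otimes\Z_l)$ is an isomorphism ($\Pic(\bar X)$ being free and finitely generated), so the induced map $E_\infty^{1,3}\otimes\Z_l\to$ its analogue is still injective; on the rows $q\le 2$ the coefficient maps have divisible kernel by Lemma \ref{l2}. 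A filtration chase gives $\Ker\beta^2_X\subseteq F^2\otimes\Z_l=\Ker(\Phi_\et\otimes\Z_l)$, and precomposing with the injection $\alpha^2_X$ (Lemma \ref{l2}) gives $\Ker\cl^2_X\subseteq\Ker\Phi\otimes\Z_l$ with $\Phi=\Phi_\et\alpha^2_X$. For (b) and (d) one determines when the differentials out of $E^{1,3}$ vanish: if $\cd_l(k)\le 3$ then the torsion of $E_2^{4,1}$ after $l$-localisation is $H^4_\et(k,\Q_l/\Z_l(2))=0$, which kills $d_3$, and $d_2$ lands in a uniquely divisible group, whence $E_\infty^{1,3}\otimes\Z_l=H^1(k,S)\otimes\Z_l$ and $\Phi_\et$ is surjective; moreover $\tilde H^4_\cont(k,\Z_l(2))=0$ by the argument of Proposition \ref{p3}, so $\beta^2_X$ kills the $h^0$-part $F^2\otimes\Z_l$ and the inclusions of (a) become the equalities of (d). If in addition $\cd_l(k)\le 2$, then $H^3_\et(k,\Q_l/\Z_l(2))=0$ makes $E_\infty^{3,1}$ $l$-torsion-free, so $\Ker\Phi_\et=F^2$ becomes uniquely divisible after $\otimes\Z_l$, forcing $\Ker\beta^2_X=\Ker\cl^2_X=0$; the case of a $0$-cycle of degree $1$ is handled the same way, once one notes that the Künneth decomposition of $\bar X$ then descends to $k$, which makes $S(k)\to H^3_\et(k,\Q/\Z(2))$ surjective and collapses $F^2$ to its uniquely divisible part. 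The main obstacle is exactly this matching of the spectral-sequence data with the explicit maps $\Phi_\et$, $S(k)\to H^3_\et(k,\Q/\Z(2))$, $H^1(k,S)\to H^4_\et(k,\Q/\Z(2))$ of \cite{ct-sc}, checking the compatibility of the two exact sequences in (c) via \eqref{eq9}, and tracking (uniquely) divisible subgroups when passing between $\tilde H_\cont$ and $H_\cont$.
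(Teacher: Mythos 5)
Your plan runs the Hochschild--Serre spectral sequence \eqref{eq11} directly on $X$ after decomposing $h(\bar X)$, and this is precisely what the paper avoids, for reasons that create two genuine gaps in your argument. First, your decomposition $h(\bar X)=\un\oplus\Pic(\bar X)\otimes\L\oplus\L^{\otimes2}$ drops the transcendental summand $t_2$. Hypothesis (H4) does \emph{not} kill $t_2$: its vanishing amounts (given $\Pic^0(\bar X)=0$ and $\Br(\bar X)\{l\}=0$) to the vanishing of the Albanese kernel of $\bar X$, i.e.\ to Bloch's conjecture, known e.g.\ for rational surfaces but not for all surfaces satisfying (H4). The correct decomposition is $\un\oplus\NS_X\otimes\L\oplus t_2\oplus\L^2$ (Proposition \ref{l5}), and one must then \emph{prove} that the groups $H^q_\et(t_2,\Z(2))$ are uniquely $l$-divisible (Lemma \ref{l6}, which uses (H1) and (H3) via \eqref{eq12}); without this, your identifications $H^4_\et(\bar X,\Z(2))=\Z$, $E_2^{1,3}=H^1(k,S)$ and $F^1=CH^2_\et(X)^0$ all carry unidentified correction terms. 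Second, and more structurally, the projectors cutting out $\un$ and $\L^2$ require a $0$-cycle of degree $1$, so the decomposition is not Galois-equivariant exactly in the case the counterexample needs; only the $\NS_X\otimes\L$ projector is defined over $k$ (\cite[Lemma 14.2.2]{kmp}). Your reading of the $E_2$-page and of the filtration therefore presupposes equivariant splittings that are not available, which is why the paper does not use \eqref{eq11} on $X$ itself: it first forms the homotopy fibre of $R\Gamma_\et(X,\Z(2))\to R\Gamma_\et(\L^2,\Z(2))$ and then the cofibre of the map from $R\Gamma_\et(k,\Z(2))$ --- both morphisms being defined over $k$ --- and only runs Hochschild--Serre on the resulting reduced complex $\bar R\Gamma_\et(X,\Z(2))$, whose geometric fibre splits equivariantly as $R\Gamma_\et(t_2,\Z(2))\oplus R\Gamma_\et(\NS_X\otimes\L,\Z(2))$ (Proposition \ref{p5}). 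This dévissage is also what produces the terms $S(k)\to H^3_\et(k,\Q/\Z(2))$ and $H^1(k,S)\to H^4_\et(k,\Q/\Z(2))$ of \eqref{eq16a} as boundary maps of the sequence \eqref{eq16}, rather than as differentials whose identification you would still have to justify.

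A further soft spot is your treatment of a): the injectivity of $E_\infty^{1,3}\otimes\Z_l$ into its continuous analogue does not follow from the coefficient map being an isomorphism. What is needed is the injectivity of $H^1(k,S)\otimes\Z_l\to H^2_\cont(k,\NS_X\otimes\Z_l(1))$ (the map \eqref{eq21}), which the paper extracts from the Kummer sequences $0\to{}_{l^\nu}S\to S\to S\to0$ together with the fact that $H^1(k,S)$ is torsion, quoting \cite[Prop.\ 2.2 c)]{ct-sc}; continuous cohomology of the limit and the ($l$-adically completed) cohomology of the colimit module differ in general, so this step cannot be waved through. Your overall architecture --- compute a weight-$2$ étale motivic spectral sequence, define $\Phi_\et$ as an edge map, compare with the continuous theory, and let $\cd_l(k)$ control the differentials --- is the right one and matches the paper's; but as written the proof needs $t_2$ reinstated, the equivariance problem resolved (e.g.\ by the paper's reduced complex), and the injectivity of \eqref{eq21} argued.
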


To prove Theorem \ref{t4}, we need the following Propositions \ref{l5} and \ref{p5}.

\begin{prop}\label{l5} Under the hypotheses of Theorem \ref{t4}, the Chow motive $h(X^s)$ has the following integral decomposition:
\[h(X^s) = \un\oplus \NS_X\otimes \L \oplus t_2 \oplus \L^2\]
where (as previously) $\L$ is the Lefschetz motive and $\NS_X$ is the Artin motive associated to the geometric Néron-Severi group $\NS(X^s)$, and $t_2$ represents the Albanese kernel. 
 \end{prop}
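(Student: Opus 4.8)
The plan is to obtain this from the classical refined Chow--K\"unneth decomposition of a surface, in the form going back to Murre, carried out over the algebraically closed field $\bar k$ (over which $\bar X$ has a rational point), and then to use Hypothesis (H4) in two places: to kill the ``abelian'' summands $h_1,h_3$, and to make the remaining splitting of the middle motive integral.

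First I would write down the standard integral decomposition $h(\bar X)=h_0\oplus h_1\oplus h_2\oplus h_3\oplus h_4$. Here $h_0=\un$ and $h_4=\L^2$ are cut out by the orthogonal idempotents $\pi_0=x\times\bar X$ and $\pi_4=\bar X\times x$ attached to a rational point $x$, while $h_1,h_3$ are realised by integral projectors built from the reduced Picard variety $\Pic^0_{\bar X,\mathrm{red}}$ and its dual, and $\pi_2:=\Delta_{\bar X}-\pi_0-\pi_1-\pi_3-\pi_4$. Since Hypothesis (H4) is ``in the spirit of \cite{saito}'' it entails $b_1(\bar X)=0$, so $\Pic^0_{\bar X,\mathrm{red}}=0$, hence $h_1=h_3=0$ and $h(\bar X)=\un\oplus h_2\oplus\L^2$ with $h_2=(h(\bar X),\pi_2)$.

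Next I would split $h_2=h_2^{\mathrm{alg}}\oplus t_2$. Choose divisors $D_1,\dots,D_\rho$ whose classes form a $\Z$-basis of $\NS(\bar X)$ modulo torsion; under (H4), at least away from $p$, this torsion vanishes and $\NS(\bar X)=\Pic(\bar X)=CH^1(\bar X)$, so $\NS_X\otimes\L=\L^{\oplus\rho}$. The intersection pairing on $\NS(\bar X)$ is non-degenerate by the Hodge index theorem; moreover (H4) makes $H^2(\bar X)$ entirely algebraic --- this is the vanishing of $\Br(\bar X)\{l\}$ recalled in the Remark --- so $\NS(\bar X)\otimes\Z_l\iso H^2_\et(\bar X,\Z_l(1))$, which is unimodular by Poincar\'e duality. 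Thus the Gram matrix $(D_i\cdot D_j)$ is invertible over $\Z[1/p]$, the cycle $\pi_2^{\mathrm{alg}}:=\sum_{i,j}\big((D_i\cdot D_j)^{-1}\big)_{ij}\,D_i\times D_j$ is an idempotent factor of $\pi_2$, integral away from $p$, whose image is $\NS_X\otimes\L$, and one puts $t_2:=(h(\bar X),\pi_2-\pi_2^{\mathrm{alg}})$. A short computation with Chow groups then finishes the proof: $\un$ carries $CH^0(\bar X)=\Z$, $\NS_X\otimes\L$ carries $CH^1(\bar X)=\NS(\bar X)$, and $\L^2$ carries the degree on $CH^2(\bar X)$, so that the projector of $t_2$ acts as $z\mapsto z-\deg(z)\,[x]$ on $CH^2$ and as $0$ in lower degrees; hence $CH^i(t_2)=0$ for $i\ne 2$ and $CH^2(t_2)=CH^2(\bar X)^0$, which over the algebraically closed $\bar k$, with $b_1(\bar X)=0$, is $CH^2(\bar X)_{\mathrm{alg}}$, i.e. the Albanese kernel of $\bar X$.

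The hard part is the passage from a rational to an integral splitting of $h_2^{\mathrm{alg}}$ off $h_2$: this is precisely where unimodularity --- and not just non-degeneracy --- of the intersection form on $\NS(\bar X)$ is needed, hence the full strength of the $H^2$-part of (H4), and it is also the source of the $p$-primary indeterminacy tolerated throughout \S\ref{s4.5}. Without it one would only get a $\Q$-linear decomposition, or an integral one up to denominators dividing the discriminant of $\NS(\bar X)$. The remaining points --- orthogonality of the idempotents, the verification that $\pi_2^{\mathrm{alg}}$ is a factor of $\pi_2$, the Chow-group bookkeeping, and the fact that the realisations of the four summands reproduce the \'etale cohomology of $\bar X$ --- are routine.
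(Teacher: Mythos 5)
Your proof is correct and follows essentially the same route as the paper: the paper simply cites the refined Chow--K\"unneth decomposition of Kahn--Murre--Pedrini \cite[Prop. 14.2.1 and 14.2.3]{kmp} and records exactly the two refinements you carry out by hand, namely that $h_1,h_3$ vanish because $\Pic^0(\bar X)=0$ and that the projector cutting out $\NS_X\otimes\L$ is integral because the intersection pairing on the torsion-free lattice $\NS(\bar X)$ is perfect. Your derivation of that perfectness (unimodularity of $H^2_\et(\bar X,\Z_l(1))$ via Poincar\'e duality, giving integrality over $\Z$ in characteristic $0$ and over $\Z[1/p]$ in characteristic $p$) is consistent with the section's convention of neglecting $p$-primary phenomena.
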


\begin{proof} This is the decomposition of \cite[Prop. 14.2.1 and 14.2.3]{kmp}, with two refinements: 1) the summands $h_1(X)$ and $h_3(X)$ vanish because $\Pic^0(X^s)=0$ (Hypothesis (H1) of \cite{ct-sc}), and all Chow-Künneth projectors have integral coefficients. For those of degrees $0$ and $4$, this is obvious since we can choose a rational point. For $\NS_X\otimes \L$ we use the fact that, under \cite[(H1)]{ct-sc}, the intersection pairing on the (torsion-free) Néron-Severi lattice $\NS(X^s)$ is perfect, so the idempotent defining it on \cite[p. 465]{kmp} is integral.
\end{proof}

A problem with the decomposition of Proposition \ref{l5} is that it is not Galois-equivariant in the absence of a $0$-cycle of degree $1$, a crucial condition for the counterexample of \cite{ct-sc} (see loc. cit., Th. 4.2 and part b) of Theorem \ref{t4}). This makes a direct use of the Hochschild-Serre spectral sequence \eqref{eq11} delicate. The most conceptual way to get around this would be to replace it by the finer slice spectral sequence of \cite[(3.2)]{slice}:
\begin{equation}\label{e1.4et}
E_2^{p,q}(X,n)=H^{p-q}_\et(c_q(X),\Z(n-q)_\et)\Rightarrow
H^{p+q}_\et(X,\Z(n))
\end{equation}
but justifying a computation of the slices $c_q(X) \in \DM_\et^\eff(k)$ appears too complicated.\footnote{One should have $c_q(X)=0$  for $q>2$, $c_2(X)=\Z$, $c_1(X)=\NS_X[0]$,  and an exact triangle $t_2 \to c_0(X)\to \Z\to t_2[1]$. One issue is to justify that the motive $t_2$ of Proposition \ref{l5} does define an object of $\DM_\et^\eff(k)$.} Instead, we proceed with a more down-to-earth dévissage as follows:

First, the structural map $X\to \Spec k$ induces a morphism of Chow motives $h(X)\to \un$, hence another one $\L^2\to h(X)$ by Poincaré duality. This induces in turn two morphisms of complexes, with composition $0$:
\begin{equation}\label{eq15}
R\Gamma_\et(k,\Z(2))\to R\Gamma_\et(X,\Z(2))\to R\Gamma_\et(\L^2,\Z(2)).
\end{equation}

Write $R\Gamma_\et(X,\Z(2))^+$ for the homotopy fibre of the second morphism. Taking cohomology, we get a short exact sequence
\[0\to H^4(R\Gamma_\et(X,\Z(2))^+)\to CH^2_\et(X)\to CH^0_\et(k)=\Z\]
which identifies $H^4(R\Gamma_\et(X,\Z(2))^+)$ with the kernel $CH^2_\et(X)^0$ of the degree map.

Then the first morphism of \eqref{eq15} lifts to a morphism $R\Gamma_\et(k,\Z(2))\to R\Gamma_\et(X,\Z(2))^+$ (in the derived category); write $\bar R\Gamma_\et(X,\Z(2))$ for its homotopy cofibre and $\bar H^i_\et(X,\Z(2))$ for the cohomology groups of the latter. This time, we have an exact sequence
\begin{multline}\label{eq16}
\bar H^3_\et(X,\Z(2)))\to H^4_\et(k,\Z(2))\\
\to CH^2_\et(X)^0\by{\rho} \bar H^4_\et(X,\Z(2)))\to H^5_\et(k,\Z(2))
\end{multline}
in which the first (resp. last) term identifies with $H^3_\et(k,\Q/\Z(2))$ (resp. $H^4_\et(k,\Q/\Z(2))$). In particular, $\rho$ is surjective if $\cd_l(k)\le 3$.

\begin{prop}\label{p5} There is an isomorphism $\bar H^3_\et(X,\Z(2))\simeq \ud \oplus S(k)$ and a (split) exact sequence
\begin{equation}\label{eq19}
0\to \ud\to \bar H^4_\et(X,\Z(2))\to H^1(k,S)\to  0
\end{equation}
where $\ud$ stands for uniquely divisible. 
\end{prop}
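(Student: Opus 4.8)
The plan is to turn the non-$\operatorname{Gal}(\bar k/k)$-equivariant decomposition of Proposition \ref{l5} into an honest splitting of $\bar R\Gamma_{\et}(X,\Z(2))$ over $k$. Put $G=\operatorname{Gal}(\bar k/k)$. One first checks that $-\otimes_k\bar k$ commutes with the two homotopy (co)fibres built out of the canonical maps $h(X)\to\un$ and $\L^2\to h(X)$, so that $\bar R\Gamma_{\et}(X,\Z(2))\otimes_k\bar k$ is $G$-equivariantly the complex $R\Gamma_{\et}(h_2(\bar X),\Z(2))$ with $h_2(\bar X)=\NS_X\otimes\L\oplus t_2$ as in Proposition \ref{l5} — the summands $\un$ and $\L^2$ being precisely what the dévissage removes. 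Now the inclusion $\NS_X\otimes\L\to h_2(\bar X)$ is the image of the identity under $\operatorname{End}_{\Z}(\NS(\bar X))=\Hom_{\DM_{\et}(\bar k)}(\NS_X\otimes\L,h_2(\bar X))$, and the retraction is the one attached to the inverse of the intersection form; both are canonical, hence $G$-equivariant (for the retraction one uses Hypothesis (H1), which makes the intersection pairing on $\NS(\bar X)$ perfect, so its inverse lives in $\NS(\bar X)\otimes\NS(\bar X)$). By Galois descent in $\DM_{\et}$ this exhibits $\NS_X\otimes\L$ — which is already defined over $k$ — as a direct summand of the motive underlying $\bar R\Gamma_{\et}(X,\Z(2))$, giving a splitting
\[\bar R\Gamma_{\et}(X,\Z(2))\simeq R\Gamma_{\et}(\NS_X\otimes\L,\Z(2))\oplus R\Gamma_{\et}(D_{t_2},\Z(2)),\qquad (D_{t_2})_{\bar k}\simeq t_2.\]

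For the first summand, the cancellation theorem gives $\uHom(\NS_X\otimes\L,\Z(2))\simeq\NS_X^\vee\otimes\Z(1)[-2]$; with $\Z(1)_{\et}\simeq\G_m[-1]$ and the definition of the Néron–Severi torus (so that $\NS_X^\vee\otimes\G_m\simeq S$ as étale sheaves, once the normalisation of \cite{ct-sc} is matched) this becomes $R\Gamma_{\et}(\NS_X\otimes\L,\Z(2))\simeq R\Gamma_{\et}(k,S)[-3]$, with cohomology $S(k)$ in degree $3$ and $H^1(k,S)$ in degree $4$. For the second summand I claim the cohomology of $R\Gamma_{\et}(D_{t_2},\Z(2))$ is uniquely divisible, and this is where Hypothesis (H4) enters: (H4) contains $\Br(\bar X)\{l\}=0$, which together with $\NS(\bar X)$ torsion-free forces $H^2_{\et}(\bar X,\Z_l(1))=\NS(\bar X)\otimes\Z_l$ by the Kummer sequence, i.e. the transcendental part of $l$-adic $H^2(\bar X)$ vanishes; hence the mod-$l^\nu$ realisation of $t_2$ vanishes and $R\Gamma_{\et}(t_2,\Z/l^\nu(2))_{\bar k}=0$ for all $\nu$ and all $l\ne\car k$ (via \eqref{eq8}). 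As $\Z/l^\nu$ is perfect this means $R\Gamma_{\et}(t_2,\Z(2))_{\bar k}\otimes^{\mathbf L}\Z/l^\nu=0$, so its cohomology is uniquely $l$-divisible for every $l\ne\car k$ — neglecting $p$-torsion in characteristic $p$ as throughout \S\ref{s4.5}, a $\Q$-vector space with $G$-action. Since the cohomology of a $\Q[G]$-module is again a $\Q$-vector space, the descent spectral sequence gives that $R\Gamma_{\et}(D_{t_2},\Z(2))$ has uniquely divisible cohomology.

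Combining the two summands in degrees $3$ and $4$ yields $\bar H^3_{\et}(X,\Z(2))\simeq S(k)\oplus\ud$ and $\bar H^4_{\et}(X,\Z(2))\simeq H^1(k,S)\oplus\ud$, which is both statements of the proposition and in particular splits \eqref{eq19}. I expect the real work to be in the first paragraph: verifying that the dévissage defining $\bar R\Gamma_{\et}(X,\Z(2))$ is compatible with $-\otimes_k\bar k$, and that the canonical $G$-equivariant inclusion and retraction between $\NS_X\otimes\L$ and $h_2(\bar X)$ genuinely descend to a direct-summand decomposition over $k$ (and not merely up to isogeny); the computations in the second paragraph are then routine. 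A secondary nuisance is fixing the normalisation of $S$ so that the torus produced is $S$ itself and not its dual.
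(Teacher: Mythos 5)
You correctly locate the difficulty in your first paragraph, but that is exactly where the argument has a genuine gap: the step ``by Galois descent in $\DM_\et$ this exhibits $\NS_X\otimes\L$ as a direct summand \dots over $k$'' is not justified. Hom groups in $\DM_\et(k)$ with integral coefficients do not satisfy naive Galois descent: a $G$-equivariant morphism over $\bar k$ need not be defined over $k$ (already $\Pic(X)\to\Pic(\bar X)^G$ need not be surjective, the obstruction living in $\Br(k)$), and in the absence of a zero-cycle of degree $1$ --- which is precisely the situation of interest in \cite{ct-sc} --- there is no reason the inclusion and retraction between $\NS_X\otimes\L$ and $h_2(\bar X)$, however canonical over $\bar k$, lift to morphisms over $k$, nor that the complement $D_{t_2}$ exists as an object of $\DM_\et^\eff(k)$. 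The paper flags exactly this point (see the footnote to the discussion after Proposition \ref{l5}: justifying that $t_2$ defines an object of $\DM_\et^\eff(k)$ ``appears too complicated'') and deliberately avoids it. The actual proof never descends the splitting: it runs the Hochschild--Serre spectral sequence \eqref{eq13} for $\bar R\Gamma_\et(X,\Z(2))$ and decomposes only its $E_2$-terms, i.e.\ the geometric cohomology groups as Galois modules, for which the Galois-invariance of the idempotent defining $\NS_X\otimes\L$ (from the $\Z$-perfect intersection pairing, \cite[Lemma 14.2.2]{kmp}) is enough. Your computations of the two pieces then essentially reproduce the paper's evaluation of the $E_2$-terms ($S(\bar k)$ in degree $3$ for the $\NS$-part, uniquely divisible for the $t_2$-part), but they must be carried out at that level, not after a splitting over $k$.

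A secondary looseness: for the unique divisibility of the $t_2$-part you only argue that the transcendental part of $H^2$ of $\bar X$ vanishes; the vanishing of $H^q_\et(t_2,\Z/l^\nu(2))$ for $q=1,3$ requires the further hypotheses (H1) and (H3) of \cite{ct-sc}, as in Lemma \ref{l6}. Finally, note that the paper's argument produces \eqref{eq19} with an a priori uniquely divisible cokernel, killed by observing that $H^1(k,S)$ is torsion --- a step that has no counterpart in your version and which you would still need if the descent were repaired only up to isogeny.
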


\begin{proof} We compute the Hochschild-Serre spectral sequence for $\bar R\Gamma_\et(X,\Z(2))$:
\begin{equation}\label{eq13}
E_2^{p,q}=H^p(k,\bar H^q_\et(X^s,\Z(2)))\Rightarrow \bar H^{p+q}_\et(X,\Z(2))
\end{equation}
where $\bar H^*_\et(X^s,\Z(2))$ is defined similarly, for $X^s$. Here, we can use Proposition \ref{l5} to write
\[\bar R\Gamma_\et(X^s,\Z(2)) = R\Gamma_\et(\NS_X\otimes \L \oplus t_2,\Z(2)).\]

We now note that the idempotent in $CH^2(X^s\times X^s)$ defining $\NS_X\otimes \L$ is Galois-invariant by \cite[Lemma 14.2.2]{kmp}, which works for the $\Z$-perfect pairing on $\NS(X^s)$. Accordingly, the direct sum decomposition
\begin{equation}\label{eq14}
\bar R\Gamma_\et(X^s,\Z(2))  = R\Gamma_\et(t_2,\Z(2))\oplus R\Gamma_\et(\NS_X\otimes \L,\Z(2))
\end{equation}
gives a corresponding decomposition of the $E_2$-terms of \eqref{eq13}. For the first summand, we have

\begin{lemma}\label{l6} The groups $H^q_\et(t_2,\Z(2))$ are uniquely $l$-divisible.
\end{lemma}

\begin{proof} One has a decomposition with coefficients $\Z/l^n$
\begin{equation}\label{eq12}\Small
\bar H^q_\et(X^s,\Z/l^n(2))=H^q_\et(\bar k,\Z/l^n(2))\oplus H^q_\et(t_2,\Z/l^n(2))\oplus H^{q-4}_\et(\bar k,\Z/l^n)
\end{equation}
 for any integer $n>0$, which already gives $H^q_\et(t_2,\Z/l^n(2))=0$ for $q\notin [1,3]$. Then, \cite[(H1) and (H3)]{ct-sc} also yield $H^q_\et(t_2,\Z/l^n(2))=0$ for $q=1,3$. Finally, Hypothesis (H1) implies that $\Br(X^s)=0$ and that $H^2_\cont(X^s,\Z_l(2))\to H^2_\et(X^s,\Z/l^n(2))$ is surjective; since the image of the projector defining $t_2$ acting on $H^2_\cont(X^s,\Z_l(2))$ equals $T_l(\Br(X^s))(1)$, we also get $H^2_\et(t_2,\Z/l^n(2))\allowbreak=0$. The unique divisibility follows from the long cohomology exact sequence.
 \end{proof}

Coming back to the proof of Proposition \ref{p5}, the second summand of \eqref{eq14} gives
\[H^q_\et(\NS_X\otimes \L,\Z(2))=\NS_X\otimes H^{q-2}_\et(\bar k,\Z(1))= \NS_X\otimes H^{q-3}_\et(\bar k,\G_m).\]

This is $0$ for $q\ne 3$ and $S(\bar k)$ for $q=3$.

All this allows us to write the $E_2$-terms of \eqref{eq13} as follows:
\[E_2^{p,q} =
\begin{cases}
0&\text{if } p\ne 0 \text{ and } q\ne 3\\
H^p(k,S)&\text{if } p\ne 0 \text{ and } q= 3\\
\ud &\text{if } p= 0 \text{ and } q\ne 3\\
\ud \oplus S(k) &\text{if } p= 0 \text{ and } q= 3.
\end{cases}
\]

This gives the first isomorphism and the exact sequence \eqref{eq19}, except that there is a priori a uniquely divisible term as the cokernel; but it vanishes since $H^1(k,S)$ is torsion.
\end{proof}

\begin{rque} Under Bloch's conjecture,  \cite[Cor. 14.4.9]{kmp} shows that $t_2$ is torsion; then the groups $H^q_\et(t_2,\Z(2))$ of Lemma \ref{l6} all vanish and so do those marked $\ud$ in the $E_2$-terms of the previous proof. In particular, $CH^2(t_2)\subseteq CH^2_\et(t_2)$ vanishes;  applying this over all extensions of $\bar k$, we get $t_2=0$ by a birational Manin identity principle.
\end{rque}

\begin{proof}[Proof of Theorem \ref{t4}]
Composing the map of \eqref{eq19} with the map $\rho$ of \eqref{eq16} we get the map $\Phi_\et$ of a). Thanks to Proposition \ref{p5},  the exact sequence \eqref{eq16} yields \eqref{eq16a}, and \eqref{eq16b} follows from confronting it with \eqref{eq9}. Hence c).

We may now compute the continuous étale cohomology of $X$ in a parallel way to Propositions \ref{l5} and \ref{p5}; for the last part, we get that the cohomology of $t_2$ vanishes and that $H^q_\cont(\NS_X\otimes \L,\Z_l(2))$ is $0$ for $q\ne 2$ and $\NS_X\otimes \Z_l(1)$ for $q=2$, hence with similar notation
\[\bar H^n_\cont(X,\Z_l(2))= H^{n-2}_\cont(k,\NS_X\otimes \Z_l(1)) \]
and the étale cycle class map $\bar H^4_\et(X,\Z(2))\otimes \Z_l\to \bar H^4_\cont(X,\Z_l(2))$ reads as a map
\begin{equation}\label{eq21}
H^1(k,S)\otimes \Z_l\to H^{2}_\cont(k,\NS_X\otimes \Z_l(1))
\end{equation}
which can be interpreted as stemming from the ``Kummer'' exact sequences
\[0\to {}_{l^\nu} S\to S\by{l^\nu} S\to 0\]
because of the canonical isomorphisms ${}_{l^\nu} S\simeq \NS_S/l^\nu$. Hence \eqref{eq21} is \emph{injective} by \cite[Prop. 2.2 c)]{ct-sc}, since $H^1(k,S)$ is a torsion group. Thus we get a commutative diagram
\[\begin{CD}
CH^2_\et(X)^0\otimes \Z_l@>{\Phi_\et\otimes \Z_l}>> H^1(k,S)\otimes \Z_l\\
@V(\cl^2_X)^0VV @VV\text{inj}V\\
H^4_\cont(X,\Z_l(2))@>>> H^{2}_\cont(k,\NS_X\otimes \Z_l(1))
\end{CD}\]
which shows that $\Ker \cl^2_X=\Ker (\cl^2_X)^0\subseteq \Ker \Phi_\et$, completing the proof of a) and yielding  the assertions of b) (use Theorem \ref{t3} for the vanishing of $\Ker\beta^2_X$ and $\Ker\cl^2_X$).

For d), the point is simply that the commutative square above extends to a larger commutative diagram of exact sequences
\[\begin{CD}
CH^2_\et(k)\otimes \Z_l @>>> CH^2_\et(X)^0\otimes \Z_l@>{\Phi_\et\otimes \Z_l}>> H^1(k,S)\otimes \Z_l\\
@VVV @V(\cl^2_X)^0VV @VV\text{inj}V\\
H^4_\cont(k,\Z_l(2))@>>> H^4_\cont(X,\Z_l(2))@>>> H^{2}_\cont(k,\NS_X\otimes \Z_l(1))
\end{CD}\]
and that the bottom left term is $0$.
\end{proof}

\begin{rques} 1) In order to get back the counterexample of \cite[Th. 5.3]{ct-sc}, it would remain to see that the map $S(k)\to H^3_\et(k,\Q/\Z(2))$ of \eqref{eq16b} agrees with that of \cite[(4.1)]{ct-sc} (presumably the two sequences coincide).\\ 
2) In \cite[Th. 4.8/Th. A.1]{ct-sc}, an analogue of Theorem \ref{t4} d) is stated for $\Ker \Phi$ without assuming $\cd_l(k)\le 3$. \end{rques}


\begin{thebibliography}{II}
\bibitem{AS22} Th. Alexandrou, S. Schreieder, {\it On Bloch’s map for torsion cycles over non-closed fields}, Forum Math., Sigma {\bf 11} (2023),  e53, 21 pp. 
\bibitem{arason} J. Kr. Arason {\it Cohomologische Invarianten quadratischer Formen},
J. Alg. {\bf 36} (1975), 448--491.
\bibitem{beil} A. Beilinson {\it Height pairing between algebraic cycles}, {\it in} $K$-theory, arithmetic and geometry (Yu. Manin, ed.), Lect. Notes in Math. {\bf 1289}, Springer, 1987, 1--26.
\bibitem{BS} S. Bloch, V. Srinivas {\it Remarks on correspondences and algebraic cycles}, Amer. J. Math. {\bf 105}, No. 5 (1983), 1235--1253.
\bibitem{bloch-adv} S. Bloch {\it Algebraic cycles and higher $K$-theory}, Adv. in Math. {\bf 61} (1986), 267--304.
\bibitem{bloch-corr} S. Bloch {\it The moving lemma for higher Chow groups}, J. Alg. Geom. {\bf 3} (1994), 537--568.
\bibitem{colliot} J.-L. Colliot-Thélène {\it Birational invariants, purity and the Gersten conjecture}, {\it in} $K$-theory and algebraic geometry: connections with quadratic forms and division algebras (Santa Barbara, CA, 1992), 1--64, 
Proc. Sympos. Pure Math., {\bf 58}, Part 1, Amer. Math. Soc., Providence, RI, 1995.
\bibitem{ct-sc} J.-L. Colliot-Thélène, F. Scavia {\it Sur l'injectivité de l'application cycle de Jannsen}, preprint, 2022, \url{https://arxiv.org/abs/2212.05761}, to appear in `Perspectives on four decades: Algebraic Geometry 1980 - 2020. In memory of Alberto Collino'.
\bibitem{den-murre} C. Deninger, J. P. Murre: Motivic decomposition of abelian schemes and the Fourier transform, {\it J. reine angew. Math.} {\bf 422} (1991), 201--219.
\bibitem{fulton} W. Fulton Intersection theory, Springer, 1984.
\bibitem{geisser} T. Geisser {\it Tate's conjecture, algebraic cycles and rational $K$-theory in characteristic $p$}, $K$-theory {\bf 13} (1998), 109--122.
\bibitem{geisser3} T. Geisser {\it Motivic cohomology over Dedekind rings},  Math. Z. {\bf 248} (2004), 773--794. 
\bibitem{geisser2} T. Geisser {\it Weil-étale cohomology over finite fields},  Math. Ann. {\bf 330} (2004), 665--692.
\bibitem{sga4} A. Grothendieck {\it Site et topos étale d'un schéma}, {\it in} Théor!e des topos et cohomologie étale des schémas (SGA 4), Tomor 2, Lect. Notes in Math. {\bf 270}, Springer, 1972, 341--365.
\bibitem{slice} A. Huber, B. Kahn {\it The slice filtration and mixed Tate motives}, Compositio Math. {\bf 142} (2006), 907--936.
\bibitem{gabber} L. Illusie, M. Temkin Travaux de Gabber sur l’uniformisation locale et la cohomologie étale des schémas quasi-excellents, L. Illusie et al., eds., Astérisque {\bf 363--364}, Société Mathématique de France, Paris, 2014.
\bibitem{jann} U. Jannsen {\it Continuous étale cohomology}, Math. Ann. {\bf 280} (1988), 207--245.
\bibitem{jann2} U. Jannsen Mixed motives and algebraic $K$-theory, Lect. Notes in Math. {\bf 1400}, Springer, 1990.
\bibitem{jannsen} U. Jannsen {\it Motives, numerical equivalence, and semi-simplicity}, Invent. Math. {\bf 107} (1992), 447--452.
\bibitem{jann3} U. Jannsen {\it Motivic sheaves and filtrations on Chow groups}, {\it in} Motives, Proc. Symp. pure Math. {\bf 55} (I), 245--302, AMS, 1994.
\bibitem{dJ} A. J. de Jong {\it Smoothness, semi-stability and alterations}, Publ. Math. IH\'ES {\bf 83} (1996), 51--93.
\bibitem{sheaf} B. Kahn {\it A sheaf-theoretic reformulation of the Tate conjecture}, preprint, 1998, \url{https://arxiv.org/abs/math/9801017}.
\bibitem{glr} B. Kahn {\it The Geisser-Levine method revisited and algebraic cycles over a finite field}, Math. Ann. {\bf 324} (2002), 581--617.
\bibitem{cell} B. Kahn {\it Équivalences rationnelle et numérique sur certaines variétés de type abélien sur un corps fini}, Ann. Sci. Ec. Norm. Sup. {\bf 36} (2003), 977--2002.
\bibitem{picfini} B. Kahn {\it Sur le groupe des classes d'un schéma arithmétique}, Bull. Soc. math. France
{\bf 134} (2006), 395--415.
\bibitem{ihes} B. Kahn {\it The full faithfulness conjectures in characteristic $p$}, {\it in} ``Autour des motifs'', École d'été franco-asiatique de géométrie algébrique et de théorie des nombres (IHÉS, juillet 2006), Vol. II, Panoramas et Synthèses {\bf 41}, SMF, 2014, 187--244.
\bibitem{cycletale} B. Kahn {\it Classes de cycles motiviques étales}, Alg. Numb. Theory {\bf 6-7} (2012), 1369--1407.
\bibitem{tatediv} B. Kahn {\it Sur la conjecture de Tate pour les diviseurs}, Ess. Numb. Theory {\bf 2} (2023), 83--92..
\bibitem{kmp} B. Kahn,  J.P. Murre, C. Pedrini {\it On the transcendental part of the motive of a surface}, {\it in} Algebraic cycles and motives (for J.P. Murre's 75th birthday), LMS Series {\bf 344} (2), Cambridge Univ. Press, 2007, 143--202.
\bibitem{BG} B. Kahn, T. K. N. Nguyen {\it Sur l’espace classifiant d’un groupe algébrique linéaire, I}, J. Math. Pures Appl. {\bf 102} (2014) 972--1013. 
\bibitem{ks} B. Kahn, R. Sujatha {\it Birational motives, I: pure birational motives}, Ann. $K$-theory {\bf 1} (2016), 379--440.
\bibitem{norm} N. A. Karpenko, A. S. Merkurjev {\it On standard norm varieties}, Ann. Sci. Éc. Norm. Sup. {\bf 46} (2013), 175--214.
\bibitem{KS} K. Kato, S. Saito {\it Unramified class field theory for arithmetic surfaces}, Ann. of Math. {\bf 118} (1983), 241--275.
\bibitem{kim} S.-I. Kimura {\it Chow groups are finite dimensional, in some sense}, Math. Ann. {\bf 331} (2005), 173--201.
\bibitem{levine} M. Levine {\it Techniques of localization in the theory of algebraic cycles}, 
J. Alg. Geom. {\bf 10} (2001), 299--363. 
\bibitem{licht} S. Lichtenbaum {\it The Weil-étale topology on schemes over finite fields}, Compos. Math. {\bf 141} (2005),  689--702.
\bibitem{milned} J. S. Milne Arithlmetic duality theorems, Second edition. BookSurge, LLC, Charleston, SC, 2006.
\bibitem{milne} J. S. Milne {\it Motives over finite fields}, {\it in} Motives (Seattle, WA, 1991), Proc. Sympos. Pure Math., {\bf 55}, Part 1  (1994), Amer. Math. Soc., Providence, RI, 401--459.
\bibitem{morin} B. Morin {\it Topological Hochschild homology and Zeta-values}, preprint, 2020, \url{https://arxiv.org/abs/2011.11549}.
\bibitem{ns} Yu. P. Nesterenko, A. A. Suslin {\it Homology of the full linear group over a local ring, and Milnor's $K$-theory}, Izv. Akad. Nauk SSSR {\bf 53} (1989); Engl. transl.: Math. USSR Izv. {\bf 34} (1990), 121--145. 
\bibitem{rost} M. Rost {\it Chow groups with coefficients}, Doc. Math. {\bf 1} (1996) 319--393.
\bibitem{saito} S. Saito {\it On the cycle map for torsion algebraic cycles of codimension two}, Invent. Math. {\bf 106} (1991), 443--460.
\bibitem{sc-suz} F. Scavia, F. Suzuki {\it Non-injectivity of the cycle class map in continuous $l$-adic cohomology}, Forum Math. Sigma {\bf 11}  (2023), e6, 19 pp.
\bibitem{soule} C. Soul\'e {\it Groupes de Chow et $K$-th\'eorie de vari\'et\'es sur un corps fini}, Math. Ann. {\bf 268} (1984), 317--345.
\bibitem{suslin-jou} A. Suslin and S. Joukhovitski {\it Norm varieties}, J. Pure Appl. Algebra {\bf 206} (2006), 245--276.
\bibitem{tate} J. Tate {\it Conjectures on algebraic cycles in $l$-adic cohomology}, {\it in} Motives, Proc. Symposia Pure Math. {\bf 55} (1), AMS, 1994, 71--83.
\bibitem{totaromilnor} B. Totaro {\it Milnor's $K$-theory is the simplest part of algebraic $K$-theory}, $K$-theory
\bibitem{totaro} B. Totaro {\it Torsion algebraic cycles and complex cobordism}, J. Amer. Math. Soc. {\bf 10} (1997), 467--493.
\bibitem{voetri} V. Voevodsky {\it Triangulated categories of motives over a field}, {\it in} Cycles, transfers and motivic cohomology theories, Ann. of Math. Studies {\bf 143}, 2000, 188--238.
\end{thebibliography}
\end{document}